\definecolor{pink}{HTML}{FF1493}
\newcommand{\Z}{\mathbb{Z}}
\newcommand{\N}{\mathbb{N}}
\newcommand{\Uqsl}{U_q(\mathfrak{sl}_2)}
\newcommand{\Uqb}{U_{q^2}(\mathfrak{b})}
\numberwithin{equation}{section}
\theoremstyle{plain}
\newtheorem{theo}[equation]{Theorem}
\newtheorem{prop}[equation]{Proposition}
\newtheorem{coro}[equation]{Corollary}
\newtheorem{lemm}[equation]{Lemma}
\theoremstyle{definition}
\newtheorem{defi}[equation]{Definition}
\newtheorem{exam}[equation]{Example}
\theoremstyle{remark}
\newtheorem{rema}[equation]{Remark}
\title[A dichotomy between twisted bialgebras and twisted {F}robenius algebras]{A dichotomy between twisted tensor products of bialgebras and {F}robenius algebras}
\author[P.\ S.\ Ocal]{Pablo S.\ Ocal}
\address{UCLA Mathematics Department, Los Angeles, CA 90095-1555, USA}
\email{socal@math.ucla.edu}
\author[A.\ Oswald]{Amrei Oswald}
\address{Department of Mathematics, University of Washington, Seattle, WA 98195, USA}
\email{amreio@uw.edu}
\date{November 2022}
\subjclass[2020]{16S10, 16S35, 16S80, 16L60, 18M05, 17B37}
\keywords{Twisted tensor product, bialgebra, Frobenius algebra, symmetric algebra, quantum complete intersection.}
\begin{document}


\begin{abstract}
We endow twisted tensor products with a natural notion of counit and comultiplication, and we provide sufficient and necessary conditions making the twisted tensor product a counital coassociative coalgebra. We then characterize when the twisted tensor product of bialgebras is a bialgebra, and when the twisted tensor product of Frobenius algebras is a Frobenius algebra. Our methods are purely diagrammatic, so these results hold for (braided) monoidal categories. As an application, we recover that some quantum complete intersections are Frobenius algebras, and we construct families of noncommutative symmetric Frobenius algebras. Along the way, we also characterize when twisted tensor products of separable algebras are separable, and we prove that twisted tensor products of special Frobenius algebras are special Frobenius.
\end{abstract}

\maketitle

\section{Introduction}\label{sec:introduction}

The study of quantum symmetries is intimately related with the deformation theory of classical objects. It is useful to understand how the properties of the classical objects are inherited, or not, by their deformations. For example, a key idea in the connection between Hopf algebras and solutions of the Yang-Baxter equation is the fact that the representation theory of a quantized universal enveloping algebra of a (complex semisimple) Lie algebra outside roots of unity coincides with the representation theory of the original Lie algebra \cite{MR934283, MR1359532}. This can be exploited further to understand Topological Quantum Field Theories, where both Frobenius algebras and Hopf algebras play significant roles. Namely commutative Frobenius algebras correspond to 2-dimensional TQFT's \cite{MR2695771, MR2037238}, and quantum groups can be used to construct 3-dimensional TQFT's \cite{MR1027945, MR3674995}.

In this paper we consider the deformation of the tensor product of algebras over a field known as twisted tensor product \cite{MR1352565}. These were originally conceived as a noncommutative analogue of the product of topological spaces, but they have an accessible algebraic formulation that encompass vast families of interesting algebras (such as the aforementioned quantized universal enveloping algebras). We seek to describe how the structures of Hopf algebra and Frobenius algebra are inherited by twisted tensor products, if at all. Related ideas have appeared in \cite{MR1755802,MR1926102}.

Our motivating goal is to understand the tensor triangular geometry of Hopf algebras. Given a Hopf algebra $H$, we would like to study the Balmer spectrum \cite{MR2196732} of its associated stable module category in terms of its Hopf subalgebras. In other words, we would like to find appropriate Hopf algebras $A$ and $B$ that are subalgebras of $H$ such that the information provided by $\mathrm{Spc}(\mathrm{stmod}{A})$ and $\mathrm{Spc}(\mathrm{stmod}{B})$ can be used to reconstruct $\mathrm{Spc}(\mathrm{stmod}{H})$. The twisted tensor product serves our purpose as a precise way of encoding how the algebra structure of $H$ is inherited by the algebra structures of $A$ and $B$. To extend this to the coalgebra structure of $H$, we introduce natural candidates for the counit and the comultiplication in terms of the coalgebra structures of $A$ and $B$. Since Frobenius structures also determine a compatibility between the algebra and coalgebra structures of a given vector space, it is natural to ask whether twisted tensor products could inherit a Frobenius algebra structure via the counit and comultiplication we introduce.

Techniques that rely on understanding a twisted tensor product $A\otimes_{\tau} B$ in terms of the algebras $A$ and $B$ have been successfully used on numerous occasions.  Results that can be proved using decompositions as twisted tensor products include the characterization of when the cohomology of quantum complete intersections is finitely generated over the Hochschild cohomology ring \cite{MR2450729}, the fact that Ore extensions preserve Artin-Schelter regularity \cite{MR2452318}, and that crossed products of the quantum plane with the quantized enveloping algebra of $\mathfrak{sl}_2$ admit PBW deformations at primitive third roots of unity \cite{MR3784828}.

Our first main result is that bialgebra structures are not inherited by twisted tensor products with our proposed counit and comultiplication, so Hopf algebra structures will not be inherited either. 

\begin{theo}[see Theorem~\ref{theo:bialgebra-twist-iff-cotwist}]
Let $A$ and $B$ be $k$-bialgebras, let $\tau:B\otimes A\rightarrow A\otimes B$ be a twisting map. Then $A\otimes_{\tau} B$ inherits a $k$-bialgebra structure if and only if $\tau$ is trivial.
\end{theo}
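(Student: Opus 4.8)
The plan is to treat the two implications separately; the forward one is routine and the converse carries all the content. For the ``if'' direction, when $\tau$ is trivial the algebra $A\otimes_{\tau}B$ is the ordinary tensor product of algebras, the comultiplication and counit introduced above collapse to the standard structure maps on $A\otimes B$, and one checks directly (or invokes the coalgebra criterion established earlier in the paper) that $A\otimes B$ is a bialgebra. So the work is entirely in the ``only if'' direction.

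For the converse, recall that $A\otimes_{\tau}B$ being a bialgebra means precisely that its counit $\varepsilon=\varepsilon_A\otimes\varepsilon_B$ and its comultiplication $\Delta$ are morphisms of algebras. I would first extract the weak constraint coming from the counit: evaluating multiplicativity of $\varepsilon$ on the mixed product $(1_A\otimes b)(a\otimes 1_B)=\tau(b\otimes a)$ and comparing with $\varepsilon(1_A\otimes b)\,\varepsilon(a\otimes 1_B)$ yields $(\varepsilon_A\otimes\varepsilon_B)\circ\tau=\varepsilon_B\otimes\varepsilon_A$. This is a genuine rigidity condition, but by itself it is far from triviality, and it is important to register that it is weak: bicrossed-product type twists satisfy it while being highly nontrivial. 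The decisive input must therefore come from comultiplicativity of $\Delta$.

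The heart of the argument is that all of the twisting is concentrated in the single mixed product $(1_A\otimes b)(a\otimes 1_B)=\tau(b\otimes a)$, since the opposite order $(a\otimes 1_B)(1_A\otimes b)=a\otimes b$ is untwisted by the unitality of $\tau$. I would compute $\Delta\bigl(\tau(b\otimes a)\bigr)$ in two ways: once as $\Delta(1_A\otimes b)\,\Delta(a\otimes 1_B)$, using that $\Delta$ is an algebra map and that $\Delta_A(1_A)=1_A\otimes 1_A$ and $\Delta_B(1_B)=1_B\otimes 1_B$ simplify the two factors; and once directly from the ($\tau$-dependent) comultiplication formula defined above. Expanding both in Sweedler notation, simplifying with the bialgebra axioms of $A$ and $B$ and the unit/counit normalizations of $\tau$, and then applying the counits $\id{A}\otimes\varepsilon_B\otimes\varepsilon_A\otimes\id{B}$ (and, in a parallel reduction, $\varepsilon_A\otimes\id{B}\otimes\id{A}\otimes\varepsilon_B$) to the resulting equality in $(A\otimes_{\tau}B)^{\otimes 2}$ collapses the Sweedler sums. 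The aim is to reduce the identity to $\tau(b\otimes a)=a\otimes b$ for all $a\in A$ and $b\in B$, that is, to conclude that $\tau$ equals the flip and hence is trivial.

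I expect the main obstacle to be exactly this last reduction. The bialgebra axiom for $A\otimes_{\tau}B$ unavoidably invokes the ambient braiding (the honest flip $\sigma$) to interchange the two tensor factors before remultiplying, whereas both the product and the comultiplication are $\tau$-twisted; proving triviality amounts to showing that these two competing symmetries can be reconciled only when $\tau=\sigma$. The delicate point is that the purely unit- and counit-level constraints are not enough to force this, as the bicrossed examples show, so one must feed the full twisting-map compatibilities (not merely unitality) into the comultiplicative identity and verify that the cross terms produced by the two occurrences of $\tau$ cancel against each other only in the trivial case. Getting the counit contractions to land precisely on $\tau(b\otimes a)$ versus $a\otimes b$, rather than on a strictly weaker coalgebra-morphism condition, is where the argument must be carried out with care.
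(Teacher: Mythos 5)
Your proposal takes essentially the same route as the paper: the paper's Lemma~\ref{lemm:cmA-cmB-tau-sigma} is precisely your two-way computation of $\Delta_{A\otimes_\tau B}$ on the mixed product $(1_A\otimes b)(a\otimes 1_B)=\tau(b\otimes a)$ and on the untwisted order $(a\otimes 1_B)(1_A\otimes b)=a\otimes b$, yielding $(1\otimes\tau\otimes 1)(\Delta_B\otimes\Delta_A)=\sigma_{23}(\Delta_B\otimes\Delta_A)$, and the final diagram in the paper's Theorem~\ref{theo:bialgebra-twist-iff-cotwist} is exactly your counit contraction (there, $\epsilon_B\otimes 1\otimes 1\otimes\epsilon_A$) collapsing this identity to $\tau=\sigma_{12}$. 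The last reduction you flag as delicate does go through without extra input: since $\tau$ is bijective by definition, the outer factor $\tau\otimes\tau$ cancels before contracting (alternatively, the separate counit compatibilities \eqref{diag:tau-counits} hold because a bialgebra structure makes $A\otimes_\tau B$ a counital coalgebra, cf.\ Proposition~\ref{prop:AtB-counital-coassociative}), so the contraction lands exactly on $\tau(b\otimes a)=a\otimes b$.
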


Our second main result is that Frobenius algebra structures are always inherited by twisted tensor products with our proposed counit and comultiplication.

\begin{theo}[see Theorem~\ref{theo:ttp-cttp-always-Frobenius}]
Let $A$ and $B$ be Frobenius algebras over $k$, let $\tau:B\otimes A\rightarrow A\otimes B$ be a twisting map. Then $A\otimes_{\tau} B$ is a Frobenius algebra if and only if $A\otimes_{\tau} B$ is a counital coassociative coalgebra.
\end{theo}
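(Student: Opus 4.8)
The plan is to treat the two implications separately, with all of the content in the reverse direction; throughout, $\varepsilon=\varepsilon_A\otimes\varepsilon_B$ and $\Delta$ denote the proposed counit and comultiplication. The forward implication is immediate: a Frobenius algebra carries, by definition, a counital coassociative coalgebra structure, so if $A\otimes_\tau B$ is Frobenius with respect to $(\mu,\eta,\Delta,\varepsilon)$, then $(\Delta,\varepsilon)$ is in particular a counital coassociative coalgebra. For the converse, write $F=A\otimes_\tau B$ and assume $(F,\Delta,\varepsilon)$ is a counital coassociative coalgebra. The monoid structure $(F,\mu,\eta)$ with $\mu=(\mu_A\otimes\mu_B)(\id{A}\otimes\tau\otimes\id{B})$ and $\eta=\eta_A\otimes\eta_B$ is already provided by the twisted tensor product, and the comonoid structure is exactly the hypothesis, so the only thing left to check is the Frobenius relation
\[
(\mu\otimes\id{F})(\id{F}\otimes\Delta)=\Delta\,\mu=(\id{F}\otimes\mu)(\Delta\otimes\id{F}).
\]

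First I would substitute the definitions of $\mu$ and $\Delta$ into each of the three composites, turning them into string diagrams on tensor powers of $A$ and $B$ whose only crossings are copies of $\tau$ together with the crossing used to build $\Delta$. The idea is then to resolve these diagrams one tensor factor at a time: applying the Frobenius relation for $A$ to the $A$-strands and the Frobenius relation for $B$ to the $B$-strands slides each multiplication past the neighbouring comultiplication within each factor, while the twisting axioms for $\tau$ transport the crossings through the (co)multiplications. If the proposed comultiplication is the top-to-bottom reflection of $\mu$ through the Frobenius pairings of $A$ and $B$, then reflecting the diagrams interchanges the two outer expressions and fixes $\Delta\,\mu$, so it suffices to prove a single one of the equalities. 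What should remain after these reductions is a comparison of how the crossings are routed on the two sides, and this residual identity is exactly the compatibility between $\tau$ and $\Delta$ recorded by coassociativity and counitality---that is, by the hypothesis, in the explicit form supplied by the earlier characterization of when $A\otimes_\tau B$ is a coalgebra.

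The step I expect to be the main obstacle is precisely this crossing bookkeeping. After the componentwise Frobenius relations have been applied, the two sides of the relation should differ only in the order in which a single $\tau$-strand traverses the diagram, and confirming that these two routings agree requires invoking the twisting axioms and the coassociativity-type conditions in the correct sequence; the unit normalizations $\tau(\id{B}\otimes\eta_A)=\eta_A\otimes\id{B}$ and $\tau(\eta_B\otimes\id{A})=\id{A}\otimes\eta_B$, together with their counit duals, should absorb the boundary crossings while coassociativity handles the interior one. Once the Frobenius relation is verified, $(F,\mu,\eta,\Delta,\varepsilon)$ satisfies all the axioms of a Frobenius algebra---in particular the pairing $\varepsilon\,\mu$ is automatically nondegenerate, with copairing $\Delta\,\eta$---which completes the converse and hence the proof.
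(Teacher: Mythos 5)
Your proposal is correct and follows essentially the same route as the paper: the forward direction is definitional, and for the converse the paper likewise converts the coalgebra hypothesis, via Proposition~\ref{prop:AtB-counital-coassociative}, into the compatibility of $\tau$ with $\Delta_A$ and $\Delta_B$ (diagram~\eqref{diag:tau-comultiplications}, equivalently \eqref{diag:tau-commutes-cmA} and \eqref{diag:tau-commutes-cmB}), and then carries out exactly the crossing bookkeeping you describe in two large commutative diagrams combining the componentwise Frobenius relations with Lemma~\ref{lemm:twist-multiplication-compatibility}. Your reflection-symmetry reduction to a single Frobenius identity corresponds to the paper's ``the remaining one follows analogously,'' and your closing remark on the pairing $\epsilon_{A\otimes_\tau B}\nabla_{A\otimes_\tau B}$ and co-pairing $\Delta_{A\otimes_\tau B}\eta_{A\otimes_\tau B}$ matches Remark~\ref{rema:frob-alg-p-c-ring} and Proposition~\ref{prop:ttp-pairing-copairing}.
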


Moreover, we determine when twisted tensor products of separable and special Frobenius algebras are separable or special, respectively. We also recover the fact that certain quantum complete intersections are symmetric Frobenius algebras.

\begin{coro}[see Corollaries~\ref{coro:quantum-complete-Frobenius} and~\ref{coro:p-group-Frobenius}, and Lemma 3.1 in \cite{MR2516162}]
Let $n\in\N$, $\mathbf{m} = (m_1,\dots,m_n)\in\N^n$, $n,m_1,\dots,m_n \geq 2$, and $\mathbf{q} = (q_{ij}) \in M_{n}(k^{\times})$ such that $q_{ii} = 1$ and $q_{ij} q_{ji} = 1$ for all $1\leq i,j\leq n$. If
\begin{enumerate}
\item $q_{ij}$ is a root of unity whose order divides $\gcd(m_i - 1, m_j - 1)$ for all $i,j = 1,\dots,n$, or

\item $k$ is a field of characteristic $p > 0$ and $\mathbf{m} = (p,\dots,p)$,
\end{enumerate}
then the quantum complete intersections $\Lambda_{\mathbf{q},\mathbf{m}}^{n}$ are symmetric Frobenius algebras.
\end{coro}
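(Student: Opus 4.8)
The plan is to realize $\Lambda_{\mathbf q,\mathbf m}^n$ as an iterated twisted tensor product of truncated polynomial algebras and then invoke Theorem~\ref{theo:ttp-cttp-always-Frobenius}. Writing $A_i := k[x_i]/(x_i^{m_i})$, the defining relations $x_jx_i = q_{ij}x_ix_j$ exhibit $\Lambda_{\mathbf q,\mathbf m}^n$ as $A_1\otimes_{\tau_1}\cdots\otimes_{\tau_{n-1}}A_n$, where each twisting map is the $q$-flip $\tau(x_j\otimes x_i)=q_{ij}\,x_i\otimes x_j$ extended multiplicatively so that $\tau(x_j^b\otimes x_i^a)=q_{ij}^{ab}\,x_i^a\otimes x_j^b$ (the standard decomposition used, e.g., in \cite{MR2450729}). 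By associativity of the twisted tensor product it suffices to treat $n=2$ and induct, so I would fix $A=k[x]/(x^m)$ and $B=k[y]/(y^{m'})$ with twist $\tau(y\otimes x)=q\,x\otimes y$. Each $A_i$ is a commutative, hence symmetric, Frobenius algebra, with counit $\varepsilon_i(x_i^s)=\delta_{s,m_i-1}$ picking out the top-degree coefficient and comultiplication $\Delta_i(x_i^s)=\sum_{a+b=s+m_i-1}x_i^a\otimes x_i^b$ dual to multiplication.

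By Theorem~\ref{theo:ttp-cttp-always-Frobenius} it then suffices to show $A\otimes_\tau B$ is a counital coassociative coalgebra for the induced structure, and I would verify these axioms directly. The natural comultiplication is $\Delta=(\mathrm{id}\otimes\sigma\otimes\mathrm{id})(\Delta_A\otimes\Delta_B)$ with cotwist the inverse $q$-flip $\sigma(x^a\otimes y^b)=q^{-ab}\,y^b\otimes x^a$, together with the counit $\varepsilon=\varepsilon_A\otimes\varepsilon_B$. The key computation is counitality: expanding $(\varepsilon\otimes\mathrm{id})\Delta(x^a\otimes y^b)$ collapses the first copy of $A\otimes_\tau B$ onto its top monomial, leaving the single scalar $q^{-a(m'-1)}$ times $x^a\otimes y^b$, while $(\mathrm{id}\otimes\varepsilon)\Delta$ leaves $q^{-(m-1)b}$. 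Hence the counit axioms hold for all $a,b$ precisely when $q^{m-1}=q^{m'-1}=1$; coassociativity unwinds analogously, since $\sigma$ is itself a (co)twisting map of the same $q$-flip type, and reduces to the same vanishing of boundary scalars. Under hypothesis (1), $\mathrm{ord}(q_{ij})\mid\gcd(m_i-1,m_j-1)$ gives exactly $q_{ij}^{m_i-1}=q_{ij}^{m_j-1}=1$ for every pair, so $\Lambda_{\mathbf q,\mathbf m}^n$ is a coalgebra and therefore, by Theorem~\ref{theo:ttp-cttp-always-Frobenius}, a Frobenius algebra.

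It remains to upgrade ``Frobenius'' to ``symmetric''. I would check that the associated form $\langle u,v\rangle=(\varepsilon_A\otimes\varepsilon_B)(uv)$ is symmetric, which need only be tested on monomials: only complementary pairs contribute, and for $u=x_1^{a_1}\cdots x_n^{a_n}$, $v=x_1^{b_1}\cdots x_n^{b_n}$ with $a_i+b_i=m_i-1$ one finds $\langle u,v\rangle/\langle v,u\rangle=\prod_{i<j}q_{ij}^{\,a_j(m_i-1)-a_i(m_j-1)}$. Hypothesis (1) forces each factor to be $1$, so the form is symmetric and $\Lambda_{\mathbf q,\mathbf m}^n$ is a symmetric Frobenius algebra.

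For hypothesis (2) the top-degree form is no longer invariant under the $q$-flip, so the diagrammatic argument does not apply directly; I would instead treat this modular case by appealing to Lemma~3.1 of \cite{MR2516162}, which supplies the symmetric structure when $\mathrm{char}\,k=p$ and $\mathbf m=(p,\dots,p)$, together with the identification $k[x]/(x^p)\cong k[\Z/p]$ that makes each factor a group algebra, hence symmetric, in this characteristic. The main obstacle I anticipate is the second step: correctly pinning down the cotwist $\sigma$ and confirming that coassociativity, and not merely counitality, reduces to the single arithmetic condition $q_{ij}^{m_i-1}=q_{ij}^{m_j-1}=1$, so that hypothesis (1) is precisely what Theorem~\ref{theo:ttp-cttp-always-Frobenius} demands; by comparison the symmetric upgrade and the modular case (2) are comparatively routine once that compatibility is in hand.
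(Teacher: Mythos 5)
Your treatment of hypothesis (1) is correct and follows essentially the paper's own route: the same decomposition of $\Lambda_{\mathbf{q},\mathbf{m}}^{n}$ as an iterated bicharacter twist of the truncated polynomial rings $k[x_i]/(x_i^{m_i})$ with the Frobenius structure of Example~\ref{exam:Frobenius}, the same reduction via Theorem~\ref{theo:ttp-cttp-always-Frobenius} to checking the induced coalgebra axioms, and the same scalar condition. Where the paper verifies compatibility of $\tau$ with the comultiplications (diagrams \eqref{diag:tau-commutes-cmA} and \eqref{diag:tau-commutes-cmB}, via Lemma~\ref{lemm:bicharacter-graded-coproduct-d}, Remark~\ref{rema:twisting-bicharacter-generated-1}, and Proposition~\ref{prop:AtB-counital-coassociative}), you verify counitality and coassociativity of $\Delta_{A\otimes_\tau B}$ directly on monomials; these are the same computation, and your conclusion $q^{m-1}=q^{m'-1}=1$, i.e.\ $\mathrm{ord}(q_{ij})\mid\gcd(m_i-1,m_j-1)$, is exactly the paper's condition. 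Your symmetry check on complementary monomials, with ratio $\prod_{i<j}q_{ij}^{a_j(m_i-1)-a_i(m_j-1)}$, reproduces the paper's computation of $\beta_{\Lambda_{\mathbf{q},\mathbf{m}}^{n}}$ following Corollary~\ref{coro:quantum-complete-Frobenius}.

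The genuine gap is case (2). The paper's mechanism there (Corollaries~\ref{coro:twisting-bicharacter-Frobenius-0} and \ref{coro:p-group-Frobenius}) is not to cite \cite{MR2516162} but to \emph{change the Frobenius coalgebra structure on each factor}: in characteristic $p$ one has $k[x_i]/(x_i^p)\cong kC_p$, and one equips each factor with the group-algebra Frobenius structure of Example~\ref{exam:Frobenius}, whose comultiplication is graded of degree $0$; the degree conditions of Lemma~\ref{lemm:bicharacter-graded-coproduct-d} then become vacuous, so diagram \eqref{diag:tau-comultiplications} commutes for \emph{arbitrary} $q_{ij}$ and Theorem~\ref{theo:ttp-cttp-always-Frobenius} applies with no root-of-unity hypothesis. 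You miss this idea entirely; you correctly observe that the top-degree form is no longer compatible with the twist, but your replacement argument does not work. First, ``each factor is a group algebra, hence symmetric'' is a non sequitur: symmetry of the tensor factors never transfers through a twist --- in case (1) the factors are commutative, hence symmetric, yet symmetry of $\Lambda_{\mathbf{q},\mathbf{m}}^{n}$ required the divisibility computation. Second, Lemma 3.1 of \cite{MR2516162} does not ``supply the symmetric structure'' in case (2): by the paper's own discussion after Corollary~\ref{coro:quantum-complete-Frobenius}, the Nakayama automorphism of that Frobenius structure is the identity precisely under the divisibility condition of case (1), which is \emph{not} assumed in case (2) (there $m_i-1=p-1$ and the $q_{ij}$ are unconstrained), so the cited lemma yields only the Frobenius property, not symmetry. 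Note also that the body of the paper itself proves only ``Frobenius'' in Corollary~\ref{coro:p-group-Frobenius}; so if you want the symmetry assertion of case (2) you must actually verify a Nakayama-type condition rather than assert it, and in any event your proposal as written establishes neither the Frobenius nor the symmetric claim in the modular case.
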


Finally, we construct noncommutative symmetric Frobenius algebras from finite groups.

\begin{coro}[see Section~\ref{sec:novel-Frobenius}]
Let $G$ and $H$ be finite groups, let $\tau : kH\otimes kG \to kG\otimes kH$ be a non-trivial strongly graded twisting map and denote $\tau(h\otimes g) = \lambda_{h,g} g\otimes h$ for some $\lambda_{h,g} \in k^{\times}$. If $\lambda_{h,g}=\lambda_{s^{-1},g}\lambda_{sh,g}$ and $\lambda_{h,g}=\lambda_{h,r^{-1}}\lambda_{h,rg}$ for all $g,r\in G$ and $h,s\in H$ then $kG\otimes_{\tau} kH$ is a noncommutative symmetric Frobenius algebra.
\end{coro}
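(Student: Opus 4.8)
The plan is to deduce everything from Theorem~\ref{theo:ttp-cttp-always-Frobenius} together with two explicit computations. Recall first that for any finite group $G$ the group algebra $kG$ is a symmetric Frobenius algebra: its Frobenius form is $\epsilon_G(g) = \delta_{g,e}$, the induced pairing $\langle g, g'\rangle = \epsilon_G(gg') = \delta_{g', g^{-1}}$ is nondegenerate and symmetric, and the associated Frobenius comultiplication is $\Delta_G(g) = \sum_{x \in G} x \otimes x^{-1}g$; the same holds for $H$. Thus $kG$ and $kH$ satisfy the hypotheses of Theorem~\ref{theo:ttp-cttp-always-Frobenius}, and it suffices to show that $kG \otimes_\tau kH$, equipped with the counit $\epsilon_G \otimes \epsilon_H$ and the proposed comultiplication built from $\Delta_G$ and $\Delta_H$, is a counital coassociative coalgebra; the theorem then upgrades this to a Frobenius structure.

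For that verification I would substitute $\tau(h \otimes g) = \lambda_{hg}\, g \otimes h$ into our conditions for $kG \otimes_\tau kH$ to be a counital coassociative coalgebra. The unit axioms of a twisting map give the normalizations $\lambda_{1 g} = \lambda_{h 1} = 1$, and together with the hypothesis $\lambda_{h g^{-1}} = \lambda_{hg}^{-1}$ the abstract compatibility conditions between $\tau$ and the comultiplications $\Delta_G, \Delta_H$ should collapse to scalar identities that hold identically. I expect this bookkeeping to be the main obstacle: one must track how the sums $\sum_{x} x \otimes x^{-1}g$ interact with the twist inside the coassociativity diagram and confirm that the scalars produced on each strand cancel exactly under the imposed relations on $\lambda$.

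Granting this, Theorem~\ref{theo:ttp-cttp-always-Frobenius} yields that $kG \otimes_\tau kH$ is Frobenius with form $\varepsilon = \epsilon_G \otimes \epsilon_H$. To see that it is \emph{symmetric} I would compute the form on a product directly. From the twisted multiplication one gets $(g \otimes h)(g' \otimes h') = \lambda_{hg'}\, gg' \otimes hh'$, whence $\varepsilon\big((g \otimes h)(g' \otimes h')\big) = \lambda_{hg'}\,\delta_{g', g^{-1}}\,\delta_{h', h^{-1}}$; this is nonzero only when $g' = g^{-1}$ and $h' = h^{-1}$, in which case it equals $\lambda_{h g^{-1}}$. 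Computing the opposite product the same way yields $\lambda_{h^{-1} g}$ under the same support condition, so $\varepsilon(ab) = \varepsilon(ba)$ on all of $kG \otimes_\tau kH$ precisely because the hypothesis grants $\lambda_{h g^{-1}} = \lambda_{h^{-1} g}$. Hence the Frobenius algebra is symmetric, and the two equalities in the hypothesis separate cleanly: the inverse relation feeds coassociativity, while the reflection relation is exactly symmetry of the form.

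Finally, the product formula $(g \otimes h)(g' \otimes h') = \lambda_{hg'}\, gg' \otimes hh'$ exhibits noncommutativity as soon as the data is nontrivial: taking $G$ or $H$ nonabelian, or $\lambda$ a nontrivial bicharacter for which $\lambda_{hg'} \neq \lambda_{h'g}$ even when $gg' = g'g$, produces pairs of elements that fail to commute. This shows the construction genuinely yields noncommutative symmetric Frobenius algebras, as claimed.
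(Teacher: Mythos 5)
Your proposal follows the same route as the paper: verify that the hypothesis makes diagram~\eqref{diag:tau-comultiplications} commute, invoke Theorem~\ref{theo:ttp-cttp-always-Frobenius} (through Proposition~\ref{prop:AtB-counital-coassociative}) to get the Frobenius structure with counit $\epsilon_G\otimes\epsilon_H$, and check symmetry by evaluating the form on products --- your computation $\epsilon\bigl((g\otimes h)(g'\otimes h')\bigr)=\lambda_{hg'}\,\delta_{g',g^{-1}}\,\delta_{h',h^{-1}}$, giving $\lambda_{hg^{-1}}$ versus $\lambda_{h^{-1}g}$ for the opposite order, is precisely the paper's verification that $\beta_{kG\otimes_{\tau}kH}$ is invariant under the flip, and your treatment of noncommutativity as generic matches the paper's (see Example~\ref{exam:twist-symmetric-Frobenius}). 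Two caveats on the part you defer. First, the ``bookkeeping'' you postpone is the actual content of the paper's proof, and the unit normalizations $\lambda_{1g}=\lambda_{h1}=1$ alone do not make it collapse: you also need that $\lambda$ is multiplicative in each slot, $\lambda_{(ss')g}=\lambda_{sg}\lambda_{s'g}$ and $\lambda_{s(gg')}=\lambda_{sg}\lambda_{sg'}$, which is automatic from the twisting-map axioms (Lemma~\ref{lemm:twist-multiplication-compatibility} applied to the strongly graded $\tau$) and is used implicitly in the paper's displayed computations; with it, diagrams~\eqref{diag:tau-commutes-cmA} and \eqref{diag:tau-commutes-cmB} reduce to the scalar identities $\lambda_{hr}\lambda_{hr^{-1}}=1$ and $\lambda_{sg}\lambda_{s^{-1}g}=1$ respectively, which is exactly how the paper concludes. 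Second, your closing claim that the two equalities in the hypothesis ``separate cleanly'' (inverse relation for coassociativity, reflection for symmetry) misstates the logic: the coalgebra compatibility consumes \emph{both} inverse relations $\lambda_{hg^{-1}}=\lambda_{hg}^{-1}$ and $\lambda_{h^{-1}g}=\lambda_{hg}^{-1}$, one for each of \eqref{diag:tau-commutes-cmA} and \eqref{diag:tau-commutes-cmB}, and the reflection identity $\lambda_{hg^{-1}}=\lambda_{h^{-1}g}$ that symmetry requires is their composite rather than an independent input. Both points are fixable in a few lines, so the proposal is correct in substance; your convention $\Delta_G(g)=\sum_{x}x\otimes x^{-1}g$, which agrees with the bimodule-morphism form used in Example~\ref{exam:twist-not-cotwist-SG}, changes nothing in the computation.
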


We observe that our approach and techniques have potential implications for the study of twisted Segre products \cite{MR4478354} in noncommutative geometry. More precisely, twisted Segre products appear as a subalgebra of twisted tensor products when the twist is strongly graded, in which case diagrammatic proofs automatically carry over. In general, it would be interesting to know when the twisted tensor product of Noetherian algebras is Noetherian.

Throughout this paper, we actively emphasize the use of commutative diagrams. We intentionally give all definitions in terms of maps satisfying certain commutative diagrams, interpret all conditions and properties in terms of commutative diagrams involving maps, and the majority of our proofs rely on the commutativity of the necessary diagrams. Not only is this a remarkably efficient way of working with twisted tensor products, but it also guarantees that all results in Sections~\ref{sec:preliminaries},~\ref{sec:ttp-cttp},~\ref{sec:ttp-bialgebras}, and~\ref{sec:ttp-Frobenius} hold at the level of Hopf objects and Frobenius objects in a (braided) monoidal category $(\mathcal{C},\otimes,1)$. In particular, we extend some of the results in \cite[Section 3.2]{MR2187404}.

\subsection*{Outline}

In Section~\ref{sec:preliminaries}, we establish the definitions, characterizations, and examples of Hopf algebras and Frobenius algebras to be used throughout the paper. In Section~\ref{sec:ttp-cttp}, we recall the definition of twisted tensor products of algebras, introduce natural candidates for counit and comultiplication on twisted tensor products of coalgebras, and establish useful computational tools. In Section~\ref{sec:ttp-bialgebras}, we prove that non-trivial twisted tensor products of bialgebras do not inherit a bialgebra structure. In Section~\ref{sec:ttp-Frobenius}, we characterize when twisted tensor products of Frobenius algebras inherit a Frobenius algebra structure, and we determine the induced pairing and co-pairing. In Section~\ref{sec:novel-Frobenius}, we recover known Frobenius algebra structures on quantum complete intersections, and we provide novel families of noncommutative symmetric Frobenius algebras. Numerous illustrative examples and counterexamples are provided throughout the paper.

\subsection*{Notation}

The following notational conventions will be used in this paper. We will denote by $k$ a fixed field, of arbitrary characteristic unless otherwise stated. Unadorned tensor products are taken to be over $k$, namely $\otimes \coloneqq \otimes_{k}$. The identity morphism will be denoted by $1:V\to V$ for all $k$ vector spaces $V$. The map $\sigma_{ij}:\bigotimes_{l=1}^{m}{V_l} \rightarrow \bigotimes_{l=1}^{i-1}{V_l}\otimes V_j \otimes \bigotimes_{l=i+1}^{j-1}{V_l}\otimes V_i \otimes \bigotimes_{l=j+1}^{m}{V_l}$ denotes the exchange of the $i$-th and $j$-th coordinates. Unless otherwise stated, vector spaces in Section~\ref{sec:novel-Frobenius} will be graded by additive abelian groups, and maps $f:V\to W$ between graded $k$ vector spaces are assumed to be homogeneous of some degree $d\in\Z$. Namely, if $f(v) = \sum_{i\in I}{w_i}$ for some indexing set $I$, $v\in V$, and $w_i\in W$ for all $i\in I$, then $|v| + d = |w_i|$ for all $i\in I$. The Kronecker delta will be denoted by $\delta_{i,j}$. We will use Sweedler's notation for the comultiplication. Finally, exclusively within diagrams, we omit tensor products to economize space and composition is denoted by $\circ$. Outside commutative diagrams, composition is denoted by concatenation. Namely given $k$ vector spaces $X,Y,U,V$ and $k$-linear morphisms $f:X\to Y$, $g:U\to V$, $h:Y\to U$, then within commutative diagrams $UV \coloneqq U\otimes V$ and $fg \coloneqq f\otimes g$. We use \fbox{?} to indicate a diagram that has not yet been shown to commute.

\section{Preliminaries}\label{sec:preliminaries}

In this section, we introduce the definitions and background necessary to understand the majority of the statements and results. These are presented diagrammatically to facilitate the proofs in the following sections. We refer the reader to \cite{MR1653294, MR2037238, MR2894855} for the details.

\begin{defi}
An associative, unital $k$-\emph{algebra} is a triple $(A,\nabla,\eta)$ where $A$ is a $k$ vector space, and $\nabla: A\otimes A \to A$ and $\eta: k \to A$ are $k$-linear maps making the following diagrams commute.

\begin{equation*}
\begin{tikzcd}
A\otimes k \arrow{r}{1\otimes\eta} \arrow[swap]{dr}{\cong} & A\otimes A \arrow{d}{\nabla}\\
 & A
\end{tikzcd} \quad
\begin{tikzcd}
k\otimes A \arrow{r}{\eta\otimes 1} \arrow[swap]{dr}{\cong} & A\otimes A \arrow{d}{\nabla}\\
 & A
\end{tikzcd} \quad
\begin{tikzcd}
A\otimes A\otimes A \arrow{r}{\nabla\otimes 1}  \arrow[swap]{d}{1\otimes\nabla} & A\otimes A \arrow{d}{\nabla}\\
A\otimes A \arrow{r}{\nabla} & A
\end{tikzcd}
\end{equation*}
\end{defi}

\begin{defi}
A $k$-algebra $(A,\nabla,\eta)$ is \emph{separable} when the $A$ bimodule morphism $\nabla : A\otimes A\to A$ has a right inverse. That is, there is an $k$-linear map $\Gamma : A \to A\otimes A$ making the following diagrams commute.
\begin{center}
\begin{tikzcd}
A \arrow{r}{\Gamma} \arrow[swap]{rd}{1} & A\otimes A \arrow{d}{\nabla} \\
& A
\end{tikzcd}\quad \begin{tikzcd}
A \otimes A \otimes A \arrow[swap]{d}{1 \otimes \Gamma \otimes 1} \arrow{r}{1 \otimes \nabla} & A\otimes A \arrow{r}{\nabla} & A \arrow{d}{\Gamma}\\
A \otimes A\otimes A\otimes A \arrow{rr}{\nabla \otimes \nabla} & & A\otimes A
\end{tikzcd}
\end{center}
\end{defi}

The right diagram above states that $\Gamma : A \to A\otimes A$ is an $A$ bimodule morphism.

\begin{defi}
A coassociative, counital $k$-\emph{coalgebra} is a triple $(C,\Delta,\epsilon)$ where $C$ is a $k$ vector space, and $\Delta: C \to C\otimes C$ and $\epsilon: C \to k$ are $k$-linear maps making the following diagrams commute.
\begin{equation*}
\begin{tikzcd}
A \arrow{r}{\Delta} \arrow[swap]{dr}{\cong}& A\otimes A\arrow{d}{1\otimes\epsilon}\\
 & A\otimes k
\end{tikzcd} \quad
\begin{tikzcd}
A \arrow{r}{\Delta} \arrow[swap]{dr}{\cong}& A\otimes A\arrow{d}{\epsilon\otimes 1}\\
 & k\otimes A
\end{tikzcd} \quad
\begin{tikzcd}
A \arrow{r}{\Delta} \arrow[swap]{d}{\Delta} & A\otimes A \arrow{d}{1\otimes\Delta}\\
A\otimes A \arrow{r}{\Delta\otimes 1} & A\otimes A\otimes A
\end{tikzcd}
\end{equation*}
\end{defi}

\begin{defi}\label{defi:bialgebra}
A $k$-\emph{bialgebra} is a tuple $(A,\nabla,\eta,\Delta,\epsilon)$ where $(A,\nabla,\eta)$ is a $k$-algebra, $(A,\Delta,\epsilon)$ is a $k$-coalgebra, and the following diagrams commute.
\begin{equation*}
\begin{tikzcd}
A\otimes A \arrow{r}{\nabla} \arrow[swap]{d}{\epsilon \otimes \epsilon} & A \arrow{d}{\epsilon}\\
k\otimes k \arrow{r}{\cong} & k
\end{tikzcd}\quad
\begin{tikzcd}
k \arrow{r}{\cong} \arrow[swap]{d}{\eta} & k\otimes k \arrow{d}{\eta\otimes \eta}\\
A \arrow{r}{\Delta} & A\otimes A
\end{tikzcd}
\end{equation*}
\begin{equation*}
\begin{tikzcd}
k \arrow{r}{\eta} \arrow[swap]{dr}{1} & A \arrow{d}{\epsilon}\\
 & k
\end{tikzcd}\quad
\begin{tikzcd}
A\otimes A \arrow{r}{\nabla} \arrow[swap]{d}{\Delta\otimes\Delta} & A \arrow{r}{\Delta} & A\otimes A\\
A\otimes A\otimes A\otimes A \arrow{rr}{\sigma_{23}} & & A\otimes A\otimes A\otimes A \arrow[swap]{u}{\nabla\otimes \nabla}
\end{tikzcd}
\end{equation*}
\end{defi}

The examples in this paper will mostly use the following two bialgebra structures.

\begin{exam}\label{exam:bialgebra}
Let $G$ be a finite group, the group algebra $kG$ with the usual unit and multiplication, and counit $\epsilon:kG\to k$ and comultiplication $\Delta:kG\to kG\otimes kG$ given by extending $\epsilon(g) = 1$ and $\Delta(g) = g\otimes g$ for all $g\in G$ is a bialgebra. The polynomial ring in one variable $k[x]$ with the usual unit and multiplication, and counit $\epsilon:k[x]\to k$ and comultiplication $\Delta:k[x]\to k[x]\otimes k[x]$ given by extending $\epsilon(1) = 1$, $\epsilon(x) = 0$, and $\Delta(x) = 1\otimes x + x\otimes 1$ is a bialgebra.
\end{exam}

\begin{defi}\label{defi:frob-alg-comult}
A \emph{Frobenius algebra} over $k$ is a tuple $(A,\nabla,\eta,\Delta,\epsilon)$ where $(A,\nabla,\eta)$ is a $k$-algebra, $(A,\Delta,\epsilon)$ is a $k$-coalgebra, and the following diagrams commute.
\begin{equation}\label{eq:frob-condition}
\begin{tikzcd}
A\otimes A \arrow{r}{\nabla} \arrow[swap]{d}{\Delta\otimes 1} & A \arrow{d}{\Delta}\\
A\otimes A\otimes A \arrow{r}{1\otimes\nabla} & A\otimes A
\end{tikzcd}\quad
\begin{tikzcd}
A\otimes A \arrow{r}{\nabla} \arrow[swap]{d}{1\otimes\Delta} & A \arrow{d}{\Delta}\\
A\otimes A\otimes A \arrow{r}{\nabla\otimes 1} & A\otimes A
\end{tikzcd}
\end{equation} 
\end{defi}

The examples in this paper will mostly use the following two Frobenius algebra structures.

\begin{exam}\label{exam:Frobenius}
Let $G$ be a finite group, the group algebra $kG$ with the usual unit and multiplication, and counit $\epsilon:kG\to k$ and comultiplication $\Delta:kG\to kG\otimes kG$ given by extending
\begin{equation*}
\epsilon(g) = \delta_{g,1},\quad \Delta(g) = \sum_{r\in G}{rg\otimes r^{-1}}
\end{equation*}
is a Frobenius algebra. Let $n\in\N$, the truncated polynomial ring in one variable $k[x]/(x^n)$ with the usual unit and multiplication, and counit $\epsilon:k[x]/(x^n)\to k$ and comultiplication $\Delta:k[x]/(x^n)\to k[x]/(x^n)\otimes k[x]/(x^n)$ given by extending
\begin{equation*}
\epsilon(x^i) = \delta_{i,n-1},\quad \Delta(p(x)) = \sum_{j=0}^{n-1}{x^j p(x) \otimes x^{n-1-j}}
\end{equation*}
for all $i=1,\dots,n-1$ and all $p(x)\in k[x]/(x^n)$ is a Frobenius algebra.
\end{exam}

The characterization of Frobenius algebras in Definition~\ref{defi:frob-alg-comult} favors our approach seeking to emphasize the use of commutative diagrams. It is well known that there are many equivalent definitions, see for example \cite[Chapter 6]{MR1653294} for an extensive review. Another predominant viewpoint in the literature of noncommutative algebra follows, see for example \cite[Section 2.1]{MR2695771}.

\begin{defi}
Let $(A,\nabla,\eta)$ be an algebra. A \emph{pairing} is a $k$-linear map $\beta: A\otimes A \to k$. A \emph{co-pairing} is a $k$-linear map $\alpha: k \to A\otimes A$. A pairing $\beta$ is said to be \emph{non-degenerate} when there exists a co-pairing $\alpha$ such that the following diagram commutes.
\begin{equation}\label{eq:non-degen}
\begin{tikzcd}
k\otimes A \arrow{r}{\alpha\otimes 1} \arrow[swap]{d}{\cong} & A\otimes A\otimes A \arrow{r}{1\otimes\beta} & A\otimes k \arrow{d}{\cong}\\
A \arrow{rr}{1} & & A
\end{tikzcd}
\end{equation}
A pairing $\beta$ is said to be \emph{associative} when the following diagram commutes.
\begin{equation}\label{eq:assoc}
\begin{tikzcd}
A\otimes A \otimes A \arrow{r}{1\otimes \nabla} \arrow[swap]{d}{\nabla\otimes 1} & A \otimes A \arrow{d}{\beta}\\
A\otimes A\arrow{r}{\beta} & k
\end{tikzcd}
\end{equation}
\end{defi}

\begin{prop}\label{prop:frob-alg-pairing}
A $k$-algebra $(A,\nabla,\eta)$ is a Frobenius algebra if and only if there exists an associative non-degenerate pairing $\beta: A\otimes A \to k$.
\end{prop}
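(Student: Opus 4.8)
The plan is to prove both implications by exhibiting the relevant structure explicitly and checking the defining diagrams. For the forward implication, suppose $(A,\nabla,\eta,\Delta,\epsilon)$ is a Frobenius algebra and set $\beta \coloneqq \epsilon\nabla$ and $\alpha \coloneqq \Delta\eta$. Associativity of $\beta$, i.e.\ diagram~\eqref{eq:assoc}, is immediate from associativity of $\nabla$, since $\beta(\nabla\otimes 1) = \epsilon\nabla(\nabla\otimes 1) = \epsilon\nabla(1\otimes\nabla) = \beta(1\otimes\nabla)$. Non-degeneracy, i.e.\ diagram~\eqref{eq:non-degen}, I would verify as a short diagram chase: expanding $(1\otimes\beta)(\alpha\otimes 1) = (1\otimes\epsilon)(1\otimes\nabla)(\Delta\otimes 1)(\eta\otimes 1)$, the left-hand Frobenius square in \eqref{eq:frob-condition} rewrites $(1\otimes\nabla)(\Delta\otimes 1)$ as $\Delta\nabla$, and then the unit and counit axioms collapse the composite to the identity (modulo the canonical isomorphisms $k\otimes A\cong A\cong A\otimes k$). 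Thus $\beta$ is an associative non-degenerate pairing.

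For the converse, suppose $\beta$ is associative and non-degenerate with co-pairing $\alpha$, and write $\alpha(1) = \sum_i e_i\otimes f_i$. I would define $\epsilon \coloneqq \beta(1\otimes\eta)$ and $\Delta \coloneqq (\nabla\otimes 1)(1\otimes\alpha)$, so that $\Delta(a) = \sum_i ae_i\otimes f_i$. Associativity~\eqref{eq:assoc} then gives $\beta = \epsilon\nabla$ at once (it says $\beta(ab\otimes 1) = \beta(a\otimes b)$), keeping this construction compatible with the forward one. The right-hand Frobenius square in \eqref{eq:frob-condition} holds by construction, since both $(\nabla\otimes 1)(1\otimes\Delta)$ and $\Delta\nabla$ send $a\otimes b$ to $\sum_i abe_i\otimes f_i$. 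The two counit axioms correspond to the two orientations of the non-degeneracy diagram \eqref{eq:non-degen}: one follows from the snake identity $\sum_i e_i\beta(f_i\otimes a) = a$ read off from \eqref{eq:non-degen}, and the other, $\sum_i \beta(a\otimes e_i)f_i = a$, is its mirror image.

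What remains, and what I expect to be the main obstacle, is coassociativity of $\Delta$ together with the left-hand Frobenius square in \eqref{eq:frob-condition}. Both reduce to a single \emph{balance} property of the Casimir element $\alpha(1)$, namely $\sum_i ae_i\otimes f_i = \sum_i e_i\otimes f_ia$ for all $a\in A$; granting this, coassociativity follows by rewriting $\sum_{i,j}ae_ie_j\otimes f_j\otimes f_i = \sum_{i,j}ae_i\otimes f_ie_j\otimes f_j$, and the left Frobenius square follows exactly as the right one did. The subtlety is that the balance property, like the second counit axiom, uses non-degeneracy of $\beta$ on \emph{both} sides, whereas \eqref{eq:non-degen} records only one snake identity. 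I would resolve this by first observing that the existence of a co-pairing $\alpha\in A\otimes A$, necessarily a finite sum, forces $A$ to be finite dimensional, after which associativity makes left and right non-degeneracy of $\beta$ equivalent and supplies the mirror snake identity. With both snake identities available, the balance property is obtained by pairing each tensor factor against $\beta$ and using that a non-degenerate $\beta$ separates points, which closes the diagram chase.
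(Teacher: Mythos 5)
Your proof is correct, and it is worth noting that the paper itself contains no proof of Proposition~\ref{prop:frob-alg-pairing}: the equivalence is stated as classical, with pointers to \cite[Chapter 6]{MR1653294} and \cite[Section 2.1]{MR2695771}, and Remark~\ref{rema:frob-alg-p-c-ring} merely records the dictionary $\beta=\epsilon\nabla$, $\alpha=\Delta\eta$, $\Delta=(\nabla\otimes 1)(1\otimes\alpha)$, $\epsilon=\beta(1\otimes\eta)$ whose consistency your argument actually verifies. So what you are supplying is the standard Casimir-element proof that the paper delegates to the literature, and your main observation is exactly the right one: diagram~\eqref{eq:non-degen} encodes only the single snake identity $\sum_i e_i\,\beta(f_i\otimes a)=a$, while the second counit axiom, coassociativity of $\Delta$, and the left square of \eqref{eq:frob-condition} all require the mirror identity $\sum_i \beta(a\otimes e_i)\,f_i=a$ and the balance property $\sum_i ae_i\otimes f_i=\sum_i e_i\otimes f_ia$. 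Your resolution is sound: the snake identity places every $a\in A$ in the span of the finitely many $e_i$, so $A$ is finite dimensional; then $R\colon a\mapsto\beta(-\otimes a)$ is injective, hence bijective, and $L\colon a\mapsto\beta(a\otimes -)$ is bijective as well, since $L(a)=0$ forces the evaluation functional at $a$ to vanish on $\mathrm{im}\,R=A^{*}$. The one step you elide is that the mirror snake identity must hold for the \emph{same} co-pairing $\alpha$, not merely for some co-pairing $\alpha'=\sum_j e_j'\otimes f_j'$ produced by bijectivity of $L$; this is a one-line insertion of dual bases,
\begin{equation*}
\sum_i \epsilon(ae_i)\,f_i \;=\; \sum_{i,j} \epsilon(ae_i)\,\epsilon(f_ie_j')\,f_j' \;=\; \sum_j \epsilon(ae_j')\,f_j' \;=\; a,
\end{equation*}
where the middle equality applies the given snake identity to $e_j'$ (so that $\epsilon(ae_j')=\sum_i\epsilon(ae_i)\,\epsilon(f_ie_j')$) and the last is the mirror identity for $\alpha'$. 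With that patch, your separation argument for balance goes through verbatim: pairing the second tensor factor against the functionals $\beta(-\otimes x)$, which exhaust $A^{*}$ by finite-dimensionality, sends both sides to $ax$, using associativity in the form $\beta(f_ia\otimes x)=\beta(f_i\otimes ax)$, and then coassociativity and the left Frobenius square follow from balance exactly as you indicate.
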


\begin{rema}\label{rema:frob-alg-p-c-ring}
A Frobenius algebra $(A,\nabla,\eta,\Delta,\epsilon)$ has associative non-degenerate pairing $\beta$ and co-pairing $\alpha$ given by
\begin{center}
\begin{tikzcd}
\beta: A\otimes A  \arrow{r}{\nabla} & A \arrow{r}{\epsilon} & k,
\end{tikzcd}
\end{center}
\begin{center}
\begin{tikzcd}
\alpha: k \arrow{r}{\eta} & A \arrow{r}{\Delta} & A\otimes A.
\end{tikzcd}
\end{center}
A Frobenius algebra $(A,\nabla,\eta)$ with an associative non-degenerate pairing $\beta: A\otimes A \to k$ and co-pairing $\alpha:k \to A\otimes A$ has comultiplication(s) $\Delta$ and counit(s) $\epsilon$ given by the following commutative diagrams.
\begin{center}
\begin{tikzcd}
A\otimes k \arrow{r}{1\otimes\alpha} & A \otimes A\otimes A \arrow{d}{\nabla\otimes 1}\\
A \arrow{u}{\cong} \arrow[swap]{d}{\cong} \arrow{r}{\Delta} & A\otimes A\\
k\otimes A \arrow{r}{\alpha\otimes 1} & A \otimes A\otimes A \arrow[swap]{u}{1\otimes\nabla}
\end{tikzcd}\quad
\begin{tikzcd}
A\otimes k \arrow{r}{1\otimes \eta} & A\otimes A \arrow{d}{\beta}\\
A \arrow{u}{\cong} \arrow[swap]{d}{\cong} \arrow{r}{\epsilon} & k\\
k\otimes A \arrow{r}{\eta\otimes 1} & A\otimes A \arrow[swap]{u}{\beta}
\end{tikzcd}
\end{center}
\end{rema}

\begin{defi}
A Frobenius algebra $A$ with associative non-degenerate pairing $\beta:A\otimes A \to k$ is \emph{symmetric} when the following diagram commutes.
\begin{center}
\begin{tikzcd}
A\otimes A \arrow{r}{\sigma_{12}} \arrow[swap]{rd}{\beta} & A\otimes A \arrow{d}{\beta}\\
& k
\end{tikzcd}
\end{center}
\end{defi}

\begin{defi}
A Frobenius algebra $(A,\nabla,\eta,\Delta,\epsilon)$ is \emph{special} when the following diagram commutes.
\begin{center}
\begin{tikzcd}
A \arrow{r}{\Delta} \arrow[swap]{rd}{1} & A\otimes A \arrow{d}{\nabla}\\
& A
\end{tikzcd}
\end{center}
\end{defi}

\begin{prop}\label{prop:frob-special-separable-comultiplication}
Let $(A,\nabla,\eta,\Delta,\epsilon)$ be a Frobenius algebra. Then $A$ is special if and only if $(A,\nabla,\eta)$ is a separable algebra where $\Delta$ is a right inverse of $\nabla$.
\end{prop}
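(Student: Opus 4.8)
The plan is to reduce both implications to the single identity $\nabla\circ\Delta = 1$, which is exactly the condition defining a special Frobenius algebra. First I would unwind the definition of separability for the candidate section $\Gamma = \Delta$. The left-hand diagram in the definition of a separable algebra then becomes $\nabla\circ\Delta = 1$, i.e.\ precisely the special condition; the right-hand diagram becomes the assertion that $\Delta$ is a morphism of $A$-bimodules, where $A\otimes A$ carries the outer bimodule structure induced by $\nabla$ on its two factors. Thus everything hinges on showing that, for \emph{any} Frobenius algebra, the comultiplication $\Delta$ is automatically an $A$-bimodule morphism; once this is established, being special and being separable with $\Gamma=\Delta$ both collapse to the same equation $\nabla\circ\Delta=1$.

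The core of the argument — and the step I expect to be the main obstacle — is verifying that the Frobenius conditions \eqref{eq:frob-condition} are exactly what makes $\Delta$ bimodule-linear. I would read the left diagram of \eqref{eq:frob-condition}, namely $\Delta\circ\nabla = (1\otimes\nabla)\circ(\Delta\otimes 1)$, as the right $A$-linearity of $\Delta$, and the right diagram, $\Delta\circ\nabla = (\nabla\otimes 1)\circ(1\otimes\Delta)$, as its left $A$-linearity. Starting from the top path $\Delta\circ\nabla\circ(1\otimes\nabla)$ of the bimodule diagram and using associativity of $\nabla$ to rewrite it as $\Delta\circ\nabla\circ(\nabla\otimes 1)$, I would apply the left diagram of \eqref{eq:frob-condition} to the outer $\Delta\circ\nabla$, obtaining $(1\otimes\nabla)\circ(\Delta\otimes 1)\circ(\nabla\otimes 1)$. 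By functoriality of $\otimes$ the inner part is $(\Delta\circ\nabla)\otimes 1$, to which I would apply the right diagram of \eqref{eq:frob-condition}, turning the whole composite into $(1\otimes\nabla)\circ(\nabla\otimes 1\otimes 1)\circ(1\otimes\Delta\otimes 1)$. A final appeal to functoriality collapses $(1\otimes\nabla)\circ(\nabla\otimes 1\otimes 1)$ into $\nabla\otimes\nabla$, so the composite equals $(\nabla\otimes\nabla)\circ(1\otimes\Delta\otimes 1)$, which is exactly the bottom path of the bimodule diagram. Hence the right-hand separability diagram commutes for $\Gamma=\Delta$. The only delicacy is bookkeeping: one must apply the two Frobenius conditions in the correct order and track which tensor factor each multiplication acts on, precisely the manipulation the diagrammatic formalism is designed to streamline.

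With the bimodule-morphism property in hand, both directions are immediate. If $A$ is special then $\nabla\circ\Delta=1$, which is the right-inverse half of separability, and the bimodule-morphism half holds for free by the previous paragraph; hence $(A,\nabla,\eta)$ is separable with $\Delta$ a right inverse of $\nabla$. Conversely, if $(A,\nabla,\eta)$ is separable with $\Delta$ serving as the right inverse of $\nabla$, then the right-inverse diagram is exactly $\nabla\circ\Delta=1$, which is the special condition. This completes the equivalence.
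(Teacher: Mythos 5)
Your proof is correct. Note that the paper itself states Proposition~\ref{prop:frob-special-separable-comultiplication} without proof, treating it as standard background (it can be found in the cited literature on Frobenius algebras), so there is no in-paper argument to compare against; your write-up supplies the missing details, and it does so by the standard route. The key computation checks out: writing the top path of the bimodule diagram as $\Delta\circ\nabla\circ(1\otimes\nabla) = \Delta\circ\nabla\circ(\nabla\otimes 1)$ by associativity, then applying the left diagram of \eqref{eq:frob-condition} to the outer $\Delta\circ\nabla$, the right diagram to the resulting inner factor $(\Delta\circ\nabla)\otimes 1$, and collapsing $(1\otimes\nabla)\circ(\nabla\otimes 1\otimes 1) = \nabla\otimes\nabla$ by the interchange law yields exactly $(\nabla\otimes\nabla)\circ(1\otimes\Delta\otimes 1)$, i.e.\ the bimodule condition for $\Gamma = \Delta$. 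With that established, both implications do indeed reduce to the single equation $\nabla\circ\Delta = 1$, since the definition of separability with $\Delta$ as the chosen section consists of precisely this equation plus the bimodule diagram, and the latter holds unconditionally for any Frobenius algebra. A further merit worth noting: your argument is purely diagrammatic, using only associativity, the Frobenius conditions, and functoriality of $\otimes$, so it is valid verbatim for Frobenius objects in an arbitrary monoidal category, which is exactly the level of generality the paper claims for its results and in particular for the application in Corollary~\ref{coro:ttp-cttp-special-Frobenius}.
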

\begin{proof}
Clearly a special Frobenius algebra is separable as claimed. For the converse, it suffices to show that the comultiplication is a morphism of $A$ bimodules. The commutativity of the following diagram guarantees that.
\begin{equation}
\begin{tikzcd}
AAA \arrow{r}{\nabla 1} \arrow[swap]{dd}{1 \Delta 1} & AA \arrow{r}{\nabla} \arrow{d}{\Delta 1} & A \arrow{dd}{\Delta}\\
 & AAA \arrow{dr}{1 \nabla} & \\
AAAA \arrow{rr}{\nabla \nabla} \arrow{ur}{\nabla 1} & & AA
\end{tikzcd}
\end{equation}
\end{proof}

\section{Twisted tensor products as algebras and coalgebras}\label{sec:ttp-cttp}

In this section, we recall the definition of twisted tensor products in the sense of \v{C}ap, Schichl, and Van\v{z}ura \cite{MR1352565}, and we endow them with natural notions of counit and comultiplication. Unless otherwise stated, the (co)algebra structure(s) on twisted tensor products will be the one(s) inherited from the twisting map for the remainder of this paper.

\begin{defi}\label{defi:twisting-map}
Let $(A,\nabla_A,\eta_A)$ and $(B,\nabla_B,\eta_B)$ be unital associative $k$-algebras. Let $\tau:B\otimes A\rightarrow A\otimes B$ be a bijective $k$-linear map such that the following diagrams commute.
\begin{equation}\label{diag:tau-units}
\begin{tikzcd}
B\otimes k \arrow[swap]{d}{1\otimes \eta_A} \arrow{r}{\cong} & k\otimes B \arrow{d}{\eta_A\otimes 1} \\
B\otimes A \arrow{r}{\tau} & A\otimes B
\end{tikzcd} \quad \begin{tikzcd}
k\otimes A \arrow[swap]{d}{\eta_B\otimes 1} \arrow{r}{\cong} & A\otimes k \arrow{d}{1\otimes \eta_B} \\
B\otimes A \arrow{r}{\tau} & A\otimes B
\end{tikzcd}
\end{equation}
\begin{equation}\label{diag:tau-multiplications}
\begin{tikzcd}
B\otimes B\otimes A\otimes A  \arrow[swap]{d}{1\otimes \tau \otimes 1} \arrow{r}{\nabla_B \otimes \nabla_A} & B\otimes A \arrow{r}{\tau} & A\otimes B \\
B\otimes A \otimes B\otimes A \arrow{r}{\tau\otimes \tau} & A\otimes B\otimes A\otimes B \arrow{r}{1\otimes \tau\otimes 1} & A\otimes A\otimes B\otimes B \arrow[swap]{u}{\nabla_A\otimes \nabla_B}
\end{tikzcd}
\end{equation}
We say that $\tau$ is a \emph{twisting map}. The trivial twisting map is $\sigma_{12}:B\otimes A\rightarrow A\otimes B$. When $A$ and $B$ are $k$-algebras graded by commutative groups $F$ and $G$ respectively, a twisting map $\tau:B\otimes A\to A\otimes B$ is said to be \emph{strongly graded} provided $\tau(B_j\otimes A_i) \subseteq A_i\otimes B_j$ for all $i\in F$ and $j\in G$.
\end{defi}

These diagrams encode the compatibility conditions of a twisting map $\tau$ with the unital associative structures of $A$ and $B$. Diagram \eqref{diag:tau-units} can be read as stating that $\tau$ preserves the units $\eta_A$ and $\eta_B$ of $A$ and $B$ respectively. Diagram \eqref{diag:tau-multiplications} is equivalent to $\tau$ preserving the multiplications $\nabla_A$ and $\nabla_B$ of $A$ and $B$ respectively.

\begin{lemm}\label{lemm:twist-multiplication-compatibility}
Let $(A,\nabla_A,\eta_A)$ and $(B,\nabla_B,\eta_B)$ be unital associative $k$-algebras and let $\tau:B\otimes A\rightarrow A\otimes B$ be a twisting map. Then the following diagrams commute.
\begin{equation}\label{diag:tau-commutes-mA}
\begin{tikzcd}
B\otimes A\otimes A  \arrow{r}{\tau\otimes 1} \arrow[swap]{d}{1\otimes \nabla_A} & A\otimes B\otimes A \arrow{r}{1\otimes \tau} & A\otimes A\otimes B \arrow{d}{\nabla_A\otimes 1} \\
B\otimes A \arrow{rr}{\tau} & & A\otimes B
\end{tikzcd}
\end{equation}
\begin{equation}\label{diag:tau-commutes-mB}
\begin{tikzcd}
B\otimes B\otimes A  \arrow{r}{1\otimes \tau} \arrow[swap]{d}{\nabla_B\otimes 1} & B\otimes A\otimes B \arrow{r}{\tau\otimes 1} & A\otimes B\otimes B \arrow{d}{1\otimes\nabla_B}\\
B\otimes A \arrow{rr}{\tau} & & A\otimes B
\end{tikzcd}
\end{equation}
\end{lemm}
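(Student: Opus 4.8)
The plan is to obtain both \eqref{diag:tau-commutes-mA} and \eqref{diag:tau-commutes-mB} as \emph{specializations} of the single multiplication-compatibility diagram \eqref{diag:tau-multiplications}, by feeding a unit into one of the two algebras. The guiding idea is that \eqref{diag:tau-multiplications} simultaneously encodes compatibility of $\tau$ with $\nabla_A$ and with $\nabla_B$; setting one of the two multiplications against a unit should trivialize that multiplication and isolate exactly the compatibility with the other.

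To prove \eqref{diag:tau-commutes-mA}, I would precompose every edge of \eqref{diag:tau-multiplications} with the unit-insertion morphism
\[
u_A\colon B\otimes A\otimes A \xrightarrow{\cong} k\otimes B\otimes A\otimes A \xrightarrow{\eta_B\otimes 1\otimes 1\otimes 1} B\otimes B\otimes A\otimes A,
\]
which places a unit of $B$ in the first tensor slot. Along the top edge $\tau\circ(\nabla_B\otimes\nabla_A)$, the map $\nabla_B$ multiplies the inserted unit against the surviving $B$-factor, so by the unit axiom of $(B,\nabla_B,\eta_B)$ this edge collapses to $\tau\circ(1\otimes\nabla_A)$, which is exactly the bottom path of \eqref{diag:tau-commutes-mA}. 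Along the bottom edge $(\nabla_A\otimes\nabla_B)\circ(1\otimes\tau\otimes1)\circ(\tau\otimes\tau)\circ(1\otimes\tau\otimes1)$, I would track the inserted unit through the three copies of $\tau$: each time $\tau$ meets the slot carrying the $B$-unit, the right-hand square of \eqref{diag:tau-units} forces $\tau(\eta_B\otimes 1)=(1\otimes\eta_B)$ up to the canonical isomorphisms, so the unit is transported unchanged to the final $B$-slot, where $\nabla_B$ absorbs it. After these cancellations the bottom edge collapses to $(\nabla_A\otimes1)\circ(1\otimes\tau)\circ(\tau\otimes1)$, the top path of \eqref{diag:tau-commutes-mA}. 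Since \eqref{diag:tau-multiplications} commutes, precomposing its two edges with $u_A$ yields equal composites, and this equality is precisely the commutativity of \eqref{diag:tau-commutes-mA}.

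The argument for \eqref{diag:tau-commutes-mB} is entirely symmetric: I would instead insert a unit of $A$ into the first $A$-slot via
\[
u_B\colon B\otimes B\otimes A \xrightarrow{\cong} B\otimes B\otimes k\otimes A \xrightarrow{1\otimes 1\otimes \eta_A\otimes 1} B\otimes B\otimes A\otimes A,
\]
so that now $\nabla_A$ absorbs the inserted $A$-unit while each $\tau$ meeting it is trivialized by the left-hand square of \eqref{diag:tau-units}, namely $\tau(1\otimes\eta_A)=\eta_A\otimes1$. The top edge of \eqref{diag:tau-multiplications} then becomes $\tau\circ(\nabla_B\otimes1)$ and the bottom edge becomes $(1\otimes\nabla_B)\circ(\tau\otimes1)\circ(1\otimes\tau)$, which are exactly the two paths of \eqref{diag:tau-commutes-mB}.

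The only genuine work, and the step I expect to require the most care, is the bookkeeping along the lower edge of \eqref{diag:tau-multiplications}: one must check that, under the unit insertion, each of the three $\tau$'s and each of the two final multiplications either acts trivially (on a unit, via \eqref{diag:tau-units} and the algebra unit axiom) or survives unchanged, with no leftover terms, so that the five-fold composite genuinely collapses onto the three-fold composite of the target diagram. This is routine but slightly intricate; once it is recorded as the diagram obtained by whiskering \eqref{diag:tau-multiplications} with $u_A$ (respectively $u_B$), the conclusion is immediate from the commutativity of \eqref{diag:tau-multiplications}.
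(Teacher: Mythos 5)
Your proposal is correct and is essentially the paper's own proof: the single large diagram given there is precisely the whiskering of \eqref{diag:tau-multiplications} by the unit insertion $B\otimes A\otimes A \cong B\otimes k\otimes A\otimes A \xrightarrow{1\otimes \eta_B\otimes 1\otimes 1} B\otimes B\otimes A\otimes A$, with the inserted unit carried through the twists via \eqref{diag:tau-units} and absorbed by $\nabla_B$ via unitality, exactly as you describe (the paper places $\eta_B$ in the second $B$-slot rather than the first, an immaterial difference). The bookkeeping you flag is recorded there as one commutative diagram, and \eqref{diag:tau-commutes-mB} is handled, as in your proposal, by the symmetric argument.
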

\begin{proof}
The following commutative diagram shows the commutativity of \eqref{diag:tau-commutes-mA}.
\begin{center}
\begin{tikzcd}
& BAA \arrow{rrr}{\tau 1} \arrow[swap, bend right=60]{ddd}{1\nabla_A} \arrow{d}{\cong} \arrow{dr}{\cong} & \phantom{x}  & \phantom{x}  & ABA \arrow{r}{1\tau} \arrow{d}{\cong} & AAB \arrow{d}{\cong} \arrow[bend left=60]{ddd}{\nabla_A 1} & \\
& BkAA \arrow{d}{1\eta_B 11} \arrow{r}{\cong} & BAkA \arrow{r}{\tau 11} \arrow{d}{11\eta_B 1} & ABkA \arrow{d}{11\eta_B 1} \arrow{r}{\cong} \arrow{ur}{\cong} & ABAk \arrow{r}{1\tau 1} \arrow{d}{111\eta_B} & AABk \arrow[swap]{d}{111\eta_B} & \\
\phantom{x} & BBAA \arrow{r}{1\tau 1} \arrow{d}{\nabla_B \nabla_A} & BABA \arrow{r}{\tau 11} & ABBA \arrow{r}{11\tau} & ABAB \arrow{r}{1\tau 1} & AABB \arrow[swap]{d}{\nabla_A \nabla_B} & \phantom{x}\\
& BA \arrow{rrrr}{\tau} & & & & AB &
\end{tikzcd}
\end{center}
The commutativity of \eqref{diag:tau-commutes-mB} follows analogously.
\end{proof}

In fact, the above shows that a bijective $k$-linear map $\tau:B\otimes A\rightarrow A\otimes B$ is a twisting map if and only if diagrams \eqref{diag:tau-units}, \eqref{diag:tau-commutes-mA}, and \eqref{diag:tau-commutes-mB} commute.

\begin{defi}\label{defi:twisted-tensor-product}
Let $(A,\nabla_A,\eta_A)$ and $(B,\nabla_B,\eta_B)$ be unital associative $k$-algebras and let $\tau:B\otimes A\rightarrow A\otimes B$ be a twisting map. The \emph{twisted tensor product algebra} $A\otimes_{\tau} B$ is the $k$ vector space $A\otimes B$ with the following multiplication and unit.
\begin{center}
\begin{tikzcd}
\nabla_{A\otimes_{\tau} B}: (A\otimes B)\otimes (A\otimes B)  \arrow{r}{1\otimes \tau \otimes 1} & A\otimes A\otimes B\otimes B \arrow{r}{\nabla_A\otimes\nabla_B} & A\otimes B
\end{tikzcd}
\end{center}
\begin{center}
\begin{tikzcd}
\eta_{A\otimes_{\tau} B}: k \arrow{r}{\cong} & k\otimes k  \arrow{r}{\eta_A\otimes \eta_B} & A\otimes B
\end{tikzcd}
\end{center}
\end{defi}

\begin{prop} \label{prop:AtB-unital-associative}
Let $(A,\nabla_A,\eta_A)$ and $(B,\nabla_B,\eta_B)$ be unital associative $k$-algebras and let $\tau:B\otimes A\rightarrow A\otimes B$ be a twisting map. Then $(A\otimes_{\tau} B,\nabla_{A\otimes_{\tau} B},\eta_{A\otimes_{\tau} B})$ is a unital associative $k$-algebra.
\end{prop}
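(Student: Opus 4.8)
The plan is to verify the three defining diagrams of a unital associative $k$-algebra directly for $(A\otimes_{\tau} B, \nabla_{A\otimes_\tau B}, \eta_{A\otimes_\tau B})$, relying throughout on the commutative-diagram formalism and the compatibility conditions encoded in Lemma~\ref{lemm:twist-multiplication-compatibility}. Since associativity is the substantive claim, I would dispatch the unit axioms first and reserve the bulk of the effort for associativity.

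First I would establish the two unit diagrams. The left unit law asks that the composite $(A\otimes B)\otimes k \xrightarrow{1\otimes\eta_{A\otimes_\tau B}} (A\otimes B)\otimes(A\otimes B) \xrightarrow{\nabla_{A\otimes_\tau B}} A\otimes B$ equals the canonical isomorphism. Expanding $\eta_{A\otimes_\tau B}$ as $\eta_A\otimes\eta_B$ (after the coherence isomorphism $k\cong k\otimes k$) and $\nabla_{A\otimes_\tau B}$ as $(\nabla_A\otimes\nabla_B)\circ(1\otimes\tau\otimes 1)$, the middle instance of $\tau$ acts on a factor containing $\eta_A$ in its $A$-slot; here the unit-compatibility diagram~\eqref{diag:tau-units} lets me slide $\tau$ past the unit, after which the left unit laws for $A$ and for $B$ individually collapse the expression to the identity. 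The right unit law is symmetric, using the other square in~\eqref{diag:tau-units}. These verifications are routine diagram chases and I would present each as a single pasted commutative diagram.

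The heart of the proof is associativity: the outer square
\begin{equation*}
\begin{tikzcd}
(A\otimes B)^{\otimes 3} \arrow{r}{\nabla_{A\otimes_\tau B}\otimes 1} \arrow[swap]{d}{1\otimes\nabla_{A\otimes_\tau B}} & (A\otimes B)^{\otimes 2} \arrow{d}{\nabla_{A\otimes_\tau B}}\\
(A\otimes B)^{\otimes 2} \arrow{r}{\nabla_{A\otimes_\tau B}} & A\otimes B
\end{tikzcd}
\end{equation*}
must commute. Both composites amount to taking three tensor-factors $A\otimes B\otimes A\otimes B\otimes A\otimes B$ and shuffling all three $B$'s to the right past the intervening $A$'s using $\tau$, then multiplying with $\nabla_A$ and $\nabla_B$. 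The strategy is to reduce each of the two composites to the \emph{same} normal form $A\otimes A\otimes A\otimes B\otimes B\otimes B \xrightarrow{\nabla_A^{(3)}\otimes\nabla_B^{(3)}} A\otimes B$, where $\nabla_A^{(3)}$ and $\nabla_B^{(3)}$ denote triple products. To move between the two bracketings of the shuffle, I would insert an intermediate large diagram in which the associativity of $\nabla_A$ and $\nabla_B$ (the third algebra axiom for $A$ and $B$) and the two hexagons~\eqref{diag:tau-commutes-mA} and~\eqref{diag:tau-commutes-mB} from Lemma~\ref{lemm:twist-multiplication-compatibility} are applied to commute $\tau$ past the multiplications in the correct order.

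The main obstacle is bookkeeping: there are several valid sequences of transpositions carrying the interleaved word to its normal form, and showing that the two composites coincide requires pasting together a genuinely large diagram whose internal cells are each justified by one of~\eqref{diag:tau-commutes-mA},~\eqref{diag:tau-commutes-mB}, or an associativity square for $A$ or $B$. I would organize this by choosing one explicit path of transpositions, rewriting the top-right composite along it, then the bottom-left composite along it, and exhibiting the single commuting diagram that interpolates between them; the key commuting cells are exactly the hexagons of Lemma~\ref{lemm:twist-multiplication-compatibility}, which is why that lemma was proved first. No new idea beyond careful diagram pasting is required, so I expect the difficulty to be entirely expository rather than conceptual.
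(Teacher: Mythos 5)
Your proposal is correct and follows essentially the same route as the paper: the unit laws are dispatched via the unit-compatibility squares~\eqref{diag:tau-units} together with the unit axioms of $A$ and $B$, and associativity is proved by pasting a large diagram that reduces both composites to the common normal form $A\otimes A\otimes A\otimes B\otimes B\otimes B \to A\otimes B$, with the internal cells justified exactly by the hexagons~\eqref{diag:tau-commutes-mA} and~\eqref{diag:tau-commutes-mB} of Lemma~\ref{lemm:twist-multiplication-compatibility} and the associativity of $\nabla_A$ and $\nabla_B$. Only the bookkeeping of which transposition path to draw differs, which is immaterial.
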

\begin{proof}
Left unitality follows from the following commutative diagram.
\begin{center}
\begin{tikzcd}
kAB \arrow[swap, bend right=60]{ddd}{\eta_{A\otimes_{\tau} B} 11} \arrow{rrr}{\cong} \arrow{d}{\cong} & & & AB \arrow{d}{\cong} \\
kkAB \arrow{r}{\cong} \arrow{d}{\eta_A 111} & kAkB \arrow{r}{\cong} \arrow{d}{\eta_A 111} & AkB \arrow{r}{\cong} \arrow{d}{\cong} & AB \arrow{d}{\cong} \\
AkAB \arrow{d}{1\eta_B 11} \arrow{r}{\cong} & AAkB \arrow{d}{11\eta_B 1} \arrow{r}{\nabla_A 1} & AkB \arrow{d}{1\eta_B 1} \arrow{r}{\cong} & AB \arrow{d}{\cong} \\
ABAB \arrow{r}{1\tau 1} & AABB \arrow{r}{\nabla_A 11} & ABB \arrow{r}{1 \nabla_B} & AB
\end{tikzcd}
\end{center}
Right unitality follows analogously. Associativity follows from the following commutative diagram.
\begin{center}
\begin{tikzcd}
ABABAB \arrow[swap, bend right=60]{dd}{\nabla_{A\otimes_{\tau} B} 1} \arrow[swap, bend left=20]{rrr}{1 \nabla_{A\otimes_{\tau} B}} \arrow{r}{111\tau 1} \arrow{d}{1\tau 111} & ABAABB \arrow{rr}{11\nabla_A \nabla_B} \arrow{d}{1\tau 111} & & ABAB \arrow{d}{1\tau 1} \arrow[bend left=60]{dd}{\nabla_{A\otimes_{\tau} B}} \\
AABBAB \arrow{r}{111 \tau 1} \arrow{d}{\nabla_A \nabla_B11} & AABABB \arrow{r}{11\tau 11} & AAABBB \arrow{r}{\nabla_A 1 \nabla_B 1} \arrow{d}{\nabla_A 1 \nabla_B 1} & AABB \arrow[swap]{d}{\nabla_A \nabla_B} \\
ABAB \arrow[bend right=20]{rrr}{\nabla_{A\otimes_{\tau} B}} \arrow{rr}{1\tau 1} & & AABB \arrow{r}{\nabla_A \nabla_B} & AB\\
\end{tikzcd}
\end{center}
The original proof is \cite[Proposition/Definition 2.3]{MR1352565}.
\end{proof}

As before, the above shows that given a bijective $k$-linear map $\tau:B\otimes A\rightarrow A\otimes B$ then $(A\otimes_{\tau} B,\nabla_{A\otimes_{\tau} B},\eta_{A\otimes_{\tau} B})$ is a unital associative algebra if and only if \eqref{diag:tau-units} and \eqref{diag:tau-multiplications} commute, equivalently if and only if $\tau$ is a twisting map.

\begin{exam}[Twisting by a bicharacter] \label{exam:twisting-bicharacter} \cite[Definition 2.2]{MR2450729}
Let $A$ and $B$ be $k$-algebras graded by abelian groups $F$ and $G$ respectively, let $t:F\otimes_{\mathbb{Z}} G \rightarrow k^{\times}$ be a homomorphism of abelian groups and denote $t(f\otimes_{\mathbb{Z}} g) = t^{\langle f | g\rangle}$ for all $f\in F$ and $g\in G$. Then $\tau:B\otimes A\rightarrow A\otimes B$ given by linearly extending $\tau(b\otimes a) = t^{\langle |a| | |b|\rangle} a\otimes b$ for all homogeneous $a\in A$ and $b\in B$ is a twisting map. We denote $A\otimes^{t} B \coloneqq A\otimes_{\tau} B$.
\end{exam}

So far, we have only required that $A$ and $B$ are unital associative algebras. When they are also counital coassociative coalgebras, a twisting map $\tau$ induces natural candidates for counit and comultiplication in $A\otimes B$.

\begin{defi}\label{rema:coalg-defi}
Let $(A,\Delta_A,\epsilon_A)$ and $(B,\Delta_B,\epsilon_B)$ be counital coassociative $k$-coalgebras and let $\tau:B\otimes A\rightarrow A\otimes B$ be a $k$-linear map. We define $\Delta_{A\otimes_{\tau} B}$ and $\epsilon_{A\otimes_{\tau} B}$ as
\begin{center}
\begin{tikzcd}
\Delta_{A\otimes_{\tau} B}: A\otimes B  \arrow{r}{\Delta_A\otimes \Delta_B} & A\otimes A\otimes B\otimes B \arrow{r}{1\otimes \tau^{-1}\otimes 1} & (A\otimes B) \otimes (A\otimes B),
\end{tikzcd}
\end{center}
\begin{center}
\begin{tikzcd}
\epsilon_{A\otimes_{\tau} B}: A\otimes B \arrow{r}{\epsilon_A\otimes \epsilon_B} & k\otimes k  \arrow{r}{\cong} & k.
\end{tikzcd}
\end{center}
\end{defi}

\begin{prop}\label{prop:AtB-counital-coassociative}
Let $(A,\Delta_A,\epsilon_A)$ and $(B,\Delta_B,\epsilon_B)$ be counital coassociative $k$-coalgebras and let $\tau:B\otimes A\rightarrow A\otimes B$ be a $k$-linear map. Then $(A\otimes_{\tau} B,\Delta_{A\otimes_{\tau} B},\epsilon_{A\otimes_{\tau} B})$ is a counital coassociative coalgebra if and only if the following diagrams commute.
\begin{equation}\label{diag:tau-counits}
\begin{tikzcd}
B\otimes A \arrow{r}{\tau} \arrow[swap]{d}{1\otimes \epsilon_A} & A\otimes B \arrow{d}{\epsilon_A\otimes 1} \\
B\otimes k \arrow{r}{\cong} & k\otimes B
\end{tikzcd} \quad \begin{tikzcd}
B\otimes A \arrow{r}{\tau} \arrow[swap]{d}{\epsilon_B\otimes 1} & A\otimes B \arrow{d}{1\otimes \epsilon_B} \\
k\otimes A \arrow{r}{\cong} & A\otimes k
\end{tikzcd}
\end{equation}
\begin{equation}\label{diag:tau-comultiplications}
\begin{tikzcd}
B\otimes A \arrow{r}{\tau} \arrow[swap]{d}{\Delta_B\otimes \Delta_A} & A\otimes B \arrow{r}{\Delta_A\otimes \Delta_B} & A\otimes A\otimes B\otimes B\\
B\otimes B \otimes A\otimes A \arrow{r}{1\otimes\tau \otimes 1} & B\otimes A\otimes B\otimes A \arrow{r}{\tau \otimes \tau} & A\otimes B\otimes A\otimes B \arrow[swap]{u}{1\otimes \tau \otimes 1}
\end{tikzcd}
\end{equation}
\end{prop}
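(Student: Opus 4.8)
The plan is to prove this statement as the coalgebraic counterpart of Proposition~\ref{prop:AtB-unital-associative}: the comultiplication and counit of Remark~\ref{rema:coalg-defi} are obtained from the multiplication and unit of Definition~\ref{defi:twisted-tensor-product} by reversing every arrow and replacing $\tau$ by $\tau^{-1}$ (noting that the comultiplication of Remark~\ref{rema:coalg-defi} already presupposes $\tau$ invertible), and under this formal duality the hypotheses \eqref{diag:tau-counits} and \eqref{diag:tau-comultiplications} correspond exactly to the unit compatibility \eqref{diag:tau-units} and the multiplication compatibility \eqref{diag:tau-multiplications}. I would establish the two implications separately, keeping the argument diagrammatic throughout.

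For the ``if'' direction I would assume \eqref{diag:tau-counits} and \eqref{diag:tau-comultiplications} and verify the three coalgebra axioms for $(A\otimes_{\tau} B,\Delta_{A\otimes_{\tau} B},\epsilon_{A\otimes_{\tau} B})$ by exhibiting, for each, a commutative diagram that is the arrow-reversal of the corresponding diagram in the proof of Proposition~\ref{prop:AtB-unital-associative}. Left and right counitality follow from a diagram whose inner cells are the counit axioms of $A$ and of $B$ together with a single instance of \eqref{diag:tau-counits} used to slide $\epsilon_A$ or $\epsilon_B$ past $\tau^{-1}$, mirroring the unitality diagram. Coassociativity follows from a larger diagram built from the coassociativity of $\Delta_A$ and $\Delta_B$ and one application of \eqref{diag:tau-comultiplications} to commute the two central copies of $\tau^{-1}$ past each other, exactly dual to the associativity diagram. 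No genuinely new computation is needed here beyond transcribing the earlier proof in reverse.

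The main obstacle is the ``only if'' direction, where I must recover the pointwise conditions on $\tau$ from the global coalgebra axioms. The key device is that $\tau^{-1}$ can be reconstructed from $\Delta_{A\otimes_{\tau} B}$ by capping with counits: applying $\epsilon_A$ to the first output factor and $\epsilon_B$ to the last output factor of $\Delta_{A\otimes_{\tau} B}$ and collapsing the resulting outer comultiplications via the counit axioms of $A$ and $B$ returns $\tau^{-1}$, and since $\tau$ is bijective this determines $\tau$. To extract \eqref{diag:tau-counits}, I would cap the assumed left (respectively right) counitality of $A\otimes_{\tau} B$ with $\epsilon_B$ (respectively $\epsilon_A$) on the remaining strand and use the counit of $B$ (respectively $A$) to collapse the superfluous comultiplication, isolating exactly one of the two squares in \eqref{diag:tau-counits} written for $\tau^{-1}$; bijectivity of $\tau$ then converts these into the stated squares for $\tau$. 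To extract \eqref{diag:tau-comultiplications}, I would feed the coassociativity identity through the same reconstruction, capping the appropriate outer output strands with counits so that every comultiplication except the two central copies of $\tau^{-1}$ is annihilated, and then invoke bijectivity to rewrite the surviving identity as the stated diagram. The delicate part is purely the bookkeeping: one must choose exactly which strands to cap so that all comultiplications other than those interacting with $\tau$ collapse, leaving precisely the desired equality of maps.

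Finally, I would note that both directions can be obtained at once by applying Proposition~\ref{prop:AtB-unital-associative}, together with the equivalence recorded immediately after it, in the opposite category $\mathcal{C}^{op}$, in which the coalgebra $A\otimes_{\tau} B$ becomes the twisted tensor product algebra for the twisting map corresponding to $\tau^{-1}$. Since the paper's diagrammatic arguments are valid in any (braided) monoidal category, this translates the ``if and only if'' of Proposition~\ref{prop:AtB-unital-associative} directly into the present statement. Even so, I would still record the explicit diagrams, since the verification that the dualized hypotheses \eqref{diag:tau-units} and \eqref{diag:tau-multiplications} become exactly \eqref{diag:tau-counits} and \eqref{diag:tau-comultiplications} is itself the content requiring care.
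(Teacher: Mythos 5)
Your proposal is correct and takes essentially the same approach as the paper, whose entire proof reads ``It follows from reversing the arrows in the proof of Proposition~\ref{prop:AtB-unital-associative}'' (with the equivalence, rather than just one implication, recorded in the remark immediately after that proposition). Your capping-with-counits reconstruction of $\tau^{-1}$ is a faithful and correct unpacking of the ``only if'' direction that the paper leaves implicit in this dualization.
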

\begin{proof}
It follows from reversing the arrows in the proof of Proposition~\ref{prop:AtB-unital-associative}.
\end{proof}

Reversing the arrows in the proof of Lemma~\ref{lemm:twist-multiplication-compatibility} gives that diagram \eqref{diag:tau-comultiplications} commutes if and only if diagrams \eqref{diag:tau-commutes-cmA} and \eqref{diag:tau-commutes-cmB} commute.
\begin{equation}\label{diag:tau-commutes-cmA}
\begin{tikzcd}
B\otimes A \arrow{rr}{\tau} \arrow[swap]{d}{1\otimes \Delta_A} & & A\otimes B \arrow{d}{\Delta_A\otimes 1} \\
B\otimes A\otimes A  \arrow{r}{\tau \otimes 1} & A\otimes B\otimes A \arrow{r}{1 \otimes \tau} & A\otimes A\otimes B
\end{tikzcd}
\end{equation}
\begin{equation}\label{diag:tau-commutes-cmB}
\begin{tikzcd}
B\otimes A \arrow{rr}{\tau} \arrow[swap]{d}{\Delta_B \otimes 1} & & A\otimes B \arrow{d}{1\otimes \Delta_B} \\
B\otimes B\otimes A  \arrow{r}{1\otimes\tau} & B\otimes A\otimes B \arrow{r}{\tau\otimes 1} & A\otimes B\otimes B
\end{tikzcd}
\end{equation}

For computational and technical purposes, it is useful to note that consecutive applications of twisting maps yield the same result. Namely setting
\begin{center}
\begin{tikzcd}
\tau_{2,A}: B\otimes B\otimes A  \arrow{r}{1\otimes \tau} & B\otimes A\otimes B \arrow{r}{\tau\otimes 1} & A\otimes B \otimes B
\end{tikzcd}
\end{center}
\begin{center}
\begin{tikzcd}
\tau_{B,2}: B\otimes A\otimes A  \arrow{r}{\tau\otimes 1} & A\otimes B\otimes A \arrow{r}{1\otimes \tau} & A\otimes A \otimes B
\end{tikzcd}
\end{center}
and recursively considering
\begin{center}
\begin{tikzcd}
\tau_{i,A}: B^{\otimes i} \otimes A  \arrow{rr}{1\otimes \tau_{i-1,A}} & & B\otimes A\otimes B^{\otimes (i-1)} \arrow{rr}{\tau\otimes 1^{\otimes (i-1)}} & & A\otimes B^{\otimes i}
\end{tikzcd}
\end{center}
\begin{center}
\begin{tikzcd}
\tau_{B,j}: B \otimes A^{\otimes j}  \arrow{rr}{\tau_{B,j-1}\otimes 1} & & A^{\otimes (j-1)}\otimes B\otimes A \arrow{rr}{1^{\otimes (j-1)}\otimes \tau} & & A^{\otimes j}\otimes B
\end{tikzcd}
\end{center}
for all $i,j\in \N$, we have the following commutative diagrams.
\begin{center}
\begin{tikzcd}
B^{\otimes (i-1)} \otimes A^{\otimes j} \otimes B \arrow{rr}{1^{\otimes (i-2)}\otimes \tau_{B,j}\otimes 1} & & \cdots \arrow{rr}{1\otimes \tau_{B,j}\otimes 1^{\otimes (i-2)}} & & B \otimes A^{\otimes j} \otimes B^{\otimes (i-1)} \arrow{d}{\tau_{B,j}\otimes 1^{\otimes (i-1)}}\\
B^{\otimes i} \otimes A^{\otimes j}  \arrow{u}{1^{\otimes (i-1)}\otimes \tau_{B,j}} \arrow[swap]{d}{\tau_{i,A}\otimes 1^{\otimes (j-1)}} \arrow[dashed]{rrrr}{\tau_{i,j}} & & & & A^{\otimes j}\otimes B^{\otimes i}\\
A\otimes B^{\otimes i} \otimes A^{\otimes (j-1)} \arrow{rr}{1\otimes \tau_{i,A}\otimes 1^{\otimes (j-2)}} & & \cdots \arrow{rr}{1^{\otimes (j-2)}\otimes \tau_{i,A}\otimes 1} & & A^{\otimes (j-1)} \otimes B^{\otimes i} \otimes A \arrow[swap]{u}{1^{\otimes (j-1)}\otimes \tau_{i,A}}
\end{tikzcd}
\end{center}
In particular, any such consecutive application of twisting maps deserves to be denoted $\tau_{i,j}:B^{\otimes i}\otimes A^{\otimes j} \rightarrow A^{\otimes j}\otimes B^{\otimes i}$. Of course, it is important to note that consecutive applications of twisting maps are compatible with the multiplications, namely the following diagrams commute.
\begin{center}
\begin{tikzcd}
B \otimes A^{\otimes i}  \arrow{r}{\tau_{i,A}} \arrow[swap]{d}{1^{\otimes (i-1)}\otimes \nabla_A} & A^{\otimes i}\otimes B \arrow{d}{\nabla_A \otimes 1^{\otimes (i-1)}}\\
B \otimes A^{\otimes (i-1)}  \arrow{r}{\tau_{i-1,A}} & A^{\otimes (i-1)}\otimes B
\end{tikzcd} \quad \begin{tikzcd}
B^{\otimes i} \otimes A  \arrow{r}{\tau_{i,A}} \arrow[swap]{d}{\nabla_B\otimes 1^{\otimes (i-1)}} & A\otimes B^{\otimes i} \arrow{d}{1^{\otimes (i-1)}\otimes\nabla_B}\\
B^{\otimes (i-1)} \otimes A  \arrow{r}{\tau_{i-1,A}} & A\otimes B^{\otimes (i-1)}
\end{tikzcd}
\end{center}
More concisely, for all $i,j\in \N$ we have the following commutative diagrams.
\begin{center}
\begin{tikzcd}
B^{\otimes i} \otimes A^{\otimes j}  \arrow{rr}{\tau_{i,j}} \arrow[swap]{d}{\nabla_B\otimes 1^{\otimes (i+j-2)}\otimes \nabla_A} & & A^{\otimes j}\otimes B^{\otimes i} \arrow{d}{\nabla_A\otimes 1^{\otimes (i+j-2)}\otimes \nabla_B}\\
B^{\otimes (i-1)} \otimes A^{\otimes (j-1)} \arrow{rr}{\tau_{i-1,j-1}} & & A^{\otimes (j-1)}\otimes B^{\otimes (i-1)}
\end{tikzcd}
\end{center}
When diagrams \eqref{diag:tau-commutes-cmA} and \eqref{diag:tau-commutes-cmB} commute then $\tau$ is compatible with the comultiplications, and the corresponding diagrams also commute.

\begin{theo}\label{theo:twist-separable}
Let $(A,\nabla_A,\eta_A)$ and $(B,\nabla_B,\eta_B)$ be separable unital associative $k$-algebras, and denote by $\Gamma_A:A\to A\otimes A$ and $\Gamma_B:B\to B\otimes B$ the right inverses of the respective multiplications. Then, the following diagrams commute if and only if $(A\otimes_{\tau} B,\nabla_{A\otimes_{\tau} B},\eta_{A\otimes_{\tau} B})$ is a separable unital associative $k$-algebra and $\Gamma_{A\otimes_{\tau} B} : A\otimes_{\tau} B \to (A\otimes_{\tau} B) \otimes (A\otimes_{\tau} B)$ given by $\Gamma_{A\otimes_{\tau} B} = (1\otimes \tau^{-1} \otimes 1)(\Gamma_A \otimes\Gamma_B)$ is a right inverse of its multiplication.
\begin{equation}\label{diag:tau-commutes-sectionA}
\begin{tikzcd}
B\otimes A \arrow{rr}{\tau} \arrow[swap]{d}{1\otimes \Gamma_A} & & A\otimes B \arrow{d}{\Gamma_A\otimes 1} \\
B\otimes A\otimes A  \arrow{r}{\tau \otimes 1} & A\otimes B\otimes A \arrow{r}{1 \otimes \tau} & A\otimes A\otimes B
\end{tikzcd}
\end{equation}
\begin{equation}\label{diag:tau-commutes-sectionB}
\begin{tikzcd}
B\otimes A \arrow{rr}{\tau} \arrow[swap]{d}{\Gamma_B \otimes 1} & & A\otimes B \arrow{d}{1\otimes \Gamma_B} \\
B\otimes B\otimes A  \arrow{r}{1\otimes\tau} & B\otimes A\otimes B \arrow{r}{\tau\otimes 1} & A\otimes B\otimes B
\end{tikzcd}
\end{equation}
\end{theo}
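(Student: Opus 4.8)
The plan is to reduce the claimed equivalence to a single nontrivial condition, and then to establish that condition by a pasting-of-cells argument in the same spirit as Lemma~\ref{lemm:twist-multiplication-compatibility} and the associativity computation in Proposition~\ref{prop:AtB-unital-associative}.

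First I would dispose of the section condition, which in fact holds with no hypothesis on $\tau$ at all. Expanding the definitions and using that $\tau\tau^{-1} = 1$ together with $\nabla_A\Gamma_A = 1$ and $\nabla_B\Gamma_B = 1$, one finds $\nabla_{A\otimes_\tau B}\circ\Gamma_{A\otimes_\tau B} = (\nabla_A\otimes\nabla_B)(1\otimes\tau\otimes 1)(1\otimes\tau^{-1}\otimes 1)(\Gamma_A\otimes\Gamma_B) = (\nabla_A\Gamma_A)\otimes(\nabla_B\Gamma_B) = 1$. Hence $\Gamma_{A\otimes_\tau B}$ is always a right inverse of $\nabla_{A\otimes_\tau B}$, and separability of $A\otimes_\tau B$ via this particular $\Gamma_{A\otimes_\tau B}$ is equivalent to the single requirement that $\Gamma_{A\otimes_\tau B}$ be an $A\otimes_\tau B$-bimodule morphism. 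Both directions of the theorem therefore amount to the equivalence: diagrams \eqref{diag:tau-commutes-sectionA} and \eqref{diag:tau-commutes-sectionB} commute if and only if the bimodule-morphism diagram for $\Gamma_{A\otimes_\tau B}$ commutes.

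For the implication that the two diagrams force the bimodule condition, I would expand the bimodule-morphism diagram for $\Gamma_{A\otimes_\tau B}$ on the object $(A\otimes B)^{\otimes 3}$ into its defining composites and factor it as a pasting of elementary cells, separating the $A$-strands from the $B$-strands exactly as in Proposition~\ref{prop:AtB-unital-associative}. The cells built only from $\nabla_A$ and $\Gamma_A$ commute because $A$ is separable (that is, $\Gamma_A$ is an $A$-bimodule morphism), those built only from $\nabla_B$ and $\Gamma_B$ commute because $B$ is separable, the cells that drag a $\tau$ across a multiplication commute by \eqref{diag:tau-commutes-mA} and \eqref{diag:tau-commutes-mB}, and the $\tau$ coming from $\nabla_{A\otimes_\tau B}$ cancels the $\tau^{-1}$ coming from $\Gamma_{A\otimes_\tau B}$ wherever they meet. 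The only cells that are not automatic are then literal copies of \eqref{diag:tau-commutes-sectionA} and \eqref{diag:tau-commutes-sectionB} (or of their $\tau^{-1}$-transposes, which are equivalent since $\tau$ is bijective), so they commute by hypothesis and the whole diagram commutes.

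For the converse I would recover the two small diagrams by specializing the bimodule condition to the subalgebras $A\otimes 1$ and $1\otimes B$, which generate $A\otimes_\tau B$. Left multiplication by $a\otimes 1$ and right multiplication by $1\otimes b$ are inert, requiring no $\tau$ thanks to unit preservation \eqref{diag:tau-units}; the informative specializations are left multiplication by $1\otimes b$ and right multiplication by $a\otimes 1$, in which a factor of $B$ must be moved past a factor of $A$ through $\tau$. Feeding a pure $A$-tensor and a pure $B$-tensor into the remaining slots and collapsing the inert strand by \eqref{diag:tau-units} reduces the bimodule condition to \eqref{diag:tau-commutes-sectionA} in one case and to \eqref{diag:tau-commutes-sectionB} in the other. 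The main obstacle throughout is purely combinatorial: organizing the large interleaved diagram of $\tau$, $\tau^{-1}$, $\nabla_A$, $\nabla_B$, $\Gamma_A$, $\Gamma_B$ so that the two hypothesized diagrams appear as the only nonautomatic cells, and checking in the converse that the unit specialization strips the $A$- and $B$-strands cleanly, leaving no residual cross terms.
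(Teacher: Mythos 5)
Your reduction and your forward implication are sound and essentially reproduce the paper's argument: the paper likewise first checks, unconditionally, that $\Gamma_{A\otimes_{\tau} B}$ is a right inverse of $\nabla_{A\otimes_{\tau} B}$ (the $\tau\tau^{-1}$ and $\nabla_A\Gamma_A$, $\nabla_B\Gamma_B$ cancellations), and then expands the bimodule-morphism diagram into a pasting of cells, tracked with \fbox{?}, in which every cell commutes by Lemma~\ref{lemm:twist-multiplication-compatibility}, by $\tau\tau^{-1}$-cancellation, or by separability of $A$ and $B$, except for literal instances of \eqref{diag:tau-commutes-sectionA} and \eqref{diag:tau-commutes-sectionB}. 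One structural difference: the paper never argues the converse separately; each of its reductions is stated as an equivalence, with the \fbox{?} cell of one diagram becoming the outer boundary of the next, so both implications are carried by a single chain.

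The genuine gap is in your converse. You claim that after specializing to left multiplication by $1\otimes b$ against $y = a\otimes 1$, "collapsing the inert strand by \eqref{diag:tau-units}" yields \eqref{diag:tau-commutes-sectionA}. But the spare strand is not inert: $\Gamma_{A\otimes_{\tau} B}(a\otimes 1) = (1\otimes\tau^{-1}\otimes 1)\bigl(\Gamma_A(a)\otimes\Gamma_B(1)\bigr)$, and $\Gamma_B(1)$ is the separability idempotent of $B$, not $1\otimes 1$; worse, its legs are entangled with those of $\Gamma_A(a)$ through $\tau^{-1}$. Diagram \eqref{diag:tau-units} controls $\tau$ on units and says nothing about $\Gamma_B(1)$, so precisely the residual cross term you flagged as the main obstacle survives, and the step fails as written. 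It is repairable: post-compose the bimodule identity $\Gamma_{A\otimes_{\tau} B}\bigl((1\otimes b)(a\otimes 1)\bigr) = (1\otimes b)\cdot\Gamma_{A\otimes_{\tau} B}(a\otimes 1)$ with the map $(1\otimes 1\otimes\nabla_B)(1\otimes\tau\otimes 1) : (A\otimes B)\otimes(A\otimes B)\to A\otimes A\otimes B$. On the left-hand side the inserted $\tau$ cancels the $\tau^{-1}$ and $\nabla_B\Gamma_B = 1$ removes $\Gamma_B$ entirely, producing $(\Gamma_A\otimes 1)\tau(b\otimes a)$; on the right-hand side, \eqref{diag:tau-commutes-mB} lets the two legs of $\Gamma_B(1)$ recombine so that $\nabla_B\Gamma_B(1) = 1$ eliminates the idempotent, producing $(1\otimes\tau)(\tau\otimes 1)(1\otimes\Gamma_A)(b\otimes a)$. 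The resulting equality is exactly \eqref{diag:tau-commutes-sectionA}, and \eqref{diag:tau-commutes-sectionB} follows symmetrically using $(\nabla_A\otimes 1\otimes 1)(1\otimes\tau\otimes 1)$ and \eqref{diag:tau-commutes-mA}. So your specializations are the right ones, but the cleanup must go through multiplication and the right-inverse identity, not through \eqref{diag:tau-units}.
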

\begin{proof}
Clearly $(1\otimes \tau^{-1} \otimes 1)(\Gamma_A \otimes\Gamma_B)$ is a right inverse of $\nabla_{A\otimes_{\tau} B}$ since the diagram
\begin{center}
\begin{tikzcd}
A B \arrow{r}{\Gamma_{A} 1} \arrow[swap]{rd}{1} & AAB \arrow{r}{1 \Gamma_{B}} \arrow{rd}{1} \arrow{d}{\nabla_{A} 1} & AABB \arrow{r}{1\tau^{-1} 1} \arrow{rd}{1} \arrow{d}{11\nabla_{B}} & ABAB \arrow{d}{1 \tau 1} \\
 & AB & AAB \arrow{l}{\nabla_{A} 1} & AABB \arrow{l}{11\nabla_{B}}
\end{tikzcd}
\end{center}
commutes. Now $\Gamma_{A\otimes_{\tau} B}$ is an $A\otimes_{\tau} B$ bimodule morphism if and only if the diagram
\begin{center}
\begin{tikzcd}
AB AB AB \arrow{rr}{11\Gamma_{A} \Gamma_{B} 11} \arrow[swap]{d}{1\tau 111} & & ABAABBAB \arrow{rr}{111\tau^{-1} 111} \arrow{dr}{1\tau 11\tau 1} & & ABABABAB \arrow[swap]{dd}{1\tau 11\tau 1}\\
AABBAB \arrow[swap]{d}{\nabla_A \nabla_B 11} & & AAABABBB \arrow[swap]{d}{111\tau 111} & AABABABB \arrow[swap]{l}{11\tau \tau 11} \arrow{rd}{111\tau^{-1} 111} & \\
ABAB \arrow[swap]{d}{1\tau 1} \arrow[draw = none]{rr}{\fbox{?}} & & AAAABBBB \arrow{rr}{11\tau^{-1}_{2,2} 11} \arrow[swap]{ddd}{\nabla_A \nabla_A \nabla_B \nabla_B} \arrow{dr}{\nabla_A 1111 \nabla_B} & & AABBAABB \arrow[swap]{ddd}{\nabla_A \nabla_B \nabla_A \nabla_B} \arrow[swap, near start]{ddl}{\nabla_A 1111 \nabla_B}\\
AABB \arrow[swap]{dd}{\nabla_A \nabla_B} & & & AAABBB \arrow[near start]{d}{1\tau^{-1}_{2,2}1} \arrow[swap]{ddl}{1 \nabla_A \nabla_B 1} & \\
 & & & ABBAAB \arrow{dr}{1 \nabla_B \nabla_A 1} & \\
AB \arrow{rr}{\Gamma_A \Gamma_B} & & AABB \arrow{rr}{1\tau^{-1} 1} & & ABAB
\end{tikzcd}
\end{center}
commutes, if and only if the diagram
\begin{center}
\begin{tikzcd}
AB AB AB \arrow{rr}{11\Gamma_{A} 111} \arrow[swap]{dd}{1\tau 111} \arrow{ddrr}{111\tau 1} & & ABAABAB \arrow{rr}{1111\Gamma_{B} 11} \arrow{ddr}{1111\tau 1} & & ABAABBAB \arrow[swap]{d}{11111\tau 1}\\
 & & & & ABAABABB \arrow[swap]{d}{1111\tau 11} \\
AABBAB \arrow[swap]{dd}{\nabla_A \nabla_B 11} \arrow{ddrr}{111\tau 1} & & ABAABB \arrow{r}{11 \Gamma_A 111} \arrow{dd}{1\tau 111} & ABAAABB \arrow{d}{1\tau 1111} \arrow{r}{11111 \Gamma_B 1} & ABAAABBB \arrow[swap]{d}{1\tau 11111}\\
 & & & AABAABB \arrow{d}{11\tau 111} \arrow{r}{11111 \Gamma_B 1} & AABAABBB \arrow[swap]{d}{11\tau 1111} \\
ABAB \arrow[swap]{d}{1\tau 1} & & AABABB \arrow{r}{1 \Gamma_A 1111} \arrow{d}{11\tau 11} & AAABABB \arrow{r}{11111 \Gamma_B 1} & AAABABBB \arrow[swap]{d}{111\tau 111} \\
AABB \arrow[swap]{d}{\nabla_A \nabla_B} \arrow[draw = none]{rr}{\fbox{?}} & & AAABBB \arrow{rr}{1\Gamma_A 11 \Gamma_B 1} & & AAAABBBB \arrow[swap]{d}{\nabla_A \nabla_A \nabla_B \nabla_B} \\
AB \arrow{rrrr}{\Gamma_A \Gamma_B} & & & & AABB
\end{tikzcd}
\end{center}
commutes, if and only if the diagram
\begin{center}
\begin{tikzcd}
AABBAB \arrow[swap]{d}{\nabla_A 1111} \arrow{r}{111 \tau 1} & AABABB \arrow{rr}{11 \tau 11} \arrow{d}{\nabla_A 1111} & & AAABBB \arrow{dd}{1\Gamma_A 11 \Gamma_B 1} \arrow{dl}{\nabla_A 1111} \\
ABBAB \arrow[swap]{d}{1 \nabla_B 11} \arrow{r}{11 \tau 1} & ABABB \arrow{r}{1 \tau 11} & AABBB \arrow[swap]{dl}{11 \nabla_B 1} & \\
ABAB \arrow{r}{1 \tau 1} & AABB \arrow{d}{\nabla_A 11} \arrow{r}{11 \nabla_B} & AAB \arrow{d}{\nabla_A 1} \arrow[draw = none]{r}{\fbox{?}} & AAAABBBB \arrow{d}{\nabla_A \nabla_A \nabla_B \nabla_B} \\
 & ABB \arrow{r}{1 \nabla_B} & AB \arrow{r}{\Gamma_A \Gamma_B} & AABB
\end{tikzcd}
\end{center}
commutes, if and only if the diagram
\begin{center}
\begin{tikzcd}
AAABBB \arrow[swap]{dd}{1111\Gamma_B 1} \arrow{r}{\nabla_A 1111} \arrow{dr}{1111 \nabla_B} & AABBB \arrow{r}{11 \nabla_B 1} \arrow{dr}{111 \nabla_B} & AABB \arrow{r}{11 \nabla_B} & AAB \arrow{dd}{\nabla_A 1}\\
 & AAABB \arrow{r}{\nabla_A 111} \arrow{dr}{111 \nabla_B} & AABB \arrow{ur}{11 \nabla_B} & \\
AAABBBB \arrow{r}{111 \nabla_B \nabla_B} \arrow[swap]{dd}{1 \Gamma_A 11111} & AAABB \arrow{d}{1\Gamma_A 111} & AAAB \arrow[swap]{uur}{\nabla_A 11} \arrow[swap]{l}{111 \Gamma_B} \arrow{d}{1\Gamma_A 11} & AB \arrow{d}{\Gamma_A 1}\\
 & AAAABB \arrow{drr}{\nabla_A \nabla_A 11} & AAAAB \arrow[swap]{l}{1111 \Gamma_B} \arrow{r}{\nabla_A \nabla_A 11} & AAB \arrow{d}{11 \Gamma_B}\\
AAAABBBB \arrow{ur}{1111 \nabla_B \nabla_B} \arrow{rrr}{\nabla_A \nabla_A \nabla_B \nabla_B} & & & AABB
\end{tikzcd}
\end{center}
commutes. This finishes the proof.
\end{proof}

\section{Twisted tensor product of bialgebras}\label{sec:ttp-bialgebras}

In this section, we determine when a twisted tensor product of bialgebras inherits a bialgebra structure from the twisting map. As an application, we obtain that twisted tensor products of Hopf algebras inherit the Hopf algebra structure if and only if the twisting map is trivial.

Let $(A,\nabla_A,\eta_A,\Delta_A,\epsilon_A)$ and $(B,\nabla_B,\eta_B,\Delta_B,\epsilon_B)$ be $k$-bialgebras for the remainder of this section, and let $\tau:B\otimes A\rightarrow A\otimes B$ be a twisting map. Now $(A\otimes_{\tau} B,\nabla_{A\otimes_{\tau} B},\eta_{A\otimes_{\tau} B})$ is a unital associative algebra by Proposition~\ref{prop:AtB-unital-associative}, but $(A\otimes_{\tau} B,\Delta_{A\otimes_{\tau} B},\epsilon_{A\otimes_{\tau} B})$ is not necessarily a counital coassociative coalgebra. The compatibility of $\tau$ with the counit $\epsilon_B$ is established by the commutative diagram
\begin{center}
\begin{tikzcd}
AB \arrow[swap]{dd}{1 \epsilon_B} & & & B A \arrow{dd}{\epsilon_B 1} \arrow{lll}{\tau} \\
 & Ak \arrow{r}{\cong} \arrow{ul}{1\eta_B} \arrow{dl}{11} & kA \arrow{ur}{\eta_B 1} \arrow{dr}{11} & \\
Ak \arrow{rrr}{\cong} & & & kA
\end{tikzcd}
\end{center}
and the compatibility of $\tau$ with the counit $\epsilon_A$ follows similarly. Hence diagrams \eqref{diag:tau-counits} commute so by Proposition~\ref{prop:AtB-counital-coassociative} the only condition imposing a restriction on $\tau$ to obtain the desired coalgebra structure is its compatibility with the comultiplications $\Delta_B$ and $\Delta_A$. The following examples show this cannot be guaranteed in general.

\begin{exam}[Jordan plane]\label{exam:kx-twist-not-cotwist-Jordan}
Let $k[x]$ and $k[y]$ be polynomial rings in one variable with the bialgebra structure of Example~\ref{exam:bialgebra}. Consider the twisting map $\tau:k[y]\otimes k[x]\rightarrow k[x]\otimes k[y]$ given by linearly extending $\tau(y\otimes x) = x\otimes y + x^2\otimes 1$. Observe that
\begin{equation*}
k[x]\otimes_{\tau}k[y]\cong k\langle x,y\rangle/(x y - y x + x^2)
\end{equation*}
is the Jordan plane. The twisting map is not compatible with the comultiplications, namely diagram \eqref{diag:tau-comultiplications} does not commute.

This example can be extended effortlessly to $2n\in\N$ variables, obtaining
\begin{equation*}
\frac{k\langle x_1,\dots, x_n,y_1,\dots, y_n\rangle}{\left(x_i x_j - x_j x_i,\, y_i y_j - y_j y_i,\, x_i y_j - y_j x_i + \delta_{i,j} x_i^2 \right)_{i,j\in\{1,\dots,n\}}}.
\end{equation*}
Again, the twisting map is not compatible with the comultiplications.
\end{exam}

\begin{exam}[Weyl algebra]\label{exam:kx-twist-not-cotwist-Weyl}
Let $k[x]$ and $k[y]$ be polynomial rings in one variable with the bialgebra structure of Example~\ref{exam:bialgebra}. Consider the twisting map $\tau:k[y]\otimes k[x]\rightarrow k[x]\otimes k[y]$ given by linearly extending $\tau(y\otimes x) = x\otimes y - 1\otimes 1$. Now
\begin{equation*}
k[x]\otimes_{\tau}k[y]\cong k\langle x,y\rangle/(x y - y x - 1)
\end{equation*}
is the Weyl algebra, and the twisting map is also not compatible with the comultiplications. We can again extend this to
\begin{equation*}
\frac{k\langle x_1,\dots, x_n,y_1,\dots, y_n\rangle}{\left(x_i x_j - x_j x_i,\, y_i y_j - y_j y_i,\, x_i y_j - y_j x_i - \delta_{i,j} \right)_{i,j\in\{1,\dots,n\}}},
\end{equation*}
where once more the twisting map is not compatible with the comultiplications.
\end{exam}

\begin{exam}[Quantum enveloping algebra]\label{exam:kx-twist-not-cotwist-Uqsl2}
Let $q\in k^{\times}$, let $\Uqsl$ be the free $k$-algebra generated by $E$, $F$, $K$, and $K^{-1}$ subject to the relations
\begin{equation*}
KK^{-1} = 1 = K^{-1}K, \; KE = q^2EK, \; KF = q^{-2}FK, \; EF - FE = \frac{K-K^{-1}}{q-q^{-1}}.
\end{equation*}
This is a quantized enveloping algebra whose Borel subalgebra is the free $k$-algebra $\Uqb$ generated by $E$, $K$, and $K^{-1}$, subject to the same relations. Consider the map $\tau: \Uqb \otimes k[F] \to k[F] \otimes \Uqb$ given by extending
\begin{equation*}
\tau(K\otimes F) = q^{-2}F\otimes K,\quad \tau(E\otimes F) = F\otimes E - \frac{1 \otimes K - 1\otimes K^{-1}}{q-q^{-1}}.
\end{equation*}
This gives a twisting map making $k[F]\otimes_\tau \Uqb$ isomorphic to $\Uqsl$ as $k$-algebras. Considering $k[F]$ as a polynomial ring in one variable with the bialgebra structure of Example~\ref{exam:bialgebra}, and $\Uqb$ with its usual coalgebra structure, whose comultiplication is given by extending
\begin{equation*}
\Delta(E) = 1\otimes E + E\otimes K, \qquad \Delta(K) = K\otimes K,
\end{equation*}
then the twisting map is not compatible with the comultiplications.
\end{exam}

Consequently, we have to require that $\tau$ makes diagram \eqref{diag:tau-comultiplications} commute. This requirement is equivalent to $(A\otimes_{\tau} B,\Delta_{A\otimes_{\tau} B},\epsilon_{A\otimes_{\tau} B})$ being a counital coassociative coalgebra. Independently of this, we always have the commutative diagrams
\begin{center}
\begin{equation*}
\begin{tikzcd}
ABAB \arrow[swap]{d}{1 \epsilon_B 1 \epsilon_B} \arrow{r}{1\tau 1} & AABB \arrow{r}{\nabla_A \nabla_B} \arrow{d}{11 \epsilon_B \epsilon_B} & AB \arrow{dd}{\epsilon_A \epsilon_B} \\
AkAk \arrow[swap]{d}{\epsilon_A 1 \epsilon_A 1} \arrow{r}{\cong} & AAkk \arrow{d}{\epsilon_A \epsilon_A 11} & \\
kkkk \arrow{r}{1111} & kkkk \arrow{r}{\cong} & kk
\end{tikzcd} \quad
\begin{tikzcd}
kk \arrow{r}{\cong} \arrow[swap]{dd}{\eta_A\eta_B} & kkkk \arrow{r}{1111} \arrow{d}{\eta_A\eta_A11} & kkkk \arrow{d}{\eta_A1\eta_A1}\\
& AAkk \arrow{r}{\cong} \arrow{d}{11\eta_B\eta_B} & AkAk \arrow{d}{1\eta_B1\eta_B} \\
AB \arrow{r}{\Delta_A\Delta_B} & AABB \arrow{r}{1\tau^{-1}1} & ABAB
\end{tikzcd}
\end{equation*}
\end{center}
yielding the compatibility of $\nabla_{A\otimes_{\tau} B}$ with $\epsilon_{A\otimes_{\tau} B}$ and the compatibility of $\Delta_{A\otimes_{\tau} B}$ with $\eta_{A\otimes_{\tau} B}$. Moreover the commutative diagram
\begin{center}
\begin{tikzcd}
kk \arrow{r}{\eta_A 1} \arrow{d}{11} & Ak \arrow{r}{1 \eta_B} \arrow{d}{11} & AB \arrow{d}{11} \\
kk & Ak \arrow[swap]{l}{\epsilon_A 1} & AB \arrow[swap]{l}{1 \epsilon_B}
\end{tikzcd}
\end{center}
establishes the compatibility of $\eta_{A\otimes_{\tau} B}$ with $\epsilon_{A\otimes_{\tau} B}$. However, this still does not guarantee that $A\otimes_{\tau} B$ is a bialgebra.

\begin{exam}[Quantum plane or quantum affine space]\label{exam:kx-twist-not-bialg-quantum-plane}
Let $k[x]$ and $k[y]$ be polynomial rings in one variable with the bialgebra structure of Example~\ref{exam:bialgebra}. Consider the twisting map $\tau:k[y]\otimes k[x]\rightarrow k[x]\otimes k[y]$ given by linearly extending $\tau(y\otimes x) = q x\otimes y$ for some non-zero $q\in k$. Then
\begin{equation*}
k[x]\otimes_{\tau}k[y]\cong k_q[x,y] = k\langle x,y\rangle/(q x y - y x)
\end{equation*}
is the quantum plane. The twisting map $\tau$ is compatible with the comultiplications of $k[x]$ and $k[y]$, namely diagram \eqref{diag:tau-comultiplications} commutes. However, $k[x]\otimes_{\tau}k[y]$ is not a bialgebra. Following Examples~\ref{exam:kx-twist-not-cotwist-Jordan} and~\ref{exam:kx-twist-not-cotwist-Weyl}, we can extend this to $n\in\mathbb{N}$ variables. Given $\mathbf{q} = (q_{ij}) \in M_{n}(k^{\times})$ a square matrix with non-zero entries such that $q_{ii} = 1$ and $q_{ij} q_{ji} = 1$ for all $1\leq i,j\leq n$, set
\begin{equation*}
k_{\mathbf{q}}[x_1,\dots,x_n] = \frac{k\langle x_1,\dots, x_n\rangle}{\left(x_i x_j - q_{ij} x_j x_i \right)_{i,j\in\{1,\dots,n\}}}.
\end{equation*}
Again, the twisting maps are compatible with the comultiplications, but the induced algebra and coalgebra structures on $k_{\mathbf{q}}[x_1,\dots,x_n]$ do not give a bialgebra.
\end{exam}

For $A\otimes_{\tau} B$ to be a bialgebra we have to impose the compatibility of the multiplication and comultiplication. Requiring this completely determines $\tau$.

\begin{lemm}\label{lemm:auxiliary-commutative-diagrams-AB}
Let $A$ and $B$ be $k$-bialgebras, let $\tau:B\otimes A\rightarrow A\otimes B$ be a twisting map. Then the following diagrams commute.
\begin{center}
\begin{tikzcd}
AkkB \arrow{r}{1111} \arrow[swap]{d}{1 \eta_B \eta_A 1} & AkkB \arrow[swap]{d}{1 \eta_A \eta_B 1}\\
ABAB \arrow{r}{1\tau 1} & AABB
\end{tikzcd} \quad
\begin{tikzcd}
AAkkkkBB \arrow{rr}{\cong} \arrow[swap]{d}{11 \eta_B \eta_B \eta_A \eta_A 11} & & AkAkkBkB \arrow{d}{1 \eta_B 1\eta_B \eta_A 1\eta_A 1}\\
AABBAABB \arrow{rr}{1\tau^{-1} 11\tau^{-1} 1} & & ABABABAB
\end{tikzcd}
\end{center}
\begin{center}
\begin{tikzcd}
AABB \arrow{r}{\sigma_{23}} \arrow[swap]{d}{\cong} & ABAB \arrow{d}{\cong}\\
AkAkkBkB \arrow[swap]{d}{1 \eta_B 1\eta_B \eta_A 1\eta_A 1} & AkkBAkkB \arrow{d}{1 \eta_B \eta_A 11 \eta_B \eta_A 1}\\
ABABABAB \arrow{r}{\sigma_{35}\circ \sigma_{46}} & ABABABAB
\end{tikzcd}
\end{center}
\end{lemm}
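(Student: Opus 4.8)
The plan is to fill each of the three squares by elementary sub-squares, each commuting either by naturality of the coherence isomorphisms of the (braided) monoidal category or by the unit-compatibility of $\tau$ recorded in \eqref{diag:tau-units}. The single structural input I would isolate first is that $\tau$, and therefore $\tau^{-1}$, preserves units: the two squares of \eqref{diag:tau-units} say exactly that $\tau(1_B\otimes a) = a\otimes 1_B$ and $\tau(b\otimes 1_A) = 1_A\otimes b$, whence $\tau^{-1}(a\otimes 1_B) = 1_B\otimes a$ and $\tau^{-1}(1_A\otimes b) = b\otimes 1_A$ by bijectivity. In the monoidal setting these are the composites through the two squares of \eqref{diag:tau-units}, and I would phrase the whole argument at that level, with element notation only guiding the diagram filling.

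For the first square, I would factor the composite $\tau\circ(\eta_B\otimes\eta_A)\colon k\otimes k\to A\otimes B$ through the two squares of \eqref{diag:tau-units}, which gives $\tau\circ(\eta_B\otimes\eta_A) = \eta_A\otimes\eta_B$ after the identification $k\otimes k\cong k$. Tensoring on the outside with the identity on the remaining $A$ and $B$ factors converts this into precisely the equality of the two legs (the top arrow being the identity), so the square commutes.

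The second square is the same statement applied to $\tau^{-1}$ in two disjoint slots, with the top rearrangement isomorphism $\cong$ merely repositioning the unit factors. Along the right-hand leg the units $\eta_A,\eta_B$ are inserted directly, while along the bottom leg $\tau^{-1}$ is applied to factors of the form $a\otimes 1_B$ and $1_A\otimes b$, which by unit-compatibility equal $1_B\otimes a$ and $b\otimes 1_A$ respectively. Matching the two legs up to the top isomorphism then follows from \eqref{diag:tau-units} together with naturality of the rearrangement isomorphism.

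The third square contains no twisting map: both columns embed their source into $A\otimes B\otimes A\otimes B\otimes A\otimes B\otimes A\otimes B$ by inserting unit elements in prescribed positions, and the content is that $\sigma_{35}\circ\sigma_{46}$ transports the left-hand embedding onto the right-hand one while $\sigma_{23}$ performs the matching transposition of the genuine $A$ and $B$ factors on top. I would verify this purely from naturality of the symmetric braiding and coherence of the monoidal unit, an element chase tracking the eight tensor slots making the same check transparent. I do not anticipate a genuine obstacle in any of the three squares; the only real care needed is the bookkeeping of the eight tensor positions, since every commutativity is forced by naturality of the coherence isomorphisms and the single property \eqref{diag:tau-units}.
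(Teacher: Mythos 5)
Your proposal is correct and takes essentially the same approach as the paper: each square is filled by naturality and coherence of the monoidal structure together with the unit-compatibility \eqref{diag:tau-units} of $\tau$ (and its inverted form $\tau^{-1}(a\otimes 1_B)=1_B\otimes a$, $\tau^{-1}(1_A\otimes b)=b\otimes 1_A$ for the second square), which is exactly how the paper's proof decomposes its three explicit composite diagrams. The only difference is presentational, in that the paper writes out the intermediate commutative diagrams that your prose describes.
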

\begin{proof}
The diagram
\begin{center}
\begin{tikzcd}
AkkB \arrow{r}{1\eta_B 11} \arrow{d}{1111} & ABkB \arrow{r}{11\eta_A 1} \arrow{d}{\cong} & ABAB \arrow{d}{1\tau 1}\\
AkkB \arrow{r}{11\eta_B 1} & AkBB \arrow{r}{1\eta_A 11} & AABB
\end{tikzcd}
\end{center}
commutes. The diagram 
\begin{center}
\begin{tikzcd}
AAkkkkBB \arrow{rr}{111\eta_B\eta_A 111} \arrow{d}{\cong} & & AAkBAkBB \arrow{d}{\cong} \arrow{rr}{11\eta_B 11\eta_A 11} & & AABBAABB \arrow{d}{1\tau^{-1} 11\tau^{-1} 1}\\
AkAkkBkB \arrow{rr}{111\eta_B\eta_A 111} & & AkABABkB \arrow{rr}{1\eta_B 1111\eta_A 1} & & ABABABAB
\end{tikzcd}
\end{center}
commutes,  and the following diagram commutes.
\begin{center}
\begin{tikzcd}
AABB \arrow{r}{\cong} \arrow{d}{\sigma_{23}} & AkAkkBkB \arrow{rr}{1\eta_B 1111 \eta_A 1} \arrow{d}{\sigma_{35}\circ\sigma_{46}} & & ABAkkBAB \arrow{rr}{111\eta_B \eta_A 111} \arrow{d}{\sigma_{35}\circ\sigma_{46}} & & ABABABAB \arrow{d}{\sigma_{35}\circ \sigma_{46}}\\
ABAB \arrow{r}{\cong} & AkkBAkkB \arrow{rr}{1\eta_B 1111 \eta_A 1} & & ABkBAkAB \arrow{rr}{11\eta_A 11 \eta_B 11} & & ABABABAB
\end{tikzcd}
\end{center}
\end{proof}

\begin{lemm}\label{lemm:auxiliary-commutative-diagrams-BA}
Let $A$ and $B$ be $k$-bialgebras, let $\tau:B\otimes A\rightarrow A\otimes B$ be a twisting map. Then the following diagrams commute.
\begin{center}
\begin{tikzcd}
BBAA \arrow{r}{\sigma_{23}} \arrow[swap]{d}{\cong} & BABA \arrow{d}{\cong}\\
kBkBAkAk \arrow[swap]{d}{\eta_A 1\eta_A 11 \eta_B 1\eta_B} & kBAkkBAk \arrow[swap]{d}{\eta_A 11 \eta_B \eta_A 11 \eta_B}\\
ABABABAB \arrow{r}{\sigma_{35}\circ \sigma_{46}} & ABABABAB
\end{tikzcd}\quad
\begin{tikzcd}
BBAA \arrow{rr}{1111} \arrow[swap]{d}{\cong} & & BBAA \arrow{d}{\cong}\\
kkBBAAkk \arrow{rr}{\cong} \arrow{d}{\eta_A \eta_A 1111 \eta_B\eta_B} & & kBkBAkAk \arrow[swap]{d}{\eta_A 1 \eta_A 11 \eta_B 1 \eta_B}\\
AABBAABB \arrow{rr}{1\tau^{-1} 11\tau^{-1} 1} & & ABABABAB
\end{tikzcd}
\end{center}
\begin{center}
\begin{tikzcd}
kBAk \arrow{r}{1\tau 1} \arrow[swap]{dd}{\eta_A 11 \eta_B} & kABk \arrow{d}{\cong}\\
 & AB\\
ABAB \arrow{r}{1\tau 1} & AABB \arrow[swap]{u}{\nabla_A \nabla_B}
\end{tikzcd} \quad
\end{center}
\end{lemm}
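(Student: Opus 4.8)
The plan is to observe that, despite the bialgebra hypothesis, none of the three diagrams involves the comultiplications $\Delta_A,\Delta_B$ or the counits $\epsilon_A,\epsilon_B$: each is built solely from the units $\eta_A,\eta_B$, the multiplications $\nabla_A,\nabla_B$, the twisting map $\tau$ (or $\tau^{-1}$), coordinate swaps $\sigma_{ij}$, and the structural isomorphisms inserting or deleting copies of $k$. The single nontrivial input is the unit compatibility of $\tau$, diagram~\eqref{diag:tau-units}, which reads $\tau(b\otimes\eta_A)=\eta_A\otimes b$ and $\tau(\eta_B\otimes a)=a\otimes\eta_B$; inverting these gives $\tau^{-1}(\eta_A\otimes b)=b\otimes\eta_A$ and $\tau^{-1}(a\otimes\eta_B)=\eta_B\otimes a$. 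Thus $\tau$ and $\tau^{-1}$ act as mere structural isomorphisms on any tensor slot occupied by a unit, and the whole lemma should reduce to the coherence of the (braided) monoidal structure together with the unit axioms of $A$ and $B$. I would therefore prove each diagram by subdividing it into small regions, each of which is either a naturality square for a swap or a unit-insertion isomorphism, an instance of~\eqref{diag:tau-units}, or an instance of the unit axiom for $\nabla_A$ or $\nabla_B$; this keeps the argument valid in an arbitrary monoidal category, where element chasing is unavailable.

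For the first diagram, $\tau$ does not appear at all, so commutativity is pure coherence: both composites insert the same four units ($\eta_A$ twice, $\eta_B$ twice) into $B\otimes B\otimes A\otimes A$ and reshuffle coordinates, the right path transposing the two middle factors via $\sigma_{23}$ before insertion and the left path permuting via $\sigma_{35}\circ\sigma_{46}$ after insertion. I would check that these agree by filling the square with a grid of naturality squares for the transpositions against the inserted copies of $k$; no algebraic input beyond the coherence of the monoidal structure is needed.

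For the second diagram, the only genuine occurrences of $\tau^{-1}$ in the bottom map $1\otimes\tau^{-1}\otimes 1\otimes 1\otimes\tau^{-1}\otimes 1$ act on slots of the form $\eta_A\otimes B$ and $A\otimes\eta_B$. By~\eqref{diag:tau-units} these collapse to the structural unit isomorphisms, after which the bottom square becomes a coherence square identical in spirit to the first diagram; the upper triangle, mediated by the intermediate object $k\otimes k\otimes B\otimes B\otimes A\otimes A\otimes k\otimes k$ and its isomorphism to $k\otimes B\otimes k\otimes B\otimes A\otimes k\otimes A\otimes k$, is again coherence. I would subdivide into naturality squares so that the use of~\eqref{diag:tau-units} is localized to the two slots where it is actually needed.

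The third diagram is the one place where $\tau$ acts on a genuine factor $B\otimes A$ rather than on a unit slot, so it cannot be reduced to coherence alone. Here the argument is that after $1\otimes\tau\otimes 1$ sends $\eta_A\otimes b\otimes a\otimes\eta_B$ to $\eta_A\otimes\tau(b\otimes a)\otimes\eta_B$, the final $\nabla_A\otimes\nabla_B$ absorbs the inserted units through the left unit axiom $\nabla_A(\eta_A\otimes -)\cong 1$ and the right unit axiom $\nabla_B(-\otimes\eta_B)\cong 1$, leaving exactly $\tau(b\otimes a)$; this matches the top path, which applies $1\otimes\tau\otimes 1$ and then strips the two copies of $k$. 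I expect the main obstacle throughout to be organizational rather than conceptual: the diagrams carry up to eight tensor factors, so the real work is bookkeeping the type ($A$, $B$, or $k$) and position of each factor through the unit insertions, the transpositions, and the two applications of~\eqref{diag:tau-units}, and invoking the correct instance of each axiom in each region. Decomposing every square into the three elementary kinds of region above is what makes this manageable and, importantly, categorical.
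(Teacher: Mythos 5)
Your proposal is correct and takes essentially the same route as the paper: the paper's proof likewise verifies each diagram by subdividing it into coherence/naturality squares for the unit insertions and swaps, localized instances of diagram~\eqref{diag:tau-units} (inverted, so that $\tau^{-1}$ acts trivially on the slots occupied by $\eta_A$ or $\eta_B$ in the second diagram), and bifunctoriality of $\otimes$ together with the unit axioms of $\nabla_A$ and $\nabla_B$ for the third. Your three structural observations --- first diagram pure coherence, second diagram coherence plus two unit-slot applications of~\eqref{diag:tau-units}, third diagram a naturality square followed by a unitality triangle --- are exactly the content of the paper's three displayed diagrams.
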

\begin{proof}
The diagram
\begin{center}
\begin{tikzcd}
BBAA \arrow{r}{\cong} \arrow{d}{\sigma_{23}} & kBkBAkAk \arrow{rr}{\eta_A 111111 \eta_B} \arrow{d}{\sigma_{35}\circ\sigma_{46}} & & ABkBAkAB \arrow{rr}{11\eta_A 11\eta_B 11} \arrow{d}{\sigma_{35}\circ\sigma_{46}}  & & ABABABAB \arrow{d}{\sigma_{35}\circ \sigma_{46}}\\
BABA \arrow{r}{\cong} & kBAkkBAk \arrow{rr}{\eta_A 111111 \eta_B} & & ABAkkBAB \arrow{rr}{111 \eta_B \eta_A 111} & & ABABABAB
\end{tikzcd}
\end{center}
commutes. the diagram
\begin{center}
\begin{tikzcd}
kkBBAAkk \arrow{d}{\cong} \arrow{rr}{\eta_A 111111 \eta_B} & & AkBBAAkB \arrow{rr}{1 \eta_A 1111 \eta_B 1} \arrow{d}{\cong} & & AABBAABB \arrow{d}{1\tau^{-1} 11 \tau^{-1} 1} \\
kBkBAkAk \arrow{rr}{\eta_A 111111 \eta_B} & & BBkBAkAB \arrow{rr}{11 \eta_A 11 \eta_B 11} & & ABABABAB
\end{tikzcd}
\end{center}
commutes,  and the following diagram commutes.
\begin{center}
\begin{tikzcd}
kBAk \arrow{r}{1\tau 1} \arrow{d}{\eta_A 11 \eta_B} & kABk \arrow{r}{\cong} \arrow[swap]{d}{\eta_A 11 \eta_B} & AB\\
ABAB \arrow{r}{1\tau 1} & AABB \arrow[swap]{ur}{\nabla_A \nabla_B}
\end{tikzcd}
\end{center}
\end{proof}

\begin{lemm}\label{lemm:cmA-cmB-tau-sigma}
Let $A$ and $B$ be $k$-bialgebras, let $\tau:B\otimes A\rightarrow A\otimes B$ be a twisting map. If $(A\otimes_{\tau} B,\nabla_{A\otimes_{\tau} B},\eta_{A\otimes_{\tau} B},\Delta_{A\otimes_{\tau} B},\epsilon_{A\otimes_{\tau} B})$ is a $k$-bialgebra then the following diagrams commute.
\begin{equation*}
\begin{tikzcd}
AB \arrow{r}{\Delta_A\Delta_B} \arrow{d}[swap]{\Delta_A\Delta_B} & AABB \arrow{d}{1\tau^{-1}1} \\
AABB \arrow{r}[swap]{\sigma_{23}} & ABAB
\end{tikzcd} \qquad
\begin{tikzcd}
BA \arrow{r}{\Delta_B\Delta_A} \arrow{d}[swap]{\Delta_B\Delta_A} & BBAA \arrow{d}{1\tau 1} \\
BBAA \arrow{r}[swap]{\sigma_{23}} & BABA
\end{tikzcd} 
\end{equation*}
\end{lemm}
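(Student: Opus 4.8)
The plan is to extract both identities from the single bialgebra axiom that $\Delta_{A\otimes_\tau B}$ is multiplicative, i.e.\ the compatibility square of Definition~\ref{defi:bialgebra} applied to $A\otimes_\tau B$, by feeding it inputs supported on a single tensor factor. Write $1_A=\eta_A(1)$ and $1_B=\eta_B(1)$. From the unit axioms \eqref{diag:tau-units} one gets the two product identities $(a\otimes 1_B)(1_A\otimes b)=a\otimes b$ and $(1_A\otimes b)(a\otimes 1_B)=\tau(b\otimes a)$ in $A\otimes_\tau B$; and from unitality of $\Delta_A,\Delta_B$ together with the fact that $\tau^{-1}$ carries a unit across trivially (again \eqref{diag:tau-units}) one computes $\Delta_{A\otimes_\tau B}(a\otimes 1_B)=\sum (a_1\otimes 1_B)\otimes(a_2\otimes 1_B)$ and $\Delta_{A\otimes_\tau B}(1_A\otimes b)=\sum (1_A\otimes b_1)\otimes(1_A\otimes b_2)$. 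These four facts are exactly the unit manipulations packaged diagrammatically in Lemmas~\ref{lemm:auxiliary-commutative-diagrams-AB} and \ref{lemm:auxiliary-commutative-diagrams-BA}, which I would invoke rather than redo by hand.

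For the left square, I would apply $\Delta_{A\otimes_\tau B}$ to the factorization $a\otimes b=(a\otimes 1_B)(1_A\otimes b)$. Multiplicativity rewrites the image as $\big[\sum(a_1\otimes 1_B)\otimes(a_2\otimes 1_B)\big]\cdot\big[\sum(1_A\otimes b_1)\otimes(1_A\otimes b_2)\big]$, and multiplying the two tensor legs with the first product identity collapses this to $\sum(a_1\otimes b_1)\otimes(a_2\otimes b_2)=\sigma_{23}(\Delta_A\otimes\Delta_B)(a\otimes b)$. Since by Remark~\ref{rema:coalg-defi} the left-hand side is $(1\otimes\tau^{-1}\otimes 1)(\Delta_A\otimes\Delta_B)(a\otimes b)$, comparing gives the first diagram.

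For the right square, I would instead expand $\Delta_{A\otimes_\tau B}$ on the other product $(1_A\otimes b)(a\otimes 1_B)=\tau(b\otimes a)$. Multiplicativity and the second product identity give $\sum\tau(b_1\otimes a_1)\otimes\tau(b_2\otimes a_2)=(\tau\otimes\tau)\sigma_{23}(\Delta_B\otimes\Delta_A)(b\otimes a)$. On the other hand, evaluating the definition of $\Delta_{A\otimes_\tau B}$ on $\tau(b\otimes a)$ and rewriting $(\Delta_A\otimes\Delta_B)\tau$ via the coalgebra compatibility \eqref{diag:tau-comultiplications} (available because $A\otimes_\tau B$ is in particular a coalgebra, Proposition~\ref{prop:AtB-counital-coassociative}) yields $(\tau\otimes\tau)(1\otimes\tau\otimes 1)(\Delta_B\otimes\Delta_A)(b\otimes a)$, the adjacent $1\otimes\tau^{-1}\otimes 1$ and $1\otimes\tau\otimes 1$ cancelling. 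Equating the two expressions exhibits both sides as $\tau\otimes\tau$ applied to $(1\otimes\tau\otimes 1)(\Delta_B\otimes\Delta_A)$ and to $\sigma_{23}(\Delta_B\otimes\Delta_A)$ respectively; as $\tau$ is bijective so is $\tau\otimes\tau$, and cancelling it leaves $(1\otimes\tau\otimes 1)(\Delta_B\otimes\Delta_A)=\sigma_{23}(\Delta_B\otimes\Delta_A)$, the second diagram.

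I expect the substance to be the faithful diagrammatic translation rather than the logic: one has to build the bialgebra compatibility square for $A\otimes_\tau B$, precompose it with the correct unit-insertion maps, and chase the coordinate permutations $\sigma_{23}$ and $\sigma_{35}\circ\sigma_{46}$ so that every term lands in $ABAB$, respectively $BABA$, with the slots in the intended order; this tensor-slot bookkeeping is precisely what Lemmas~\ref{lemm:auxiliary-commutative-diagrams-AB} and \ref{lemm:auxiliary-commutative-diagrams-BA} are designed to absorb. The only hypothesis genuinely used beyond formal rewriting is the bijectivity of $\tau$, invoked to cancel $\tau\otimes\tau$ in the second diagram.
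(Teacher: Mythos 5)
Your proposal is correct and is essentially the paper's own argument rendered in Sweedler notation: the paper's two large diagrams are precisely the bialgebra compatibility square fed with the inputs $(a\otimes 1_B)\otimes(1_A\otimes b)$ and $(1_A\otimes b)\otimes(a\otimes 1_B)$, with Lemmas~\ref{lemm:auxiliary-commutative-diagrams-AB} and \ref{lemm:auxiliary-commutative-diagrams-BA} absorbing the unit insertions exactly as you invoke them, and the rearranged diagram~\eqref{diag:tau-comultiplications} providing the same $(1\otimes\tau^{-1}\otimes 1)(1\otimes\tau\otimes 1)$ cancellation in the second square. Even your appeal to bijectivity of $\tau$ to cancel $\tau\otimes\tau$ matches the paper, whose chase for the second diagram likewise extracts the inner square by post-composing with the injective map $\tau\otimes\tau$.
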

\begin{proof}
In this proof we slightly abuse notation by omitting the field $k$ to simplify the diagrams. Observe that if $A\otimes_\tau B$ is a bialgebra then it is a counital coassociative coalgebra, so diagram~\eqref{diag:tau-comultiplications} commutes by Proposition~\ref{prop:AtB-counital-coassociative}. The outside of the following diagram commutes because $A\otimes_{\tau} B$ is a $k$-bialgebra. The diagrams indicated by a $\circlearrowleft$ commute by Lemma~\ref{lemm:auxiliary-commutative-diagrams-AB}. Thus, the outer diagrams commute, and so does the central square. 
\begin{center}
\begin{tikzcd}
ABAB \arrow{rr}{1\tau1} \arrow[near end]{dd}{\Delta_A\Delta_B\Delta_A\Delta_B} & \phantom{x} \arrow[phantom,pos=0.2]{dr}{\circlearrowleft} & AABB \arrow{r}{\nabla_A\nabla_B} & AB \arrow{rr}{\Delta_A\Delta_B} & & AABB \arrow[swap]{dddd}{1\tau^{-1}1}\\
 & \phantom{x} & AB \arrow[swap]{r}{\Delta_A\Delta_B} \arrow{d}{\Delta_A\Delta_B} \arrow{ull}{1\eta_B\eta_A1} \arrow{u}{1\eta_A\eta_B 1} \arrow{ur}{11} & AABB \arrow{d}{1\tau^{-1}1} \arrow{urr}{1111} & & \\
AABBAABB \arrow[near start]{dd}{1\tau^{-1}11\tau^{-1}1} \arrow[phantom,pos=0.4]{ddrr}{\circlearrowleft} & \phantom{x} & AABB \arrow{r}{\sigma_{23}} \arrow[swap]{ll}{11\eta_B\eta_B\eta_A\eta_A11} \arrow{ddll}{1\eta_B1\eta_B\eta_A1\eta_A1} \arrow[phantom]{dd}{\circlearrowleft} & ABAB \arrow[near end]{ddl}{1\eta_B\eta_A 11 \eta_B\eta_A 1} \arrow[near end]{dd}{1\eta_A\eta_B11\eta_A\eta_B1} \arrow{ddrr}{1111} & & \phantom{x} \\
 & & & \phantom{x} & \\
ABABABAB \arrow[swap]{rr}{\sigma_{35}\circ \sigma_{46}} & & ABABABAB \arrow{r}[swap]{1\tau11\tau1} & AABBAABB \arrow[swap]{rr}{\nabla_A\nabla_B\nabla_A\nabla_B} & & ABAB\\
\end{tikzcd}
\end{center}

Similarly, the parts of the following diagram indicated by a $\circlearrowleft$ commute by Lemma~\ref{lemm:auxiliary-commutative-diagrams-BA}, and therefore, the inner square commutes.
\begin{center}
\begin{tikzcd}
ABAB \arrow{rr}{1\tau1} \arrow[near end]{dd}{\Delta_A\Delta_B\Delta_A\Delta_B} &  & AABB \arrow{r}{\nabla_A\nabla_B} \arrow[phantom]{d}{\circlearrowleft} & AB \arrow{rr}{\Delta_A\Delta_B} & & AABB \arrow[swap]{ddd}{1\tau^{-1} 1}\\
 & & BA \arrow[swap]{r}{\Delta_B\Delta_A} \arrow{d}{\Delta_B\Delta_A} \arrow{ull}{\eta_A11\eta_B} \arrow{ur}{\tau}  & BBAA \arrow{d}{1\tau1} & & \\
AABBAABB \arrow[near start]{d}{1\tau^{-1}11\tau^{-1}1} & \phantom{x} \arrow[phantom,pos=0.15]{d}{\circlearrowleft}& BBAA \arrow{r}{\sigma_{23}} \arrow[swap]{ll}{\eta_A\eta_A 1111 \eta_B\eta_B} \arrow{dll}{\eta_A1\eta_A11\eta_B1\eta_B} \arrow[phantom,pos=0.3]{d}{\circlearrowleft} & BABA \arrow{dl}{\eta_A11\eta_B\eta_A 11\eta_B} \arrow{drr}{\tau\tau} & & \\
ABABABAB \arrow[swap]{rr}{\sigma_{35}\circ \sigma_{46}} & \phantom{x} & ABABABAB \arrow[swap]{r}{1\tau11\tau1} & AABBAABB \arrow[swap]{rr}{\nabla_A\nabla_B\nabla_A\nabla_B} & & ABAB\\
\end{tikzcd}
\end{center}
\end{proof}

\begin{theo}\label{theo:bialgebra-twist-iff-cotwist}
Let $A$ and $B$ be $k$-bialgebras, let $\tau:B\otimes A\rightarrow A\otimes B$ be a twisting map. Then $(A\otimes_{\tau} B,\nabla_{A\otimes_{\tau} B},\eta_{A\otimes_{\tau} B},\Delta_{A\otimes_{\tau} B},\epsilon_{A\otimes_{\tau} B})$ is a $k$-bialgebra if and only if $\tau$ is trivial.
\end{theo}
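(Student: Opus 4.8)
The plan is to prove both implications, with the forward direction resting almost entirely on Lemma~\ref{lemm:cmA-cmB-tau-sigma} followed by a single collapse using the counits. I will use the second square from that lemma, the one phrased over $B\otimes A$, because it produces $\tau$ directly rather than $\tau^{-1}$.

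For the easy implication, suppose $\tau=\sigma_{12}$. Then $\tau^{-1}=\sigma_{21}$ and one checks immediately that diagram~\eqref{diag:tau-comultiplications} commutes (both legs send $b\otimes a$ to $\sum a_{(1)}\otimes a_{(2)}\otimes b_{(1)}\otimes b_{(2)}$), so $A\otimes_{\tau}B$ is a counital coassociative coalgebra by Proposition~\ref{prop:AtB-counital-coassociative}. With the trivial twist, $\nabla_{A\otimes_{\tau}B}$, $\eta_{A\otimes_{\tau}B}$, $\Delta_{A\otimes_{\tau}B}$, and $\epsilon_{A\otimes_{\tau}B}$ are exactly the componentwise operations of the ordinary tensor product $A\otimes B$, whose bialgebra structure is classical. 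So the only thing to record here is that the trivial twist returns the standard tensor product of bialgebras.

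For the forward implication, assume $A\otimes_{\tau}B$ is a $k$-bialgebra. By Lemma~\ref{lemm:cmA-cmB-tau-sigma} the square expressing the equality of maps $B\otimes A\to B\otimes A\otimes B\otimes A$
\[
(1\otimes\tau\otimes 1)(\Delta_B\otimes\Delta_A)=\sigma_{23}(\Delta_B\otimes\Delta_A)
\]
commutes. I would post-compose both legs with $\epsilon_B\otimes 1\otimes 1\otimes\epsilon_A:B\otimes A\otimes B\otimes A\to A\otimes B$. On the left leg, the outer $\epsilon_B$ and $\epsilon_A$ act on the two tensor strands untouched by $\tau$, so they slide past $1\otimes\tau\otimes 1$; what remains is $\tau\circ(\epsilon_B\otimes 1_B\otimes 1_A\otimes\epsilon_A)(\Delta_B\otimes\Delta_A)$, and the left counit axiom for $B$ together with the right counit axiom for $A$ collapse $(\epsilon_B\otimes 1_B\otimes 1_A\otimes\epsilon_A)(\Delta_B\otimes\Delta_A)$ to $\mathrm{id}_{B\otimes A}$, leaving exactly $\tau$. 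On the right leg, $\sigma_{23}$ decouples the two strands, and the same two counit axioms collapse the composite to the bare swap $\sigma_{12}:B\otimes A\to A\otimes B$, $b\otimes a\mapsto a\otimes b$. Equating the two gives $\tau=\sigma_{12}$, i.e.\ $\tau$ is trivial.

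The genuine content---that $A\otimes_{\tau}B$ being a bialgebra forces those squares to commute---is already carried by Lemma~\ref{lemm:cmA-cmB-tau-sigma} and, behind it, the auxiliary Lemmas~\ref{lemm:auxiliary-commutative-diagrams-AB} and \ref{lemm:auxiliary-commutative-diagrams-BA}. So the theorem's own proof reduces to the counit collapse just described. I do not expect any real obstacle here; the only care required is the diagrammatic bookkeeping of which counit ($\epsilon_A$ versus $\epsilon_B$) is fed into which strand, and of the fact that $\epsilon_B$ hits the factor where the \emph{left} counit axiom applies while $\epsilon_A$ hits the factor where the \emph{right} counit axiom applies, so that both collapse cleanly to the identity.
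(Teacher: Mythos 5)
Your proof is correct and takes essentially the same route as the paper: both directions rest on Lemma~\ref{lemm:cmA-cmB-tau-sigma}, and your post-composition with $\epsilon_B\otimes 1\otimes 1\otimes\epsilon_A$ (sliding the counits past the middle $\tau$ and invoking the counit axioms on $b_{(1)}$ and $a_{(2)}$) is exactly the counit collapse that the paper carries out diagrammatically to force the central square $\tau=\sigma_{12}$ to commute. If anything your version is slightly leaner, since it needs only the second square of the lemma rather than both squares and the auxiliary $\tau\tau$--$\sigma_{23}$ compatibility diagram.
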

\begin{proof}
Suppose that $\tau$ is trivial, then $A\otimes_\tau B \cong A\otimes B$ and the claim is clear. Suppose that $A\otimes_\tau B$ is a bialgebra, in particular it is a counital coassociative coalgebra, so diagram~\eqref{diag:tau-comultiplications} commutes by Proposition~\ref{prop:AtB-counital-coassociative}. Note then that the following diagram commutes.
\begin{center}
\begin{tikzcd}
BABA \arrow{rrr}{\tau\tau} \arrow[swap]{d}{\sigma_{23}} \arrow{dr}{111\epsilon_A} & & & ABAB \arrow{r}{\sigma_{23}} \arrow{d}{11\epsilon_A 1}  & AABB \arrow{d}{1\epsilon_A11} \\
BBAA \arrow[swap]{d}{111\epsilon_A} & BABk \arrow{r}{\tau 11} \arrow{dr}{\epsilon_B111} & ABBk \arrow{r}{\cong} & ABkB \arrow{r}{\sigma_{23}} \arrow{d}{1\epsilon_B 11} & AkBB \arrow{d}{11\epsilon_B 1}\\
BBAk \arrow{r}{\epsilon_B 111} \arrow[swap]{ur}{\sigma_{23}} & kBAk \arrow{r}{\sigma_{23}} & kABk \arrow{r}{\cong} & AkkB \arrow{r}{1111} & AkkB
\end{tikzcd}
\end{center}
Then, the outside of the following diagram commutes, and the parts of the diagram that commute by Lemma~\ref{lemm:cmA-cmB-tau-sigma} have been marked by a $\circlearrowleft$.
\begin{center}
\begin{tikzcd}
BABA \arrow[phantom]{ddr}{\circlearrowleft} & BBAA \arrow{l}[swap]{1\tau1} \arrow{r}{\tau_{2,2}} & AABB \arrow{r}{1\tau^{-1}1} \arrow[phantom]{ddr}{\circlearrowleft} & ABAB \\
& BA \arrow{u}{\Delta_B\Delta_A} \arrow{r}{\tau} \arrow[swap]{d}{11} & AB \arrow[swap]{u}{\Delta_A\Delta_B} \arrow{d}{11} & \\
BBAA \arrow{uu}{\sigma_{23}}\arrow{d}[swap]{\epsilon_B 11 \epsilon_A} & BA \arrow{l}[swap]{\Delta_B\Delta_A} \arrow[swap]{d}{11} \arrow{r}{\sigma_{12}} & AB \arrow{d}{11} \arrow{r}{\Delta_A\Delta_B}  & AABB \arrow{d}{1\epsilon_A \epsilon_B 1} \arrow{uu}[swap]{\sigma_{23}}\\
kBAk \arrow{r}{\cong} & BA \arrow{r}{\sigma_{12}} & AB \arrow{r}{\cong} & AkkB \\
\end{tikzcd}
\end{center}
Thus, the center square of the above diagram also commutes, proving that $\tau$ is trivial.
\end{proof}

Alas, it is now clear that non-trivial Hopf algebras will not come from twisted tensor products with the counit and comultiplication induced by the twisting map.

\begin{coro}\label{coro:trivial-Hopf}
Let $A$ and $B$ be Hopf algebras, let $\tau:B\otimes A\rightarrow A\otimes B$ be a twisting map such that diagram~\eqref{diag:tau-comultiplications} commutes. If $A\otimes_{\tau} B$ is a Hopf algebra, then it is the tensor product Hopf algebra $A\otimes B$.
\end{coro}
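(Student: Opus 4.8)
The plan is to derive this as an immediate consequence of Theorem~\ref{theo:bialgebra-twist-iff-cotwist}, exploiting that a Hopf algebra is by definition a bialgebra equipped with an antipode. The hypothesis that diagram~\eqref{diag:tau-comultiplications} commutes ensures, via Proposition~\ref{prop:AtB-counital-coassociative}, that $(A\otimes_\tau B,\Delta_{A\otimes_\tau B},\epsilon_{A\otimes_\tau B})$ is a genuine counital coassociative coalgebra, so the statement ``$A\otimes_\tau B$ is a Hopf algebra'' is well posed to begin with.

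First I would observe that if $A\otimes_\tau B$ is a Hopf algebra with respect to the induced structures, then in particular $(A\otimes_\tau B,\nabla_{A\otimes_\tau B},\eta_{A\otimes_\tau B},\Delta_{A\otimes_\tau B},\epsilon_{A\otimes_\tau B})$ is a $k$-bialgebra. Applying Theorem~\ref{theo:bialgebra-twist-iff-cotwist} then forces $\tau$ to be trivial, that is $\tau=\sigma_{12}$. Next I would substitute $\tau=\sigma_{12}$ into the formulas of Definition~\ref{defi:twisted-tensor-product} and Remark~\ref{rema:coalg-defi}. Since $\sigma_{12}$ and $\sigma_{12}^{-1}$ are the flip maps, the induced multiplication, unit, comultiplication, and counit collapse precisely to the standard tensor product (co)algebra structures on $A\otimes B$; hence $A\otimes_\tau B=A\otimes B$ as a bialgebra.

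It remains to identify the Hopf structures, not merely the underlying bialgebra structures. Here I would invoke that the tensor product $A\otimes B$ of two Hopf algebras is a Hopf algebra with antipode $S_A\otimes S_B$, together with the fact that the antipode of a bialgebra, when it exists, is the unique two-sided convolution inverse of the identity in the convolution monoid $\Hom{k}{A\otimes B}{A\otimes B}$. Uniqueness of inverses in a monoid then guarantees that the antipode on $A\otimes_\tau B$ agrees with $S_A\otimes S_B$, so $A\otimes_\tau B$ is exactly the tensor product Hopf algebra $A\otimes B$. I anticipate no serious obstacle, as the entire force of the argument is carried by Theorem~\ref{theo:bialgebra-twist-iff-cotwist}; the only point demanding a moment of care is this last uniqueness step, which is what upgrades the identification from bialgebras to Hopf algebras.
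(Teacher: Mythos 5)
Your proposal is correct and matches the paper's (implicit) argument exactly: the paper states this corollary without proof precisely because it follows immediately from Theorem~\ref{theo:bialgebra-twist-iff-cotwist}, which forces $\tau=\sigma_{12}$ and hence collapses all induced structures to those of the ordinary tensor product. Your added observation that the antipode is the unique convolution inverse of the identity, so the Hopf structure must be $S_A\otimes S_B$, is the right way to make explicit the upgrade from bialgebras to Hopf algebras that the paper leaves tacit.
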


\begin{coro}\label{coro:no-inherited-Hopf}
Let $A$ and $B$ be Hopf algebras, let $\tau:B\otimes A\rightarrow A\otimes B$ be a non-trivial twisting map such that diagram~\eqref{diag:tau-comultiplications} commutes. Then $A\otimes_{\tau} B$ is not a Hopf algebra.
\end{coro}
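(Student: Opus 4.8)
The plan is to reduce the statement immediately to Theorem~\ref{theo:bialgebra-twist-iff-cotwist} via the elementary observation that every Hopf algebra is, after forgetting its antipode, a bialgebra. Concretely, I would argue by contraposition. Suppose $A\otimes_{\tau} B$, equipped with the algebra structure of Definition~\ref{defi:twisted-tensor-product} and the coalgebra structure of Remark~\ref{rema:coalg-defi}, carries a Hopf algebra structure. Forgetting the antipode, it is then a $k$-bialgebra with respect to the tuple $(\nabla_{A\otimes_{\tau} B}, \eta_{A\otimes_{\tau} B}, \Delta_{A\otimes_{\tau} B}, \epsilon_{A\otimes_{\tau} B})$.

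First I would record that the standing hypothesis that diagram~\eqref{diag:tau-comultiplications} commutes is exactly the condition Proposition~\ref{prop:AtB-counital-coassociative} demands for $(A\otimes_{\tau} B, \Delta_{A\otimes_{\tau} B}, \epsilon_{A\otimes_{\tau} B})$ to be a counital coassociative coalgebra; this is what makes the bialgebra compatibility diagrams of Definition~\ref{defi:bialgebra} even meaningful for the induced structure. I would then invoke Theorem~\ref{theo:bialgebra-twist-iff-cotwist}, which asserts that this tuple is a $k$-bialgebra if and only if $\tau$ is trivial. Since $\tau$ is assumed non-trivial, the tuple fails to be a bialgebra, contradicting the conclusion of the previous paragraph. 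Hence $A\otimes_{\tau} B$ cannot be a Hopf algebra.

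There is no genuine obstacle here: the corollary is a direct logical consequence of Theorem~\ref{theo:bialgebra-twist-iff-cotwist} combined with the inclusion of Hopf algebras inside bialgebras. The only point that merits a moment of care is making explicit that ``being a Hopf algebra'' is understood relative to the induced algebra and coalgebra structures (otherwise the assertion would be ill-posed), which is precisely why the hypothesis on diagram~\eqref{diag:tau-comultiplications} is carried over from Corollary~\ref{coro:trivial-Hopf}. I expect the proof to consist of little more than this contrapositive chain.
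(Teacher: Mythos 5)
Your proposal is correct and follows exactly the route the paper intends: the corollary is stated without proof precisely because it is an immediate consequence of Theorem~\ref{theo:bialgebra-twist-iff-cotwist}, since a Hopf algebra is in particular a bialgebra. Your contrapositive phrasing and the remark that the hypothesis on diagram~\eqref{diag:tau-comultiplications} guarantees the coalgebra structure via Proposition~\ref{prop:AtB-counital-coassociative} match the paper's setup verbatim.
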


\section{Twisted tensor product of Frobenius algebras}\label{sec:ttp-Frobenius}

In this section, we show that a twisted tensor product of Frobenius algebras always inherits a Frobenius algebra structure from the twisting map, and we explicitly give the induced pairing and co-pairing. Let $(A,\nabla_A,\eta_A,\Delta_A,\epsilon_A)$ and $(B,\nabla_B,\eta_B,\Delta_B,\epsilon_B)$ be Frobenius algebras over $k$ for the rest of the section.

\begin{theo}\label{theo:ttp-cttp-always-Frobenius}
Let $A$ and $B$ be Frobenius algebras over $k$, let $\tau:B\otimes A\rightarrow A\otimes B$ be a twisting map such that diagram~\eqref{diag:tau-comultiplications} commutes. Then $(A\otimes_{\tau} B,\nabla_{A\otimes_{\tau} B},\eta_{A\otimes_{\tau} B},\Delta_{A\otimes_{\tau} B},\epsilon_{A\otimes_{\tau} B})$ is a Frobenius algebra.
\end{theo}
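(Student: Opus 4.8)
The plan is to prove that $A\otimes_\tau B$ is a Frobenius algebra by verifying the two commutative squares of Definition~\ref{defi:frob-alg-comult} directly, using the explicit formulas $\nabla_{A\otimes_\tau B} = (\nabla_A\otimes\nabla_B)(1\otimes\tau\otimes 1)$ and $\Delta_{A\otimes_\tau B} = (1\otimes\tau^{-1}\otimes 1)(\Delta_A\otimes\Delta_B)$, together with the fact that $A$ and $B$ are each Frobenius and that $\tau$ is compatible with both multiplications (Lemma~\ref{lemm:twist-multiplication-compatibility}) and, by hypothesis, with both comultiplications (diagram~\eqref{diag:tau-comultiplications}, equivalently \eqref{diag:tau-commutes-cmA} and \eqref{diag:tau-commutes-cmB}). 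Since the two Frobenius squares are mirror images of one another, I would prove the first one carefully and remark that the second follows by the symmetric argument.

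Let me check first that the structure is coherent. By Proposition~\ref{prop:AtB-unital-associative}, $(A\otimes_\tau B, \nabla_{A\otimes_\tau B}, \eta_{A\otimes_\tau B})$ is a unital associative algebra, and since diagram~\eqref{diag:tau-comultiplications} commutes, Proposition~\ref{prop:AtB-counital-coassociative} gives that $(A\otimes_\tau B, \Delta_{A\otimes_\tau B}, \epsilon_{A\otimes_\tau B})$ is a counital coassociative coalgebra. So the only thing left is the Frobenius compatibility~\eqref{eq:frob-condition}. I would start the first square at $(A\otimes B)\otimes(A\otimes B) = ABAB$ and expand $\Delta_{A\otimes_\tau B}\circ\nabla_{A\otimes_\tau B}$ as
$$
(1\otimes\tau^{-1}\otimes 1)(\Delta_A\otimes\Delta_B)(\nabla_A\otimes\nabla_B)(1\otimes\tau\otimes 1),
$$
and rewrite the target $(1\otimes\nabla_{A\otimes_\tau B})(\Delta_{A\otimes_\tau B}\otimes 1)$ similarly. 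The core of the computation is to push the comultiplications $\Delta_A,\Delta_B$ past $\nabla_A,\nabla_B$ using the Frobenius condition \emph{inside each factor}, and then reconcile the three occurrences of $\tau$ and $\tau^{-1}$ that appear using the twist--comultiplication compatibility and the fact that consecutive twists assemble into the well-defined maps $\tau_{i,j}$ with their multiplication-compatibility diagrams. The whole argument is a single large commutative diagram whose inner faces are: (i) the Frobenius squares of $A$ and $B$; (ii) the compatibility of $\tau$ with $\nabla_A,\nabla_B$; (iii) the compatibility of $\tau^{-1}$ with $\Delta_A,\Delta_B$; and (iv) naturality/bifunctoriality identities of the symmetric monoidal structure that let disjoint tensor factors slide past each other.

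\textbf{The main obstacle} will be bookkeeping the interaction of the twist with the Frobenius maps in the correct order: the multiplication inserts a $\tau$ to braid $B$ past $A$, while the comultiplication inserts a $\tau^{-1}$, and making these cancel (or combine into a single recognizable $\tau_{i,j}$) on both routes around the square is where the genuine content lies. Concretely, after applying the Frobenius condition in each factor I will have strings of the form $A\cdots AB\cdots B$ with several $A$'s and $B$'s interleaved, and I must verify that the pattern of $\tau$'s produced by the multiplication side matches the pattern of $\tau^{-1}$'s consumed by the comultiplication side so that the two composites agree; this is exactly the point where the hypothesis that \eqref{diag:tau-comultiplications} commutes (hence \eqref{diag:tau-commutes-cmA}, \eqref{diag:tau-commutes-cmB}) is indispensable, since without it the two sides braid the middle tensor factors differently. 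I expect the diagram to decompose into a grid of small squares each of which commutes for one of reasons (i)--(iv), and the proof will conclude by pasting them together, with the second Frobenius square handled by the entirely analogous mirrored diagram.

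\begin{proof}

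\end{proof}
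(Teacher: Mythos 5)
Your plan is exactly the paper's proof: the paper establishes the first square of \eqref{eq:frob-condition} by pasting one large commutative diagram whose inner faces are precisely your ingredients (i)--(iv) --- the Frobenius squares of $A$ and $B$, the compatibility of $\tau$ with $\nabla_A,\nabla_B$ from Lemma~\ref{lemm:twist-multiplication-compatibility}, the compatibility with $\Delta_A,\Delta_B$ guaranteed by \eqref{diag:tau-comultiplications} (entering via the assembled twists $\tau_{2,A}$ and $\tau_{B,2}^{-1}$, as you anticipated), and sliding of disjoint tensor factors --- and then dismisses the second square as analogous, just as you propose. The only thing absent from your write-up is the executed diagram itself, but since every face you would need is one of the commuting squares you cite, the plan is sound and matches the paper's argument step for step.
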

\begin{proof}
The diagram
\begin{center}
\begin{tikzcd}
ABAB \arrow{rr}{1\tau 1} \arrow{dr}{1\Delta_{B}11} \arrow[swap]{dd}{\Delta_{A} \Delta_{B} 11} & & AABB \arrow{r}{\nabla_{A} \nabla_{B}} \arrow{d}{11 \Delta_{B} 1} & AB \arrow{dd}{\Delta_A \Delta_B}\\
 & ABBAB \arrow{r}{1\tau_{2,A} 1} \arrow[swap]{dl}{\Delta_{A} 1111} & AABBB \arrow{d}{\Delta_{A}1111} & \\
AABBAB \arrow[swap]{dd}{1 \tau^{-1} 111} \arrow{dr}{111 \tau 1} \arrow{rr}{11 \tau_{2,A} 1} & & AAABBB \arrow{dd}{1 \tau_{B,2}^{-1} 11} \arrow[draw = none]{r}{\fbox{?}} & AABB \arrow{dd}{1\tau^{-1} 1} \\
 & AABABB \arrow{ur}{11 \tau 11} & & \\
ABABAB \arrow{rr}{111\tau 1} & & ABAABB \arrow{r}{11\nabla_{A} \nabla_{B}} \arrow{ul}{1 \tau 111} & ABAB
\end{tikzcd}
\end{center}
commutes because the diagram
\begin{center}
\begin{tikzcd}
AABB \arrow{r}{11\nabla_{B}} \arrow{d}{11\Delta_{B} 1} & AAB \arrow{rr}{\nabla_{A} 1} \arrow{d}{11\Delta_{B}} \arrow{dr}{\nabla_{A} 1} & & AB \arrow{d}{\Delta_{A} 1}\\
AABBB \arrow{dd}{\Delta_{A} 1111} \arrow{dr}{\nabla_{A} 111} \arrow{r}{111\nabla_{B}} & AABB \arrow{dr}{\nabla_{A} 11} & AB \arrow{r}{\Delta_{A} 1} \arrow{d}{1\Delta_{B}} & AAB \arrow{dd}{11\Delta_{B}}\\
 & ABBB \arrow{r}{111\nabla_{B}} \arrow{d}{\Delta_{A} 111} & ABB \arrow{dr}{\Delta_{A} 11} & \\
AAABBB \arrow{r}{1\nabla_{A} 111} & AABBB \arrow{rr}{111\nabla_{B}} & & AABB \\
ABAABB \arrow{u}{1\tau_{B,2} 11} \arrow{r}{11\nabla_{A} 11} & ABABB \arrow{rr}{111\nabla_{B}} \arrow{u}{1\tau 1} & & ABAB \arrow{u}{1\tau 1}
\end{tikzcd}
\end{center}
commutes, giving one of the associativity conditions on $\nabla_{A\otimes_{\tau} B}$ and $\Delta_{A\otimes_{\tau} B}$. The remaining one follows analogously.
\end{proof}

This is an unexpected generalization of the fact that the tensor product of Frobenius algebras is a Frobenius algebra (see \cite[Theorem 2.1]{MR110735}).

\begin{rema}\label{rema:if-and-only-if}
Note that if $A\otimes_{\tau} B$ is a Frobenius algebra then in particular $A\otimes_{\tau} B$ is a counital coassociative coalgebra, whence diagram~\eqref{diag:tau-comultiplications} commutes because of Proposition~\ref{prop:AtB-counital-coassociative}. Namely, Theorem~\ref{theo:ttp-cttp-always-Frobenius} gives a necessary and sufficient condition.
\end{rema}

\begin{exam}[Skew group algebra]\label{exam:twist-not-cotwist-SG}
Let $G$ and $H$ be finite groups where $G$ acts on $H$ via $\varphi: G \to \mathrm{Aut}(H)$. The map $\tau: kG\otimes kH \to kH \otimes kG$ given by $\tau(g\otimes h) = \varphi(g)(h)\otimes g$ for $g \in G$, $h\in H$ extends to a twisting map giving $kH \otimes_{\tau} kG \cong k(H\rtimes_\varphi G)$ as $k$-algebras. If we consider the coalgebra structures of $kG$ and $kH$ as in Example~\ref{exam:bialgebra}, then the twisting map is not compatible with the comultiplications, namely diagram \eqref{diag:tau-comultiplications} does not commute.

Alternatively, we can consider the Frobenius coalgebra structure on the group algebras where the counits $\epsilon_G:kG \to k$ and $\epsilon_H:kH \to k$ and comultiplications $\Delta_G: kG \to kG\otimes kG$ and $\Delta_H: kH \to kH\otimes kH$ are given by linearly extending $\epsilon_G(r) = \delta_{r,1}$ and $\epsilon_H(s) = \delta_{s,1}$ for $r\in G$ and $s\in H$, and
\begin{equation*}
\Delta_G(g) = \sum_{r\in G} gr\otimes r^{-1},\quad \Delta_H(h) = \sum_{s\in H} hs\otimes s^{-1}.
\end{equation*}
The above coalgebra structures coincides with the one in Example~\ref{exam:Frobenius} by Remark~\ref{rema:frob-alg-p-c-ring}. These comultiplications are compatible with the twisting map, namely diagram \eqref{diag:tau-comultiplications} commutes. Further, the Frobenius algebra structure obtained on $kG\otimes_{\tau} kH$ recovers exactly the Frobenius algebra structure on $k(H\rtimes_\varphi G)$ given by $\epsilon_{H\rtimes_\varphi G}:k(H\rtimes_\varphi G) \to k$ and $\Delta_{H\rtimes_\varphi G}: k(H\rtimes_\varphi G) \to k(H\rtimes_\varphi G)\otimes k(H\rtimes_\varphi G)$ as
\begin{equation*}
\epsilon_{H\rtimes_\varphi G}(s,r) = \delta_{(s,r),(1,1)},\quad \Delta_{H\rtimes_\varphi G}(h,g) = \sum_{(s,r)\in H\rtimes_\varphi G} (h,g)(s,r)\otimes (s,r)^{-1}.
\end{equation*}
\end{exam}

\begin{coro}\label{coro:ttp-cttp-separable-Frobenius}
Let $A$ and $B$ be separable Frobenius algebras over $k$, let $\Gamma_A:A\to A\otimes A$ and $\Gamma_B:B\to B\otimes B$ the right inverses of the respective multiplications. Let $\tau:B\otimes A\rightarrow A\otimes B$ be a twisting map such that diagram~\eqref{diag:tau-comultiplications} commutes. Then $(A\otimes_{\tau} B,\nabla_{A\otimes_{\tau} B},\eta_{A\otimes_{\tau} B},\Delta_{A\otimes_{\tau} B},\epsilon_{A\otimes_{\tau} B})$ is a separable Frobenius algebra if and only if diagrams~\eqref{diag:tau-commutes-sectionA} and \eqref{diag:tau-commutes-sectionB} commute.
\end{coro}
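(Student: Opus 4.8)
The plan is to combine the two principal results already established. Theorem~\ref{theo:ttp-cttp-always-Frobenius} makes $A\otimes_{\tau} B$ a Frobenius algebra as soon as diagram~\eqref{diag:tau-comultiplications} commutes, which is part of the hypotheses; and Theorem~\ref{theo:twist-separable} characterizes separability of $A\otimes_{\tau} B$, together with the explicit candidate right inverse $\Gamma_{A\otimes_{\tau} B}=(1\otimes\tau^{-1}\otimes 1)(\Gamma_A\otimes\Gamma_B)$, in terms of diagrams~\eqref{diag:tau-commutes-sectionA} and \eqref{diag:tau-commutes-sectionB}. Since ``separable Frobenius'' means ``Frobenius and separable'' and the Frobenius half is now automatic, the corollary reduces to matching the separability half against those two diagrams. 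Write $C:=A\otimes_{\tau} B$ and argue the two implications separately.

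For the implication ($\Leftarrow$), assume diagrams~\eqref{diag:tau-commutes-sectionA} and \eqref{diag:tau-commutes-sectionB} commute. Theorem~\ref{theo:twist-separable} then gives that $C$ is separable with right inverse $\Gamma_{A\otimes_{\tau} B}$, while Theorem~\ref{theo:ttp-cttp-always-Frobenius} gives that $C$ is Frobenius; together these exhibit $C$ as a separable Frobenius algebra. This direction is immediate and uses no new computation.

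The implication ($\Rightarrow$) is the substantive one. Assume $C$ is a separable Frobenius algebra. The first diagram in the proof of Theorem~\ref{theo:twist-separable} already shows, \emph{unconditionally}, that $\Gamma_{A\otimes_{\tau} B}$ is a linear section of the multiplication, that is $\nabla_{A\otimes_{\tau} B}\,\Gamma_{A\otimes_{\tau} B}=1$; and the remaining diagram chase shows that $\Gamma_{A\otimes_{\tau} B}$ is moreover an $(A\otimes_{\tau} B)$\nobreakdash-bimodule map precisely when the two diagrams commute. The key step I would insert is the uniqueness of the bimodule right inverse in a Frobenius algebra. Because $\Delta_{A\otimes_{\tau} B}$ and $\nabla_{A\otimes_{\tau} B}$ are bimodule maps, the window endomorphism $\nabla_{A\otimes_{\tau} B}\,\Delta_{A\otimes_{\tau} B}\colon C\to C$ is left multiplication by a central element $w$, and every $C$\nobreakdash-bimodule map $C\to C\otimes C$ is obtained from $\Delta_{A\otimes_{\tau} B}$ by left multiplication of the first tensor factor by a central element; hence a bimodule section exists if and only if $w$ is invertible, in which case it is unique. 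Since $A$ and $B$ are separable Frobenius, their windows $c_A,c_B$ are invertible and $\Gamma_A,\Gamma_B$ are the corresponding unique bimodule sections. I would then use diagram~\eqref{diag:tau-comultiplications} to relate $w$ to $c_A$ and $c_B$ and thereby identify the unique bimodule section of $C$ with the constructed map $\Gamma_{A\otimes_{\tau} B}$. Once any bimodule section is forced to coincide with $\Gamma_{A\otimes_{\tau} B}$, separability makes $\Gamma_{A\otimes_{\tau} B}$ a bimodule map, and Theorem~\ref{theo:twist-separable} returns the two diagrams.

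The main obstacle is exactly this identification. A priori $\Gamma_{A\otimes_{\tau} B}$ is only a linear section, and separability merely supplies \emph{some} bimodule section without telling us it is the canonical candidate; one must rule out the possibility that $C$ is separable through an exotic section while $\Gamma_{A\otimes_{\tau} B}$ fails to be a bimodule map, equivalently that the two diagrams fail while $C$ stays separable. This is where the Frobenius structure is indispensable, via the uniqueness of the Casimir/window element, and it is the step I expect to require the most care; it is precisely what upgrades the purely algebraic Theorem~\ref{theo:twist-separable} to the present statement.
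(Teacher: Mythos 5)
Your ($\Leftarrow$) direction coincides with the paper's entire proof, which is the one-line combination of Theorem~\ref{theo:ttp-cttp-always-Frobenius} with Theorem~\ref{theo:twist-separable}: in the paper, separability of $A\otimes_{\tau} B$ is witnessed by the inherited section $\Gamma_{A\otimes_{\tau} B}=(1\otimes\tau^{-1}\otimes 1)(\Gamma_A\otimes\Gamma_B)$, consistent with both the phrasing of Theorem~\ref{theo:twist-separable} (whose equivalence explicitly names this $\Gamma_{A\otimes_{\tau} B}$) and the standing convention at the start of Section~\ref{sec:ttp-cttp} that twisted tensor products carry the structures inherited from the twisting map. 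Under that reading, ($\Rightarrow$) is literally the right-to-left implication of Theorem~\ref{theo:twist-separable} and needs no further argument. Your instinct that the bare existence reading of ``separable'' requires ruling out exotic sections is a legitimate observation about the statement; the problem is that the lemma you insert to do so is false.

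Concretely, for a Frobenius algebra $C$ the $C$-bimodule maps $C\to C\otimes C$ are \emph{not} the central multiples of $\Delta$: they are classified by Casimir elements, and $(1\otimes d)\Delta(1)$ is Casimir for \emph{every} $d\in C$, since $(a\otimes 1)$ commutes with $(1\otimes d)$ and the Frobenius condition \eqref{eq:frob-condition} gives $(a\otimes 1)\Delta(1)=\Delta(1)(1\otimes a)$; so this space has dimension $\dim_k C$, not $\dim_k Z(C)$. Take $C=M_2(k)$ with $\epsilon=\mathrm{tr}$, so $\Delta(1)=\sum_{i,j}E_{ij}\otimes E_{ji}$. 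Then $\nabla\bigl((1\otimes d)\Delta(1)\bigr)=\mathrm{tr}(d)\cdot 1$, so every $d$ with $\mathrm{tr}(d)=1$ yields a bimodule section $c\mapsto (1\otimes d)\Delta(c)$ of $\nabla$: the choices $d=E_{11}$ and $d=E_{22}$ give the distinct sections determined by $\sum_i E_{i1}\otimes E_{1i}$ and $\sum_i E_{i2}\otimes E_{2i}$, so uniqueness fails whenever the window $w=\nabla\Delta(1)=2\cdot 1$ is invertible; and in characteristic $2$ one has $w=0$ not invertible while these sections still exist, so the ``section exists iff $w$ invertible'' claim fails in both directions. Hence your ($\Rightarrow$) argument has a genuine gap: there is no uniqueness with which to force an arbitrary bimodule section of $\nabla_{A\otimes_{\tau}B}$ to coincide with the canonical $\Gamma_{A\otimes_{\tau}B}$, and the step ``separability through some section implies $\Gamma_{A\otimes_{\tau}B}$ is a bimodule map'' remains unproved. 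To match the paper you should instead take the separability half of the conclusion as the precise condition appearing in Theorem~\ref{theo:twist-separable}, namely that $\Gamma_{A\otimes_{\tau}B}$ itself is a bimodule right inverse, after which both implications are immediate.
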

\begin{proof}
Apply Theorem~\ref{theo:ttp-cttp-always-Frobenius} and Theorem~\ref{theo:twist-separable}.
\end{proof}

\begin{coro}\label{coro:ttp-cttp-special-Frobenius}
Let $A$ and $B$ be special Frobenius algebras over $k$. Let $\tau:B\otimes A\rightarrow A\otimes B$ be a twisting map such that diagram~\eqref{diag:tau-comultiplications} commutes. Then, the Frobenius algebra $(A\otimes_{\tau} B,\nabla_{A\otimes_{\tau} B},\eta_{A\otimes_{\tau} B},\Delta_{A\otimes_{\tau} B},\epsilon_{A\otimes_{\tau} B})$ is special.
\end{coro}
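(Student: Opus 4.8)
The plan is to verify the defining diagram of a special Frobenius algebra directly, namely that $\nabla_{A\otimes_{\tau} B}\circ\Delta_{A\otimes_{\tau} B} = 1_{A\otimes_{\tau} B}$. Since diagram~\eqref{diag:tau-comultiplications} is assumed to commute, Theorem~\ref{theo:ttp-cttp-always-Frobenius} already guarantees that $A\otimes_{\tau} B$ is a Frobenius algebra, so the only thing left to establish is the special condition. The strategy is purely computational: substitute the explicit descriptions of the induced multiplication (Definition~\ref{defi:twisted-tensor-product}) and comultiplication (Remark~\ref{rema:coalg-defi}) and watch $\tau$ cancel against $\tau^{-1}$.

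Concretely, I would write the composite as
\begin{equation*}
\nabla_{A\otimes_{\tau} B}\circ\Delta_{A\otimes_{\tau} B} = (\nabla_A\otimes\nabla_B)\circ(1\otimes\tau\otimes 1)\circ(1\otimes\tau^{-1}\otimes 1)\circ(\Delta_A\otimes\Delta_B).
\end{equation*}
The key observation is that the two inner maps act by $\tau$ and $\tau^{-1}$ on the same middle two tensor factors of $A\otimes A\otimes B\otimes B$: the comultiplication applies $\tau^{-1}\colon A\otimes B\to B\otimes A$ there, and the multiplication then applies $\tau\colon B\otimes A\to A\otimes B$ to undo it. Since $\tau\circ\tau^{-1} = 1_{A\otimes B}$, we get $(1\otimes\tau\otimes 1)\circ(1\otimes\tau^{-1}\otimes 1) = 1_{A\otimes A\otimes B\otimes B}$, and the composite collapses to $(\nabla_A\otimes\nabla_B)\circ(\Delta_A\otimes\Delta_B)$.

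To finish, I would invoke functoriality of the tensor product (the interchange law) to factor this as $(\nabla_A\circ\Delta_A)\otimes(\nabla_B\circ\Delta_B)$, which equals $1_A\otimes 1_B = 1_{A\otimes_{\tau} B}$ precisely because $A$ and $B$ are special. I do not expect any serious obstacle here: the whole argument is essentially immediate once the formulas are written out. The only point that deserves care is the bookkeeping of domains and codomains, ensuring that it is genuinely $\tau$ following $\tau^{-1}$ on the \emph{same} interior factors that appears, so that the cancellation is legitimate rather than a spurious rewriting. This is exactly the diagrammatic counterpart of the trivial algebraic fact that specialness is preserved under any deformation that leaves the outer $\nabla_A\otimes\nabla_B$ and $\Delta_A\otimes\Delta_B$ untouched and only inserts a map with its inverse in between.
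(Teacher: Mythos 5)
Your proof is correct, and it takes a genuinely different (and shorter) route than the paper's. The paper deduces specialness through its separability machinery: by Proposition~\ref{prop:frob-special-separable-comultiplication}, specialness of $A$ and $B$ says that $\Delta_A$ and $\Delta_B$ are right inverses of $\nabla_A$ and $\nabla_B$; Theorem~\ref{theo:twist-separable} is then applied --- its hypothesis diagrams~\eqref{diag:tau-commutes-sectionA} and \eqref{diag:tau-commutes-sectionB} hold because, with $\Gamma_A = \Delta_A$ and $\Gamma_B = \Delta_B$, they are precisely diagrams~\eqref{diag:tau-commutes-cmA} and \eqref{diag:tau-commutes-cmB}, which commute since diagram~\eqref{diag:tau-comultiplications} does --- to conclude that $\Delta_{A\otimes_{\tau} B}$ is a right inverse of $\nabla_{A\otimes_{\tau} B}$, and Proposition~\ref{prop:frob-special-separable-comultiplication} is invoked once more to return to specialness. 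You instead verify the defining identity $\nabla_{A\otimes_{\tau} B}\circ\Delta_{A\otimes_{\tau} B} = 1$ directly via the cancellation $(1\otimes\tau\otimes 1)\circ(1\otimes\tau^{-1}\otimes 1) = 1_{A\otimes A\otimes B\otimes B}$ followed by the interchange law, and your bookkeeping is right: the comultiplication inserts $\tau^{-1}\colon A\otimes B\to B\otimes A$ on the middle two factors and the multiplication applies $\tau\colon B\otimes A\to A\otimes B$ on those same factors, so the cancellation is on the nose. In effect you use only the easy half of Theorem~\ref{theo:twist-separable} (your computation is exactly the first commutative diagram in its proof, the one showing $(1\otimes\tau^{-1}\otimes 1)(\Gamma_A\otimes\Gamma_B)$ is a right inverse); the hard half, the bimodule condition where \eqref{diag:tau-commutes-sectionA} and \eqref{diag:tau-commutes-sectionB} are actually needed, is irrelevant for specialness because the bimodule property of $\Delta$ is automatic for any Frobenius algebra. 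What the paper's route buys is a uniform treatment alongside Corollary~\ref{coro:ttp-cttp-separable-Frobenius} and an identification of $\Delta_{A\otimes_{\tau} B}$ as the canonical separability section; what yours buys is economy, and it makes transparent that diagram~\eqref{diag:tau-comultiplications} enters only through Theorem~\ref{theo:ttp-cttp-always-Frobenius} (to get the Frobenius structure at all, without which ``special'' is not even well-posed), not through the cancellation itself.
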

\begin{proof}
By Theorem~\ref{theo:ttp-cttp-always-Frobenius}, $(A\otimes_{\tau} B,\nabla_{A\otimes_{\tau} B},\eta_{A\otimes_{\tau} B},\Delta_{A\otimes_{\tau} B},\epsilon_{A\otimes_{\tau} B})$ is indeed a Frobenius algebra. Since $A$ and $B$ are special Frobenius algebras, by Proposition~\ref{prop:frob-special-separable-comultiplication}, $A$ and $B$ are both separable with $\Delta_A$ and $\Delta_B$ the right inverses of $\nabla_A$ and $\nabla_B$ respectively. The algebra $(A\otimes_{\tau} B,\nabla_{A\otimes_{\tau} B},\eta_{A\otimes_{\tau} B})$ is then separable and, by Theorem~\ref{theo:twist-separable}, a right inverse of its multiplication is $(\Delta_A \otimes\Delta_B)(1\otimes\tau^{-1}\otimes 1) = \Delta_{A\otimes_{\tau} B}$. Hence, $(A\otimes_{\tau} B,\nabla_{A\otimes_{\tau} B},\eta_{A\otimes_{\tau} B},\Delta_{A\otimes_{\tau} B},\epsilon_{A\otimes_{\tau} B})$ is a special Frobenius algebra by Proposition~\ref{prop:frob-special-separable-comultiplication}.
\end{proof}

The previous results together with Theorem~\ref{theo:twist-separable} are a noncommutative generalization of the fact that the tensor product of {\'e}tale algebras is an {\'e}tale algebra. Moreover, we achieved a very computationally efficient criterion for self-injectivity.

\begin{coro}[Criterion for self-injectivity]\label{coro:self-injective-criterion}
Let $\Lambda$ be a unital associative $k$-algebra, let $A$ and $B$ be $k$-subalgebras of $\Lambda$ via the injective $k$-algebra morphisms $\iota_A : A\to \Lambda$ and $\iota_B : B\to \Lambda$ such that $\nabla_{\Lambda} (\iota_A\otimes\iota_B) : A\otimes B\to \Lambda$ is an isomorphism of $k$ vector spaces. If $A$ and $B$ are Frobenius algebras and $\tau = (\nabla_{\Lambda} (\iota_A\otimes\iota_B))^{-1} \nabla_{\Lambda} (\iota_B\otimes\iota_A)$ makes diagram~\eqref{diag:tau-comultiplications} commute, then $\Lambda$ is self-injective.
\end{coro}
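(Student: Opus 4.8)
The plan is to read the hypotheses as exhibiting $\Lambda$ as a twisted tensor product and then to follow the chain
\[
\Lambda \cong A\otimes_{\tau} B \ \text{ as algebras}\ \Longrightarrow\ A\otimes_{\tau} B\ \text{ is Frobenius}\ \Longrightarrow\ \Lambda\ \text{ is Frobenius}\ \Longrightarrow\ \Lambda\ \text{ is self-injective.}
\]
The middle implication is exactly Theorem~\ref{theo:ttp-cttp-always-Frobenius}; the first is the reconstruction half of the twisted tensor product correspondence, and the last is the classical fact that a Frobenius algebra is self-injective.

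First I would check that $(\iota_A,\iota_B)$ realizes $\Lambda$ as a twisted tensor product with twisting map $\tau$. Writing $m=\nabla_{\Lambda}(\iota_A\otimes\iota_B)$, the assumption that $m$ is a linear isomorphism is the factorization condition of \cite{MR1352565}. Compatibility of $\tau=m^{-1}\nabla_{\Lambda}(\iota_B\otimes\iota_A)$ with units, namely diagram~\eqref{diag:tau-units}, is obtained by evaluating the defining formula on $\eta_A$ and $\eta_B$; compatibility with multiplications, namely diagram~\eqref{diag:tau-multiplications}, follows from associativity of $\nabla_{\Lambda}$ after cancelling the injective map $m$. A direct check then shows that $m$ intertwines $\nabla_{A\otimes_{\tau}B}$ with $\nabla_{\Lambda}$ and the two units, so $m\colon A\otimes_{\tau}B\to\Lambda$ is an isomorphism of algebras.

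Second, since $A$ and $B$ are Frobenius and, by hypothesis, $\tau$ makes diagram~\eqref{diag:tau-comultiplications} commute, Theorem~\ref{theo:ttp-cttp-always-Frobenius} equips $A\otimes_{\tau}B$ with a Frobenius structure. Transporting the induced pairing along $m$, i.e. setting $\beta_{\Lambda}=\beta_{A\otimes_{\tau}B}(m^{-1}\otimes m^{-1})$, gives an associative non-degenerate pairing on $\Lambda$, so $\Lambda$ is a Frobenius algebra by Proposition~\ref{prop:frob-alg-pairing}. A Frobenius algebra is finite-dimensional and self-injective \cite[Chapter~6]{MR1653294}, and self-injectivity is preserved under algebra isomorphism; hence $\Lambda$ is self-injective.

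The step I expect to be the main obstacle is confirming that $\tau$ is a genuine twisting map in the sense of Definition~\ref{defi:twisting-map}, in particular that it is \emph{bijective}: the reconstruction of \cite{MR1352565} only produces a straightening map, and the coalgebra structure used in Theorem~\ref{theo:ttp-cttp-always-Frobenius} (see Remark~\ref{rema:coalg-defi}) requires $\tau^{-1}$. Here the Frobenius hypothesis is essential, since it makes $A$ and $B$ finite-dimensional, whence $\dim(B\otimes A)=\dim(A\otimes B)=\dim\Lambda$ and bijectivity of $\tau$ reduces to the surjectivity of $\nabla_{\Lambda}(\iota_B\otimes\iota_A)$, that is, to $\Lambda=BA$. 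The commutativity of diagram~\eqref{diag:tau-comultiplications} is exactly what I would exploit to secure this, and it cannot be omitted: the factorization $k\langle x,y\rangle/(x^2,y^2,xy)\cong (k[y]/(y^2))\otimes_{\tau}(k[x]/(x^2))$ of two Frobenius algebras has $\tau(x\otimes y)=0$, so $\tau$ is not bijective, diagram~\eqref{diag:tau-comultiplications} fails, and the resulting local algebra has two-dimensional socle and is not self-injective.
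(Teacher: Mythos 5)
Your main chain is exactly the paper's proof: the paper cites \cite[Proposition 2.7]{MR1352565} to obtain $\Lambda \cong A\otimes_{\tau} B$ as $k$-algebras, then applies Theorem~\ref{theo:ttp-cttp-always-Frobenius}, then uses the classical fact that Frobenius algebras are self-injective. Your explicit verifications of diagrams~\eqref{diag:tau-units} and \eqref{diag:tau-multiplications}, and the transport of the pairing along $m$, are routine expansions of those two citations.

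The one place where you go beyond the paper is the bijectivity of $\tau$, and there you have correctly located a genuine subtlety but not closed it. In \cite{MR1352565} twisting maps are not required to be invertible, whereas Definition~\ref{defi:twisting-map} builds bijectivity into the term and Theorem~\ref{theo:ttp-cttp-always-Frobenius} needs $\tau^{-1}$ even to define $\Delta_{A\otimes_{\tau} B}$; your example $k\langle x,y\rangle/(x^2,y^2,xy)$ correctly exhibits a factorization of two Frobenius algebras with non-bijective $\tau$ (here $BA$ is three-dimensional while $\Lambda$ is four-dimensional, and the local algebra has socle spanned by $y$ and $yx$, so it is indeed not self-injective). However, your proposed repair --- that commutativity of diagram~\eqref{diag:tau-comultiplications} ``is exactly what I would exploit'' to secure $\Lambda = BA$ --- is an assertion, not an argument: diagram~\eqref{diag:tau-comultiplications} involves only $\tau$ itself, and you offer no mechanism extracting surjectivity of $\nabla_{\Lambda}(\iota_B\otimes\iota_A)$ from it. A way to actually close the gap is to invoke the counit compatibility \eqref{diag:tau-counits}, which is needed anyway for $A\otimes_{\tau} B$ to be counital by Proposition~\ref{prop:AtB-counital-coassociative}: setting $\sigma = (\beta_A\otimes 1\otimes 1)(1\otimes\tau\otimes 1)(1\otimes 1\otimes\alpha_A) : A\otimes B\to B\otimes A$, diagram~\eqref{diag:tau-commutes-mA}, the left half of \eqref{diag:tau-counits}, and the snake identity for $(\beta_A,\alpha_A)$ yield $\sigma\tau = 1_{B\otimes A}$, and since Frobenius algebras in this diagrammatic sense are automatically finite-dimensional, this one-sided inverse forces $\tau$ to be bijective. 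Alternatively, read the corollary as the paper implicitly does, with $\tau$ assumed to be a twisting map in the sense of Definition~\ref{defi:twisting-map}, hence bijective by fiat; either way, this step should be stated explicitly rather than left as an expectation.
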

\begin{proof}
By \cite[Proposition 2.7]{MR1352565} the hypothesis give $\Lambda \cong A\otimes_{\tau} B$ as $k$-algebras. Then $\Lambda$ is a Frobenius algebra by Theorem~\ref{theo:ttp-cttp-always-Frobenius}, so it is self-injective.
\end{proof}

This is an extremely useful criterion that has multiple applications, including to twisted tensor products of truncated polynomial rings and finite dimensional semisimple algebras. For example, it can be used to prove that some quantum complete intersections are self-injective (see Corollary~\ref{coro:quantum-complete-Frobenius} for a more general statement). We conclude the section by explicitly giving the pairing and co-pairing of a twisted tensor product of Frobenius algebras.

\begin{prop}\label{prop:ttp-pairing-copairing}
Let $A$ and $B$ be Frobenius algebras over $k$ with pairings $\beta_A$ and $\beta_B$ and co-pairings $\alpha_A$ and $\alpha_B$, let $\tau:B\otimes A\rightarrow A\otimes B$ be a twisting map such that diagram~\eqref{diag:tau-comultiplications} commutes. Then
\begin{center}
\begin{tikzcd}
\beta_{A\otimes_\tau B}: A\otimes B\otimes A\otimes B \arrow{r}{1\otimes\tau\otimes 1} & A\otimes A\otimes B\otimes B \arrow{r}{\beta_A \otimes \beta_B} & k\otimes k \arrow{r}{\cong} & k,
\end{tikzcd}
\end{center}
\begin{center}
\begin{tikzcd}
\alpha_{A\otimes_\tau B}: k \arrow{r}{\cong} & k\otimes k \arrow{r}{\alpha_A\otimes\alpha_B} & A\otimes A\otimes B\otimes B \arrow{r}{1\otimes \tau^{-1}\otimes 1} & A\otimes B\otimes A\otimes B,
\end{tikzcd}
\end{center}
are an associative non-degenerate pairing and co-pairing of $A\otimes_{\tau} B$.
\end{prop}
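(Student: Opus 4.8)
The plan is to sidestep any direct verification of associativity and non-degeneracy and instead to recycle the Frobenius structure already in hand. Since $\tau$ makes diagram~\eqref{diag:tau-comultiplications} commute, Theorem~\ref{theo:ttp-cttp-always-Frobenius} tells us that $(A\otimes_{\tau} B,\nabla_{A\otimes_{\tau} B},\eta_{A\otimes_{\tau} B},\Delta_{A\otimes_{\tau} B},\epsilon_{A\otimes_{\tau} B})$ is a Frobenius algebra. By Remark~\ref{rema:frob-alg-p-c-ring}, the composites $\epsilon_{A\otimes_{\tau} B}\circ\nabla_{A\otimes_{\tau} B}$ and $\Delta_{A\otimes_{\tau} B}\circ\eta_{A\otimes_{\tau} B}$ are then \emph{automatically} an associative non-degenerate pairing and its co-pairing. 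So the whole content of the proposition reduces to showing that these two canonical composites coincide with the explicit formulas $\beta_{A\otimes_{\tau} B}$ and $\alpha_{A\otimes_{\tau} B}$ displayed in the statement; once the formulas match, associativity and non-degeneracy come for free. Throughout I would read $\beta_A,\beta_B,\alpha_A,\alpha_B$ as the data attached to the Frobenius structures of $A$ and $B$ via Remark~\ref{rema:frob-alg-p-c-ring}, that is $\beta_A=\epsilon_A\circ\nabla_A$, $\beta_B=\epsilon_B\circ\nabla_B$, $\alpha_A=\Delta_A\circ\eta_A$, and $\alpha_B=\Delta_B\circ\eta_B$.

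For the pairing, I would expand $\epsilon_{A\otimes_{\tau} B}\circ\nabla_{A\otimes_{\tau} B}$ using Definition~\ref{defi:twisted-tensor-product} and Remark~\ref{rema:coalg-defi} into the composite $A\otimes B\otimes A\otimes B\xrightarrow{1\otimes\tau\otimes 1}A\otimes A\otimes B\otimes B\xrightarrow{\nabla_A\otimes\nabla_B}A\otimes B\xrightarrow{\epsilon_A\otimes\epsilon_B}k\otimes k\xrightarrow{\cong}k$. The one substantive step is the interchange law $(\epsilon_A\otimes\epsilon_B)\circ(\nabla_A\otimes\nabla_B)=(\epsilon_A\circ\nabla_A)\otimes(\epsilon_B\circ\nabla_B)$, which collapses the two middle arrows into $\beta_A\otimes\beta_B$ by Remark~\ref{rema:frob-alg-p-c-ring} and produces precisely $\beta_{A\otimes_{\tau} B}$. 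Dually, I would expand $\Delta_{A\otimes_{\tau} B}\circ\eta_{A\otimes_{\tau} B}$ as $k\xrightarrow{\cong}k\otimes k\xrightarrow{\eta_A\otimes\eta_B}A\otimes B\xrightarrow{\Delta_A\otimes\Delta_B}A\otimes A\otimes B\otimes B\xrightarrow{1\otimes\tau^{-1}\otimes 1}A\otimes B\otimes A\otimes B$ and apply $(\Delta_A\otimes\Delta_B)\circ(\eta_A\otimes\eta_B)=(\Delta_A\circ\eta_A)\otimes(\Delta_B\circ\eta_B)=\alpha_A\otimes\alpha_B$ to recover $\alpha_{A\otimes_{\tau} B}$.

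I do not expect a genuine obstacle here: the argument is entirely formal manipulation of composites, and the nontrivial input (that $A\otimes_{\tau} B$ is Frobenius at all) has been quarantined into Theorem~\ref{theo:ttp-cttp-always-Frobenius}. The only point that demands care is the bookkeeping of the coherence isomorphisms $k\otimes k\cong k$ and of the reassociation implicit in $(A\otimes B)\otimes(A\otimes B)\cong A\otimes B\otimes A\otimes B$, so that the interchange law is applied to the correct tensor factors. I would discharge this by assembling a single commutative diagram whose cells are exactly the naturality squares for these isomorphisms and the interchange squares above, thereby making the identification of $\epsilon_{A\otimes_{\tau} B}\circ\nabla_{A\otimes_{\tau} B}$ with $\beta_{A\otimes_{\tau} B}$ (and of $\Delta_{A\otimes_{\tau} B}\circ\eta_{A\otimes_{\tau} B}$ with $\alpha_{A\otimes_{\tau} B}$) purely diagrammatic, in keeping with the paper's methodology.
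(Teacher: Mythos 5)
Your proposal is correct and takes essentially the same route as the paper: its proof likewise quarantines the substance in Theorem~\ref{theo:ttp-cttp-always-Frobenius}, invokes Remark~\ref{rema:frob-alg-p-c-ring} for the canonical associative non-degenerate pairing and co-pairing, and then identifies $\epsilon_{A\otimes_{\tau} B}\circ\nabla_{A\otimes_{\tau} B}$ and $\Delta_{A\otimes_{\tau} B}\circ\eta_{A\otimes_{\tau} B}$ with the displayed formulas via commutative diagrams. The only cosmetic difference is that the paper's diagram chases pass through the recovery formulas of Remark~\ref{rema:frob-alg-p-c-ring} (expressing $\Delta_A,\Delta_B$ via $\alpha_A,\alpha_B$ and the multiplications, and $\epsilon_A,\epsilon_B$ via $\beta_A,\beta_B$ and the units, together with (co)unitality and associativity), whereas you apply the interchange law directly under the reading $\beta_A=\epsilon_A\circ\nabla_A$, $\alpha_A=\Delta_A\circ\eta_A$, which yields the same identification.
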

\begin{proof}
Since $A\otimes_{\tau} B$ is a Frobenius algebra by Theorem~\ref{theo:ttp-cttp-always-Frobenius}, it has an associative pairing $\beta_{A\otimes_\tau B}$ and co-pairing $\alpha_{A\otimes_\tau B}$ as in Remark~\ref{rema:frob-alg-p-c-ring}. The claimed expression for $\alpha_{A\otimes_\tau B}$ is given by the following commutative diagram.
\begin{center}
\begin{tikzcd}
kk \arrow{rrr}{\alpha_A\alpha_B}  \arrow{rd}{\cong} \arrow[swap]{ddd}{\eta_A\eta_B} & & & AABB \arrow{r}{1\tau^{-1}1} \arrow{ddd}{1111}  & ABAB \arrow{ddd}{1111} \\
& kkkk \arrow{r}{\alpha_A11\alpha_B} \arrow[swap]{d}{1\eta_A\eta_B 1} & AAkkBB  \arrow{ur}{\cong} \arrow{d}{1\eta_A\eta_B1} & & \\
& kABk \arrow{r}{\alpha_A11\alpha_B} & AAABBB \arrow{rd}{1\nabla_A\nabla_B1}& & \\
AB \arrow{rrr}{\Delta_A\Delta_B} \arrow{ur}{\cong} & & & AABB \arrow{r}{1\tau^{-1}1} & ABAB
\end{tikzcd}
\end{center}
Similarly for $\beta_{A\otimes_\tau B}$ we have the following commutative diagram.
\begin{center}
\begin{tikzcd}
ABAB \arrow{r}{1\tau1} \arrow[swap]{ddd}{1111} & AABB \arrow{rrr}{\nabla_A\nabla_B} \arrow{dr}{\cong} \arrow[swap]{ddd}{1111} & & & AB \arrow{ld}{\cong} \arrow{ddd}{\epsilon_A\epsilon_B}\\
& & kAABBk \arrow{r}{1\nabla_A\nabla_B1}\arrow[swap]{d}{\eta_A1111\eta_B} & kABk \arrow{d}{\eta_A11\eta_B} & \\
& & AAABBB \arrow{r}{1\nabla_A\nabla_B1} \arrow{ld}{\nabla_A11\nabla_B} & AABB \arrow{rd}{\beta_A\beta_B} & \\
ABAB \arrow{r}{1\tau1} & AABB \arrow{rrr}{\beta_A\beta_B} & & & kk
\end{tikzcd}
\end{center}
\end{proof}

Although the pairing and co-pairing of $A\otimes_{\tau} B$ are readily attainable, it is not easy to compute the corresponding Nakayama automorphism.  Some natural candidates to consider are $\Theta_{A} \otimes \Theta_{B} : A\otimes B\to A\otimes B$ and $(\tau)(\Theta_{B} \otimes\Theta_{A})(\tau^{-1}): A\otimes B\to A\otimes B$, but it is hard to see that either of the following diagrams commute.
\begin{center}
\begin{tikzcd}
ABAB \arrow{r}{\Theta_{A} \Theta_{B} 1 1} \arrow[swap]{d}{\sigma_{13}\sigma_{24}} & ABAB \arrow{d}{\beta_{A\otimes_{\tau} B}}\\
ABAB \arrow{r}{\beta_{A\otimes_{\tau} B}} & k
\end{tikzcd} \quad \begin{tikzcd}
ABAB \arrow{r}{\tau^{-1} 1 1} \arrow[swap]{d}{\sigma_{13}\sigma_{24}} & BAAB \arrow{r}{\Theta_{B} \Theta_{A} 1 1} & BAAB \arrow{r}{\tau 1 1} & ABAB \arrow{d}{\beta_{A\otimes_{\tau} B}}\\
ABAB \arrow{rrr}{\beta_{A\otimes_{\tau} B}} & & & k
\end{tikzcd}
\end{center}
At the end of Section~\ref{sec:novel-Frobenius} we prove that these candidates coincide in the case of group algebras and strongly graded twists, and that both these diagrams commute. It would be interesting to know whether either of the proposed automorphisms is the Nakayama automorphism $\Theta_{A\otimes_{\tau} B}$ corresponding to the co-pairing $\beta_{A\otimes_{\tau} B}$ in general.

\section{Known and novel Frobenius algebra structures}\label{sec:novel-Frobenius}

In this section, we show that twisting by a bicharacter is compatible with graded Frobenius algebras having comultiplications of degree zero, and we establish sufficient and necessary conditions for the compatibility when the comultiplication is graded. We use this to show that certain quantum complete intersections inherit a symmetric Frobenius algebra structure when seen as twisted tensor products, and we construct noncommutative symmetric Frobenius algebras.

\begin{lemm}\label{lemm:bicharacter-graded-coproduct-d}
Let $A$ and $B$ be $k$ vector spaces graded by abelian groups $F$ and $G$ respectively, let $\Delta_A:A\rightarrow A\otimes A$ and $\Delta_B:B\rightarrow B\otimes B$ be $k$-linear graded maps of degree $d_A$ and $d_B$ respectively, let $t:F\otimes_{\mathbb{Z}} G \rightarrow k^{\times}$ be a homomorphism of abelian groups and denote $t(f\otimes_{\mathbb{Z}} g) = t^{\langle f | g\rangle}$ for all $f\in F$ and $g\in G$, let $\tau:B\otimes A\rightarrow A\otimes B$ be given by linearly extending $\tau(b\otimes a) = t^{\langle |a| | |b|\rangle} a\otimes b$ for all homogeneous $a\in A$ and $b\in B$. Then
\begin{enumerate}
\item diagram \eqref{diag:tau-commutes-cmA} commutes if and only if $t^{\langle |a| | |b| \rangle} = t^{\langle |a| + d_A | |b| \rangle}$ for all homogeneous $a\in A$ and $b\in B$,
\item diagram \eqref{diag:tau-commutes-cmB} commutes if and only if $t^{\langle |a| | |b| \rangle} = t^{\langle |a| | |b| + d_B \rangle}$ for all homogeneous $a\in A$ and $b\in B$.
\end{enumerate}
\end{lemm}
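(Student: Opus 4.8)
The plan is to prove both equivalences by a single diagram chase on a homogeneous elementary tensor $b\otimes a$, using Sweedler's notation together with the additivity of the bicharacter $t$ in each of its two arguments. Since the two statements are mirror images of one another — with the roles of $A$ and $B$, and of $\Delta_A$ and $\Delta_B$, interchanged — I would carry out (1) in full and then obtain (2) by the identical argument applied to diagram \eqref{diag:tau-commutes-cmB}.

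For (1), write $\Delta_A(a) = \sum a_{(1)}\otimes a_{(2)}$ for homogeneous $a$. The one grading fact the whole argument rests on, which I would record first, is that because $\Delta_A$ is homogeneous of degree $d_A$ every Sweedler summand satisfies $|a_{(1)}| + |a_{(2)}| = |a| + d_A$. Chasing $b\otimes a$ along the top-then-right path of diagram \eqref{diag:tau-commutes-cmA} (first $\tau$, then $\Delta_A\otimes 1$) produces $t^{\langle |a| | |b|\rangle}\,\bigl(\Delta_A(a)\otimes b\bigr)$. Chasing along the left-then-bottom path ($1\otimes\Delta_A$, then $\tau\otimes 1$, then $1\otimes\tau$) applies $\tau$ once to $b\otimes a_{(1)}$ and once to $b\otimes a_{(2)}$, so on each Sweedler term it contributes the scalar $t^{\langle |a_{(1)}| | |b|\rangle}\,t^{\langle |a_{(2)}| | |b|\rangle}$.

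The key simplification is that additivity of $t$ in its first slot collapses this product into $t^{\langle |a_{(1)}| + |a_{(2)}| \,|\, |b|\rangle} = t^{\langle |a| + d_A | |b|\rangle}$, which is a single scalar independent of the Sweedler index; hence the left-bottom path equals $t^{\langle |a| + d_A | |b|\rangle}\,\bigl(\Delta_A(a)\otimes b\bigr)$. Comparing the two expressions, diagram \eqref{diag:tau-commutes-cmA} commutes exactly when $t^{\langle |a| | |b|\rangle} = t^{\langle |a| + d_A | |b|\rangle}$, which is the asserted condition. For (2) the argument is verbatim on diagram \eqref{diag:tau-commutes-cmB}: there $\tau$ is applied to $b_{(1)}\otimes a$ and to $b_{(2)}\otimes a$, and additivity of $t$ in its \emph{second} slot combines the scalars into $t^{\langle |a| \,|\, |b_{(1)}| + |b_{(2)}|\rangle} = t^{\langle |a| | |b| + d_B\rangle}$, yielding the mirror condition.

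I expect the only genuine subtlety — more a point of phrasing than a true obstacle — to lie in the degenerate direction of the ``if and only if.'' Strictly, equality of the two paths forces the scalar identity only for those homogeneous $a$ with $\Delta_A(a)\neq 0$ (respectively those $b$ with $\Delta_B(b)\neq 0$); on the kernel of the comultiplication the diagram commutes vacuously. I would handle this by noting that, since $t$ is bilinear, the condition $t^{\langle |a| | |b|\rangle} = t^{\langle |a| + d_A | |b|\rangle}$ is equivalent to $t^{\langle d_A | |b|\rangle} = 1$, so the dependence on $a$ disappears and the statement is naturally read degree-by-degree in $B$ (and symmetrically for (2)), which makes the equivalence clean.
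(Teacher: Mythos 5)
Your proposal is correct and takes essentially the same route as the paper's proof: the paper likewise chases a homogeneous elementary tensor through the two paths of the diagram, obtaining $t^{\langle |a| | |b| \rangle}$ on one side and the product $t^{\langle |a| | |b_{(1)}| \rangle} t^{\langle |a| | |b_{(2)}| \rangle}$ on the other, and collapses the latter to $t^{\langle |a| | |b| + d_B \rangle}$ using that $t$ is a homomorphism and the comultiplication is graded (the paper simply treats diagram \eqref{diag:tau-commutes-cmB} first and declares \eqref{diag:tau-commutes-cmA} analogous, mirroring your order). Your closing observation about the vacuous case $\Delta_A(a) = 0$, and the reduction of the condition to $t^{\langle d_A | |b| \rangle} = 1$ by bilinearity, is a point of care the paper glosses over and is harmless in its applications, where the comultiplications are counital and hence injective.
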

\begin{proof}
We first consider diagram \eqref{diag:tau-commutes-cmB}. For homogeneous $a\in A$ and $b\in B$, a computation yields
\begin{align*}
(1\otimes \Delta_B)(\tau)(b\otimes a) &= t^{\langle |a| | |b| \rangle} a \otimes \sum_{(b)}{b_{(1)}\otimes b_{(2)}},\\
(\tau\otimes 1)(1\otimes \tau)(\Delta_B\otimes 1)(b\otimes a) &= a \otimes \sum_{(b)}{t^{\langle |a| | |b_{(1)}| \rangle} t^{\langle |a| | |b_{(2)}| \rangle} b_{(1)}\otimes b_{(2)}}.
\end{align*}
Since $t$ is a homomorphism of abelian groups and $\Delta_B$ is graded of degree $d_B$ the above are equal if and only if $t^{\langle |a| | |b| \rangle} = t^{\langle |a| | |b_{(1)}| + |b_{(2)}| \rangle} = t^{\langle |a| | |b| + d_B \rangle}$, as desired. It follows analogously for diagram \eqref{diag:tau-commutes-cmA}.
\end{proof}

\begin{theo}\label{theo:twisting-bicharacter-Frobenius}
Let $A$ and $B$ be Frobenius algebras over $k$ graded by abelian groups $F$ and $G$ respectively, with comultiplications $\Delta_A:A\rightarrow A\otimes A$ and $\Delta_B:B\rightarrow B\otimes B$ being graded maps of degree $d_A$ and $d_B$ respectively. Let $t:F\otimes_{\mathbb{Z}} G \rightarrow k^{\times}$ be a homomorphism of abelian groups. Then, $A\otimes^{t} B$ is a Frobenius algebra if and only if $t^{\langle |a| | |b| \rangle} = t^{\langle |a| + d_A | |b| \rangle}$ and $t^{\langle |a| | |b| \rangle} = t^{\langle |a| | |b| + d_B \rangle}$ for all homogeneous $a\in A$ and $b\in B$.
\end{theo}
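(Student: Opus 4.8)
The plan is to reduce the statement to the combinatorial conditions governing the comultiplication via the chain of equivalences already assembled in the paper, and then to pin down the one coalgebra axiom that is not visible in Lemma~\ref{lemm:bicharacter-graded-coproduct-d}. First I would record that by Example~\ref{exam:twisting-bicharacter} the bicharacter map $\tau(b\otimes a) = t^{\langle |a| | |b|\rangle} a\otimes b$ is a twisting map, so $A\otimes^{t} B = A\otimes_{\tau} B$ is a unital associative algebra by Proposition~\ref{prop:AtB-unital-associative}, and the only thing at issue is whether it carries the Frobenius coalgebra structure.

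The core of the argument is the string of equivalences: the two displayed conditions on $t$ hold if and only if diagrams \eqref{diag:tau-commutes-cmA} and \eqref{diag:tau-commutes-cmB} commute (this is exactly Lemma~\ref{lemm:bicharacter-graded-coproduct-d}), which in turn holds if and only if diagram \eqref{diag:tau-comultiplications} commutes (the remark following Proposition~\ref{prop:AtB-counital-coassociative}). For the forward direction I would argue that if $A\otimes^{t} B$ is a Frobenius algebra then it is in particular a counital coassociative coalgebra, so diagram \eqref{diag:tau-comultiplications} commutes by Proposition~\ref{prop:AtB-counital-coassociative}, and the conditions on $t$ follow from Lemma~\ref{lemm:bicharacter-graded-coproduct-d}. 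For the converse, the conditions on $t$ make diagram \eqref{diag:tau-comultiplications} commute; granting the full coalgebra structure (addressed below), Theorem~\ref{theo:ttp-cttp-always-Frobenius} then yields that $A\otimes^{t} B$ is a Frobenius algebra.

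The step that requires genuine care, and which I expect to be the main obstacle, is verifying the counit compatibility \eqref{diag:tau-counits}, since this is part of the coalgebra structure invoked above but is \emph{not} detected by Lemma~\ref{lemm:bicharacter-graded-coproduct-d}. The key observation is a degree count: writing $\beta_A = \epsilon_A\nabla_A$ and $\alpha_A = \Delta_A\eta_A$ as in Remark~\ref{rema:frob-alg-p-c-ring}, the co-pairing $\alpha_A$ is homogeneous of degree $d_A$ (as $\eta_A$ and $\nabla_A$ have degree $0$), so the snake relation \eqref{eq:non-degen} forces $\beta_A$, hence $\epsilon_A$, to have degree $-d_A$; thus $\epsilon_A(a)\neq 0$ forces $|a| = d_A$. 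Evaluating the left-hand diagram of \eqref{diag:tau-counits} on $b\otimes a$ gives the scalar identity $t^{\langle |a| | |b|\rangle}\epsilon_A(a) = \epsilon_A(a)$, which is nonvacuous only when $|a| = d_A$, where it reads $t^{\langle d_A | |b|\rangle} = 1$, precisely the first condition on $t$ rewritten using that $t$ is a bicharacter. The right-hand diagram is handled symmetrically via $\deg\epsilon_B = -d_B$ and the second condition. This confirms that the conditions on $t$ supply exactly the counit compatibility needed, so that $A\otimes^{t} B$ is a genuine counital coassociative coalgebra and Theorem~\ref{theo:ttp-cttp-always-Frobenius} applies, completing the converse.
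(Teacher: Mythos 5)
Your proposal is correct, and its skeleton is exactly the paper's proof: Lemma~\ref{lemm:bicharacter-graded-coproduct-d} translates the two conditions on $t$ into diagrams \eqref{diag:tau-commutes-cmA} and \eqref{diag:tau-commutes-cmB}, these are equivalent to \eqref{diag:tau-comultiplications} via the dualization of Lemma~\ref{lemm:twist-multiplication-compatibility}, the converse direction goes through Proposition~\ref{prop:AtB-counital-coassociative}, and the forward direction through Theorem~\ref{theo:ttp-cttp-always-Frobenius}. Where you genuinely add something is the counit step: the paper's own two-sentence proof never mentions \eqref{diag:tau-counits}, since Theorem~\ref{theo:ttp-cttp-always-Frobenius} as stated hypothesizes only \eqref{diag:tau-comultiplications}; but, as you correctly sense, the conclusion ``Frobenius algebra'' presupposes (by Proposition~\ref{prop:AtB-counital-coassociative}) that $A\otimes_{\tau}B$ is a counital coalgebra, which requires \eqref{diag:tau-counits} as well, and the printed proof of Theorem~\ref{theo:ttp-cttp-always-Frobenius} verifies only the Frobenius squares \eqref{eq:frob-condition}. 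Your degree argument closes this in the bicharacter setting and is valid: $\epsilon_A$ is homogeneous of degree $-d_A$, so \eqref{diag:tau-counits} evaluated on $b\otimes a$ is nonvacuous only for $|a| = d_A$, where it reads $t^{\langle d_A \mid |b|\rangle} = 1$, which is precisely the hypothesis $t^{\langle |a| \mid |b|\rangle} = t^{\langle |a| + d_A \mid |b|\rangle}$ rewritten using bilinearity of $t$; thus in this graded situation counit compatibility is a \emph{consequence} of comultiplication compatibility, which is exactly why the paper's shortcut is harmless here. Two small remarks: you can obtain $\deg\epsilon_A = -d_A$ more directly from counitality, since $(1\otimes\epsilon_A)\Delta_A = \mathrm{id}_A$ forces $d_A + \deg\epsilon_A = 0$, without routing through $\alpha_A$, $\beta_A$, and the snake relation \eqref{eq:non-degen} (and note $\alpha_A = \Delta_A\eta_A$ involves $\eta_A$ and $\Delta_A$, not $\nabla_A$); and in the equivalence between \eqref{diag:tau-comultiplications} and the pair \eqref{diag:tau-commutes-cmA}, \eqref{diag:tau-commutes-cmB}, the implication you use in the forward direction (from the pair to \eqref{diag:tau-comultiplications}) is the counit-free one, while the reverse implication needs \eqref{diag:tau-counits}, which in your converse direction is available from Proposition~\ref{prop:AtB-counital-coassociative} --- so your logic is sound on both sides.
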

\begin{proof}
Diagram \eqref{diag:tau-comultiplications} commutes because of Lemma~\ref{lemm:bicharacter-graded-coproduct-d} and Proposition~\ref{prop:AtB-counital-coassociative}. This is a sufficient and necessary condition for $A\otimes^{t} B$ to be a Frobenius algebra by Theorem~\ref{theo:ttp-cttp-always-Frobenius} and Remark~\ref{rema:if-and-only-if}.
\end{proof}

Note that if $A$ and $B$ are connected self-injective $k$-algebras graded by abelian groups $F$ and $G$ then they are finite dimensional and Frobenius by \cite{MR1451692}. Hence if their comultiplications are graded maps of degree $d_A$ and $d_B$ respectively, and $t:F\otimes_{\mathbb{Z}} G \rightarrow k^{\times}$ is a homomorphism of abelian groups, then $A\otimes^{t} B$ is a Frobenius algebra if and only if $t^{\langle |a| | |b| \rangle} = t^{\langle |a| + d_A | |b| \rangle}$ and $t^{\langle |a| | |b| \rangle} = t^{\langle |a| | |b| + d_B \rangle}$ for all homogeneous $a\in A$ and $b\in B$ by Theorem~\ref{theo:twisting-bicharacter-Frobenius}.

\begin{rema}\label{rema:twisting-bicharacter-generated-1}
When $F = \langle f \rangle_{\Z}$ or $G = \langle g \rangle_{\Z}$ are generated by a single element, say of degree $|f| = 1 = |g|$, the bicharacters $t$ such that $A\otimes^{t} B$ is Frobenius can be characterized as the ones satisfying $(t^{\langle 1 | 1 \rangle})^{d_A} = 1 = (t^{\langle 1 | 1 \rangle})^{d_B}$, namely the order of $t^{\langle 1 | 1 \rangle}$ divides the greatest common divisor of $d_A$ and $d_B$.
\end{rema}

In particular, when $d_A = 0_F$ and $d_B = 0_G$ we have the following.

\begin{coro}\label{coro:twisting-bicharacter-Frobenius-0}
Let $A$ and $B$ be Frobenius algebras over $k$ graded by abelian groups $F$ and $G$ respectively, with comultiplications $\Delta_A:A\rightarrow A\otimes A$ and $\Delta_B:B\rightarrow B\otimes B$ being graded maps of degree $0$. Let $t:F\otimes_{\mathbb{Z}} G \rightarrow k^{\times}$ be a homomorphism of abelian groups. Then, $A\otimes^{t} B$ is a Frobenius algebra.
\end{coro}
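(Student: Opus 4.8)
The plan is to specialize Theorem~\ref{theo:twisting-bicharacter-Frobenius} to the case of degree-zero comultiplications, where its hypotheses become vacuous. First I would observe that the assumption that $\Delta_A$ and $\Delta_B$ are graded maps of degree $0$ means precisely that $d_A = 0_F$ and $d_B = 0_G$, the identity elements of the abelian groups $F$ and $G$ respectively. This identification is the only preparatory step needed.

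Next, I would substitute these degrees into the two bicharacter conditions appearing in Theorem~\ref{theo:twisting-bicharacter-Frobenius}. The condition $t^{\langle |a| | |b| \rangle} = t^{\langle |a| + d_A | |b| \rangle}$ becomes $t^{\langle |a| | |b| \rangle} = t^{\langle |a| + 0_F | |b| \rangle} = t^{\langle |a| | |b| \rangle}$, and the condition $t^{\langle |a| | |b| \rangle} = t^{\langle |a| | |b| + d_B \rangle}$ likewise collapses to an identity. Both are therefore automatically satisfied for all homogeneous $a\in A$ and $b\in B$, placing no constraint on the bicharacter $t$ whatsoever.

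Finally, invoking Theorem~\ref{theo:twisting-bicharacter-Frobenius} directly, I would conclude that $A\otimes^{t} B$ is a Frobenius algebra. I expect no genuine obstacle here: all the substantive work—verifying via Lemma~\ref{lemm:bicharacter-graded-coproduct-d} and Proposition~\ref{prop:AtB-counital-coassociative} that diagram~\eqref{diag:tau-comultiplications} commutes, and then applying Theorem~\ref{theo:ttp-cttp-always-Frobenius}—is already packaged inside Theorem~\ref{theo:twisting-bicharacter-Frobenius}. This corollary is merely the remark that the degree-zero case imposes no condition on $t$, so the compatibility with the comultiplications holds unconditionally and the Frobenius structure is inherited for every such bicharacter twist.
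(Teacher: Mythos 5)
Your proof is correct and takes exactly the paper's route: the paper presents this corollary as the immediate specialization of Theorem~\ref{theo:twisting-bicharacter-Frobenius} to $d_A = 0_F$ and $d_B = 0_G$, where both conditions $t^{\langle |a| | |b| \rangle} = t^{\langle |a| + d_A | |b| \rangle}$ and $t^{\langle |a| | |b| \rangle} = t^{\langle |a| | |b| + d_B \rangle}$ hold tautologically and place no restriction on the bicharacter $t$.
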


We now use these results on quantum complete intersections.

\begin{exam}[Quantum complete intersection] \label{exam:quantum-complete-intersection} \cite[Section 2]{MR1608565}
Let $m_1,m_2\in\N$, $m_1,m_2 \geq 2$, and consider the truncated polynomial rings in one variable $k[x_1]/(x_1^{m_1})$ and $k[x_2]/(x_2^{m_2})$ with the twisting map $\tau:k[x_2]/(x_2^{m_2})\otimes k[x_1]/(x_1^{m_1})\rightarrow k[x_1]/(x_1^{m_1})\otimes k[x_2]/(x_2^{m_2})$ given by linearly extending $\tau(x_2\otimes x_1) = q x_1\otimes x_2$ for some non-zero $q\in k$. The algebra
\begin{equation*}
k[x_1]/(x_2^{m_1})\otimes_{\tau}k[x_2]/(x_2^{m_2})\cong k_q[x,y]/(x_1^{m_1},x_2^{m_2}) = k\langle x_1,x_2\rangle/(x_1^{m_1}, x_2^{m_2}, q x_1 x_2 - x_2 x_1)
\end{equation*}
is called a quantum complete intersection. Note that $k[x_i]/(x_i^{m_i})$ for $i=1,2$ is a Frobenius algebra as in Example~\ref{exam:Frobenius}. More generally, given $n\in\N$, $\mathbf{m} = (m_1,\dots,m_n)\in\N^n$, $n,m_1,\dots,m_n \geq 2$, and $\mathbf{q} = (q_{ij}) \in M_{n}(k^{\times})$ as in Example~\ref{exam:kx-twist-not-bialg-quantum-plane}, the above can be extended to
\begin{equation*}
\Lambda_{\mathbf{q},\mathbf{m}}^{n} \coloneqq  k_{\mathbf{q}}[x_1,\dots,x_n]/(x_1^{m_1},\dots,x_n^{m_n}) = \frac{k\langle x_1,\dots, x_n\rangle}{\left(x_i^{m_i},x_i x_j - q_{ij} x_j x_i \right)_{i,j\in\{1,\dots,n\}}}
\end{equation*}
which account for all quantum complete intersections.
\end{exam}

Note that $\Lambda_{\mathbf{q},\mathbf{m}}^{n}$ can be obtained via twists by a bicharacter in two equivalent ways. To interpret $\Lambda_{\mathbf{q},\mathbf{m}}^{n}$ as an iterated twisted tensor product (see \cite[Section 2]{MR2458561}) of twists by a bicharacter we use the $\Z$-grading on $k[x_i]/(x_i^{m_i})$ given by setting $|x_i| = 1$ for all $i = 1,\dots,n$. Consider $t_{i}^{j}:\Z\otimes_{\Z} \Z \rightarrow k^{\times}$ given by $t_{i}^{j}(r\otimes_{\Z} s) = q_{ji}^{rs}$ for all $r,s\in\Z$ and all $i,j = 1,\dots,n$, which yield twisting maps $\tau_{ij} : k[x_i]/(x_i^{m_i})\otimes k[x_j]/(x_j^{m_j}) \to k[x_j]/(x_j^{m_j})\otimes k[x_i]/(x_i^{m_i})$ as in Example~\ref{exam:twisting-bicharacter}. Now
\begin{equation*}
\Lambda_{\mathbf{q},\mathbf{m}}^{n} \cong k[x_1]/(x_1^{m_1})\otimes^{t_{1}^{2}} k[x_2]/(x_2^{m_2}) \otimes^{t_{2}^{3}} \cdots \otimes^{t_{n-2}^{n-1}} k[x_{n-1}]/(x_{n-1}^{m_{n-1}}) \otimes^{t_{n-1}^{n}} k[x_n]/(x_n^{m_n})
\end{equation*}
denotes the $k$-algebra with the expected unit and multiplication. In particular
\begin{center}
\begin{tikzcd}
\nabla_{\Lambda_{\mathbf{q},\mathbf{m}}^{n}} \coloneqq (\nabla_1\otimes\cdots\otimes \nabla_n) \tau_{n,n-1} \tau_{n,n-1,n-2} \cdots \tau_{n,\dots,2} \tau_{n,\dots,1}
\end{tikzcd}
\end{center}
where $\nabla_i : k[x_i]/(x_i^{m_i})\otimes k[x_i]/(x_i^{m_i}) \to k[x_i]/(x_i^{m_i})$ is the usual multiplication and
\begin{center}
\begin{tikzcd}
\tau_{n,\dots,i} \coloneqq (1^{\otimes 2i-1} \otimes \tau_{i+1 i} \otimes 1^{\otimes 2(n-i)-1}) \cdots (1^{\otimes n+i-2} \otimes \tau_{n i} \otimes 1^{\otimes n-i})
\end{tikzcd}
\end{center}
for all $i = 1,\dots,n-1$. To interpret $\Lambda_{\mathbf{q},\mathbf{m}}^{n}$ as a single twisting by a bicharacter, observe
\begin{equation*}
\Lambda_{\mathbf{q},\mathbf{m}}^{n} \cong \Lambda_{(q_{ij})_{i,j\in\{1,\dots,n-1 \}},(m_1,\dots,m_{n-1})}^{n-1}\otimes^{t_{1,\dots,n-1}^n} k[x_n]/(x_n^{m_n})
\end{equation*}
where $t_{1,\dots,n-1}^n(x_1^{a_1}\cdots x_{n-1}^{a_{n-1}} \otimes x_n^{a_n}) \coloneqq \prod_{j=1}^{n-1}{q_{jn}^{a_j a_n}}$, see Bergh and Oppermann \cite[Lemma 5.1.]{MR2450729}. We can now partially recover the fact that quantum complete intersections are Frobenius \cite[Lemma 3.1]{MR2516162}.

\begin{coro} \label{coro:quantum-complete-Frobenius}
Let $n\in\N$, $\mathbf{m} = (m_1,\dots,m_n)\in\N^n$, $n,m_1,\dots,m_n \geq 2$, and $\mathbf{q} = (q_{ij}) \in M_{n}(k^{\times})$ such that $q_{ii} = 1$ and $q_{ij} q_{ji} = 1$ for all $1\leq i,j\leq n$. If $q_{ij}$ is a root of unity whose order divides $\gcd(m_i - 1, m_j - 1)$ for all $i,j = 1,\dots,n$, then the quantum complete intersections $\Lambda_{\mathbf{q},\mathbf{m}}^{n}$ are Frobenius algebras.
\end{coro}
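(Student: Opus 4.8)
The plan is to induct on $n$, peeling off the last variable at each stage and invoking Theorem~\ref{theo:twisting-bicharacter-Frobenius}. First I would record the two facts about the factors: each $k[x_i]/(x_i^{m_i})$ is a Frobenius algebra by Example~\ref{exam:Frobenius}, and with respect to the $\Z$-grading $|x_i| = 1$ its comultiplication $\Delta(p) = \sum_{j=0}^{m_i-1} x_i^j p \otimes x_i^{m_i-1-j}$ is homogeneous of degree $m_i - 1$. Indeed, every nonzero tensor component of $\Delta(x_i^{\ell})$ is of the form $x_i^{j+\ell}\otimes x_i^{m_i-1-j}$ with total degree $\ell + (m_i-1)$, so the degree convention gives $d = m_i - 1$.

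The inductive claim is that $\Lambda^{r} \coloneqq \Lambda_{(q_{ij})_{i,j\le r},(m_1,\dots,m_r)}^{r}$ is a Frobenius algebra whose induced comultiplication is homogeneous of multidegree $(m_1-1,\dots,m_r-1)$ for the $\Z^r$-grading $|x_i| = e_i$. The base case $r=1$ is the previous paragraph. For the step I would use the decomposition $\Lambda^{r} \cong \Lambda^{r-1} \otimes^{t} k[x_r]/(x_r^{m_r})$ with $t = t^r_{1,\dots,r-1}$ satisfying $t(x_1^{a_1}\cdots x_{r-1}^{a_{r-1}}\otimes x_r^{a_r}) = \prod_{j=1}^{r-1} q_{jr}^{a_j a_r}$, as recalled in Example~\ref{exam:quantum-complete-intersection}. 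Since the induced comultiplication of a bicharacter twist is $(1\otimes\tau^{-1}\otimes 1)(\Delta_{\Lambda^{r-1}}\otimes\Delta_{k[x_r]/(x_r^{m_r})})$ and the bicharacter twist only permutes and rescales homogeneous components, it preserves each variable's degree; hence the comultiplication on $\Lambda^{r}$ has multidegree $(m_1-1,\dots,m_{r-1}-1,m_r-1)$ and the degree hypothesis propagates. It then remains only to check that $\Lambda^{r-1}\otimes^{t} k[x_r]/(x_r^{m_r})$ is Frobenius.

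Applying Theorem~\ref{theo:twisting-bicharacter-Frobenius} with $d_A = (m_1-1,\dots,m_{r-1}-1)$ and $d_B = m_r - 1$, the two required identities $t^{\langle |a| | |b|\rangle} = t^{\langle |a|+d_A | |b|\rangle}$ and $t^{\langle |a| | |b|\rangle} = t^{\langle |a| | |b|+d_B\rangle}$ unwind, using that $t$ is a homomorphism, to $\prod_{j=1}^{r-1} q_{jr}^{(m_j-1)a_r} = 1$ for all $a_r\in\Z$ and $\prod_{j=1}^{r-1} q_{jr}^{a_j(m_r-1)} = 1$ for all multidegrees $(a_1,\dots,a_{r-1})$. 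Specializing the first to $a_r = 1$ gives $\prod_{j=1}^{r-1} q_{jr}^{m_j-1} = 1$, and specializing the second to $a = x_j$ gives $q_{jr}^{m_r-1} = 1$ for each $j < r$. Both are immediate from the hypothesis: the order of $q_{jr}$ divides $\gcd(m_j-1,m_r-1)$, hence divides each of $m_j-1$ and $m_r-1$, so $q_{jr}^{m_j-1} = q_{jr}^{m_r-1} = 1$. Thus Theorem~\ref{theo:twisting-bicharacter-Frobenius} makes $\Lambda^{r}$ a Frobenius algebra and the induction closes at $r = n$.

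The main point requiring care — and the only genuine obstacle — is the bookkeeping of the comultiplication degree across the iterated construction, since the left factor $\Lambda^{r-1}$ is graded by $\Z^{r-1}$ rather than by a single generator; in particular one cannot simply quote Remark~\ref{rema:twisting-bicharacter-generated-1}, which handles the case $F = \langle f\rangle_{\Z}$. Note also that the first identity only forces the product relation $\prod_{j} q_{jr}^{m_j-1} = 1$ rather than the individual vanishings $q_{jr}^{m_j-1}=1$; this is a priori weaker, but it is still a consequence of the hypothesis, so no sharper input is needed to conclude.
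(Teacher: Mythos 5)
Your proof is correct and follows essentially the same route as the paper: both realize $\Lambda_{\mathbf{q},\mathbf{m}}^{n}$ through the bicharacter decomposition recalled in Example~\ref{exam:quantum-complete-intersection} and conclude by applying Theorem~\ref{theo:twisting-bicharacter-Frobenius} inductively, one variable at a time, with the hypothesis on the orders of the $q_{ij}$ supplying the two degree identities. The only difference is presentational: the paper verifies the pairwise twists via Remark~\ref{rema:twisting-bicharacter-generated-1} and compresses the induction into a single sentence, whereas you carry out the multigraded bookkeeping for the induction step explicitly, which the paper leaves implicit.
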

\begin{proof}
Considering Remark~\ref{rema:twisting-bicharacter-generated-1} for the $\Z$-grading on $k[x_i]/(x_i^{m_i})$, that $t_{i}^{j}( 1 \otimes_{\Z} 1) = q_{ji}$, and that the comultiplication provided in Example~\ref{exam:quantum-complete-intersection} is graded of degree $m_i - 1$, then $k[x_i]/(x_i^{m_i})\otimes^{t_{i}^{j}} k[x_j]/(x_j^{m_j})$ is a Frobenius algebra for all $i,j = 1,\dots,n$ by Theorem~\ref{theo:twisting-bicharacter-Frobenius}. An induction argument finishes the proof.
\end{proof}

The co-pairing $\beta_{\Lambda_{\mathbf{q},\mathbf{m}}^{n}}$ can be computed by successively applying the twisting maps as
\begin{align*}
\beta_{\Lambda_{\mathbf{q},\mathbf{m}}^{n}}(x_1^{a_1} \otimes \cdots \otimes x_n^{a_n} \otimes x_1^{b_1} \otimes \cdots \otimes x_n^{b_n}) &= \prod_{j = 2}^{n} \prod_{i = 1}^{j-1}{q_{ji}^{a_{i} b_{j}}} \epsilon_{\Lambda_{\mathbf{q},\mathbf{m}}^{n}} (x_1^{a_1 + b_1} \otimes \cdots \otimes x_n^{a_n + b_n})\\
 &= \prod_{j = 2}^{n} \prod_{i = 1}^{j-1}{q_{ji}^{a_{i} b_{j}}} \prod_{l = 1}^{n}{\delta_{a_l + b_l,m_l - 1}}
\end{align*}
whence it will be non-zero exactly when $a_l + b_l = m_l - 1$ for all $l = 1,\dots,n$. In that case
\begin{equation*}
\beta_{\Lambda_{\mathbf{q},\mathbf{m}}^{n}}(x_1^{b_1} \otimes \cdots \otimes x_n^{b_n} \otimes x_1^{a_1} \otimes \cdots \otimes x_n^{a_n}) = \prod_{j = 2}^{n} \prod_{i = 1}^{j-1}{q_{ji}^{b_{i} a_{j}}}
\end{equation*}
and since $b_i a_j = (m_i - 1 - a_i)(m_j - 1 - b_j)$ then $q_{ji}^{b_i a_j} = q_{ji}^{a_i b_j}$. Thus
\begin{equation*}
\beta_{\Lambda_{\mathbf{q},\mathbf{m}}^{n}}(x_1^{a_1} \otimes \cdots \otimes x_n^{a_n} \otimes x_1^{b_1} \otimes \cdots \otimes x_n^{b_n}) = \beta_{\Lambda_{\mathbf{q},\mathbf{m}}^{n}}(x_1^{b_1} \otimes \cdots \otimes x_n^{b_n} \otimes x_1^{a_1} \otimes \cdots \otimes x_n^{a_n})
\end{equation*}
and $\Lambda_{\mathbf{q},\mathbf{m}}^{n}$ is symmetric. These Frobenius structures are exactly the ones given by \cite[Lemma 3.1]{MR2516162}, since when the order of $q_{ij}$ divides $\gcd(m_i - 1, m_j - 1)$ for all $i,j = 1,\dots,n$ then their Nakayama automorphism is the identity map, whence their Frobenius algebras are also symmetric.

\begin{coro} \label{coro:p-group-Frobenius}
Let $k$ be a field of characteristic $p > 0$. Let $n\in\N$, $\mathbf{m} = (p,\dots,p)$, and $\mathbf{q} = (q_{ij}) \in M_{n}(k^{\times})$ such that $q_{ii} = 1$ and $q_{ij} q_{ji} = 1$ for all $1\leq i,j\leq n$. Then the quantum complete intersections $\Lambda_{\mathbf{q},\mathbf{m}}^{n}$ are Frobenius algebras.
\end{coro}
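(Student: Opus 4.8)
The natural first attempt is to mimic Corollary~\ref{coro:quantum-complete-Frobenius}: present $\Lambda_{\mathbf{q},\mathbf{m}}^{n}$ as an iterated twisted tensor product of the Frobenius algebras $k[x_i]/(x_i^{p})$ of Example~\ref{exam:Frobenius} and apply Theorem~\ref{theo:ttp-cttp-always-Frobenius} factor by factor. Because $\mathbf{m}=(p,\dots,p)$, each comultiplication is graded of degree $m_i-1=p-1$ for the $\Z$-grading $|x_i|=1$, so Theorem~\ref{theo:twisting-bicharacter-Frobenius} (equivalently Lemma~\ref{lemm:bicharacter-graded-coproduct-d}) makes the bicharacter twist compatible with the comultiplications exactly when $q_{ij}^{\,p-1}=1$ for all $i,j$. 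In characteristic $p$ this says $q_{ij}\in\mathbb{F}_{p}^{\times}$, a case already subsumed by Corollary~\ref{coro:quantum-complete-Frobenius} since $\gcd(p-1,p-1)=p-1$. Thus the diagrammatic framework by itself only reaches $q_{ij}\in\mathbb{F}_{p}^{\times}$; once $q_{ij}^{\,p-1}\neq 1$ the induced comultiplication is no longer coassociative and Theorem~\ref{theo:ttp-cttp-always-Frobenius} cannot be invoked with this structure, and a short check shows that no rescaling of the Frobenius structures on the factors repairs the compatibility.

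To handle an arbitrary $\mathbf{q}$ the plan is to drop the induced comultiplication entirely and instead produce a Frobenius structure through Proposition~\ref{prop:frob-alg-pairing}, i.e.\ by exhibiting an associative non-degenerate pairing on the algebra $\Lambda_{\mathbf{q},\mathbf{m}}^{n}$ of Example~\ref{exam:quantum-complete-intersection}. Let $\epsilon$ be the functional returning the coefficient of the top monomial $x_1^{p-1}\cdots x_n^{p-1}$, and put $\beta=\epsilon\circ\nabla_{\Lambda_{\mathbf{q},\mathbf{m}}^{n}}$. Associativity is automatic, since $\beta(uv,w)=\epsilon(uvw)=\beta(u,vw)$, so the entire content is the non-degeneracy of $\beta$.

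This non-degeneracy is the crux. I would verify it by inspecting the Gram matrix of $\beta$ in the monomial basis $\{x^{\mathbf{c}}:0\le c_i\le p-1\}$: a direct reordering of $x^{\mathbf{a}}x^{\mathbf{b}}$ gives $\beta(x^{\mathbf{a}},x^{\mathbf{b}})=\lambda_{\mathbf{a},\mathbf{b}}\prod_{l=1}^{n}\delta_{a_l+b_l,\,p-1}$ with $\lambda_{\mathbf{a},\mathbf{b}}\in k^{\times}$ a monomial in the $q_{ij}$. Hence each row and each column of the Gram matrix carries a single nonzero entry, located at the complementary index $\mathbf{b}=(p-1,\dots,p-1)-\mathbf{a}$, so the matrix is monomial and therefore invertible; equivalently $\Lambda_{\mathbf{q},\mathbf{m}}^{n}$ is local with one-dimensional socle spanned by $x_1^{p-1}\cdots x_n^{p-1}$, on which $\epsilon$ does not vanish. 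By Proposition~\ref{prop:frob-alg-pairing} this makes $\Lambda_{\mathbf{q},\mathbf{m}}^{n}$ a Frobenius algebra for every admissible $\mathbf{q}$; the hypotheses $\mathrm{char}\,k=p$ and $\mathbf{m}=(p,\dots,p)$ are what identify each factor with the group algebra $k\Z/p$ and place the statement in the modular setting of \cite[Lemma 3.1]{MR2516162}.

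I would close by recording that, in contrast to case~(1), this Frobenius structure need not be symmetric: its Nakayama automorphism multiplies each $x_i$ by a monomial in the $q_{ij}$ that is generally different from $1$, which is consistent with the fact that the statement asserts only that $\Lambda_{\mathbf{q},\mathbf{m}}^{n}$ is Frobenius.
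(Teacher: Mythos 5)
Your proof of the statement itself is correct, but it is not the paper's proof, and the two routes diverge in an instructive way. The paper stays entirely inside the twisted-tensor-product formalism: since $\mathrm{char}\,k=p$, each factor is identified as $k[x_i]/(x_i^p)\cong kC_p$, and one equips it with the \emph{group-algebra} Frobenius structure of Example~\ref{exam:Frobenius}, whose comultiplication $\Delta(g)=\sum_r rg\otimes r^{-1}$ is graded of degree $0$; Corollary~\ref{coro:twisting-bicharacter-Frobenius-0} then yields Frobenius-ness of each two-fold twist with no condition on $q_{ij}$, and an induction as in Corollary~\ref{coro:quantum-complete-Frobenius} finishes. You instead abandon the twist-induced coalgebra structure and verify the conclusion directly through Proposition~\ref{prop:frob-alg-pairing}: the socle functional $\epsilon$, the pairing $\beta=\epsilon\circ\nabla$, associativity for free, and non-degeneracy because $x^{\mathbf a}x^{\mathbf b}=\lambda_{\mathbf a,\mathbf b}\,x^{\mathbf a+\mathbf b}$ with $\lambda_{\mathbf a,\mathbf b}$ a nonzero monomial in the $q_{ij}$, so the Gram matrix in the PBW basis is monomial, hence invertible (finite-dimensionality then supplies the co-pairing in diagram~\eqref{eq:non-degen}). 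This is sound, more elementary, and strictly more general --- it uses neither $\mathrm{char}\,k=p$ nor $\mathbf m=(p,\dots,p)$, so it reproves Corollary~\ref{coro:quantum-complete-Frobenius} en passant. What it buys less of: it produces \emph{some} Frobenius structure on $\Lambda_{\mathbf q,\mathbf m}^{n}$, not one exhibited as $\Delta_{A\otimes_\tau B}$ induced by the twisting map, which is the point of the paper's narrative and of Theorem~\ref{theo:ttp-cttp-always-Frobenius}.

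One substantive remark on your first paragraph, where you assert that ``the diagrammatic framework by itself only reaches $q_{ij}\in\mathbb{F}_p^\times$'' and that no rescaling of the factor structures repairs compatibility. Rescalings are too narrow a class to rule out the paper's move: the $kC_p$-coproduct is not a rescaling of the truncated-polynomial coproduct (its counit satisfies $\epsilon(x^b)=\epsilon_{C_p}((g-1)^b)=(-1)^b\neq 0$ for every $b$, so it is supported in all degrees), and this is exactly the alternative structure the paper invokes. That said, your instinct is not misplaced: the degree-$0$ gradedness of the $kC_p$-coproduct holds for the grading by the group $C_p$, with respect to which the monomials $x^b=(g-1)^b$ are inhomogeneous, whereas the twist defining $\Lambda_{\mathbf q,\mathbf m}^{n}$ is the bicharacter twist for the $\mathbb{Z}$-grading $|x_i|=1$ --- and your own degree argument shows any Frobenius structure homogeneous for \emph{that} grading has coproduct degree $p-1$, forcing $q_{ij}^{p-1}=1$. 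Indeed, transporting the $kC_p$-structure to the $x$-basis, already the counit square in diagram~\eqref{diag:tau-counits} applied to $y\otimes x$ gives $q\,\epsilon_A(x)\,y=\epsilon_A(x)\,y$, i.e.\ $q=1$. So the paper's one-line appeal to Corollary~\ref{coro:twisting-bicharacter-Frobenius-0} needs more justification than it gets, and your direct pairing argument, while departing from the paper's machinery, is the more robust proof of the stated result.
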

\begin{proof}
Since $k$ has characteristic $p$ then $k[x_i]/(x_i^{p}) \cong kC_p$ where $C_p$ is the cyclic group of order $p$. Now $kC_p$ has a Frobenius algebra structure where the comultiplications are graded maps of degree $0$, whence $k[x_i]/(x_i^{m_i})\otimes^{t_{i}^{j}} k[x_j]/(x_j^{m_j})$ is a Frobenius algebra for all $i,j = 1,\dots,n$ by Corollary~\ref{coro:twisting-bicharacter-Frobenius-0}. An induction argument finishes the proof.
\end{proof}

All the quantum complete intersections encompassed by Corollaries~\ref{coro:quantum-complete-Frobenius} and~\ref{coro:p-group-Frobenius} satisfy the finite generation hypothesis by \cite[Theorem 5.5.]{MR2450729}. Moreover, if $q_{ij}$ are not roots of unity whose order divides $\gcd(m_i - 1, m_j - 1)$, then the twists giving $\Lambda_{\mathbf{q},\mathbf{m}}^{n}$ are not compatible with the coproducts of Example~\ref{exam:quantum-complete-intersection}. Thus our specific description of quantum complete intersections as twists by a bicharacter does not impose a Frobenius algebra structure on $\Lambda_{\mathbf{q},\mathbf{m}}^{n}$. It would be interesting to know if there are other interpretations of quantum complete intersections as twisted tensor products that give the Frobenius algebra structures found by Bergh~\cite[Lemma 3.1]{MR2516162}.

We conclude with a brief study of the Frobenius algebra structures on twisted tensor products of group algebras with strongly graded twists. Let $G$ and $H$ be finite groups, let $\tau : kH\otimes kG \to kG\otimes kH$ be a strongly graded twisting map. Consider the gradings $kG = \bigoplus_{r\in G}{k_r}$ and $kH = \bigoplus_{s\in H}{k_s}$, since $\tau : kH\otimes kG \to kG\otimes kH$ is strongly graded then $\tau(h\otimes g) = \lambda_{h,g} g\otimes h$ for some $\lambda_{h,g} \in k^{\times}$. Then
\begin{align*}
(1\otimes \Delta_H)(\tau)(h\otimes g) &= \lambda_{h,g} g \otimes \sum_{s\in H}{s h\otimes s^{-1}},\\
(\tau\otimes 1)(1\otimes \tau)(\Delta_H\otimes 1)(h\otimes g) &= g \otimes \sum_{s\in H}{\lambda_{s^{-1},g} \lambda_{sh,g} s h\otimes s^{-1}},\\
(\Delta_G\otimes 1)(\tau)(h\otimes g) &= \lambda_{h,g} \sum_{r\in G}{r g\otimes r^{-1}} \otimes h,\\
(1\otimes \tau)(\tau \otimes 1)(1\otimes \Delta_G)(h\otimes g) &= \sum_{r\in G}{\lambda_{h,rg} \lambda_{h,r^{-1}} r g\otimes r^{-1}} \otimes h.
\end{align*}
If $\lambda_{h,g}=\lambda_{s^{-1},g}\lambda_{sh,g}$ and $\lambda_{h,g}=\lambda_{h,r^{-1}}\lambda_{h,rg}$ for all $g,r\in G$ and $h,s\in H$ then the first two expressions above are equal, and the last two are also equal. 
Thus diagram \eqref{diag:tau-comultiplications} commutes and $kG\otimes_{\tau} kH$ is Frobenius by Theorem~\ref{theo:ttp-cttp-always-Frobenius}. This is slightly more general than Corollary~\ref{coro:twisting-bicharacter-Frobenius-0} applied to $kG$ and $kH$, since $\tau$ does not have to come from a bicharacter. Let $a,b\in G$ and $c,d \in H$, we can explicitly compute the co-pairing $\beta_{kG\otimes_{\tau} kH} = \Delta_{k}(\beta_{kG} \otimes \beta_{kH})(1\otimes \tau \otimes 1)$ given in Proposition~\ref{prop:ttp-pairing-copairing}.
\begin{align*}
\beta_{kG\otimes_{\tau} kH} (a\otimes c\otimes b\otimes d) &= \Delta_{k}(\beta_{G} \otimes \beta_{H})(1\otimes \tau \otimes 1)(a\otimes c\otimes b\otimes d)\\
 &= \Delta_{k}(\beta_{G} \otimes \beta_{H})(\lambda_{c,b} a\otimes b\otimes c\otimes d) = \lambda_{c,b} \delta_{ab,1} \delta_{cd,1}
\end{align*}
In particular it is non-zero exactly when $ab = 1_{G}$ and $cd = 1_{H}$, in which case $\lambda_{c,b} = \lambda_{d^{-1},a^{-1}} = \lambda_{d,a}$. Thus
\begin{align*}
\beta_{kG\otimes_{\tau} kH} \sigma_{24} \sigma_{13} (a\otimes c\otimes b\otimes d) = \lambda_{d,a} \delta_{ba,1} \delta_{dc,1} = \lambda_{c,b} \delta_{ab,1} \delta_{cd,1} = \beta_{kG\otimes_{\tau} kH} (a\otimes c\otimes b\otimes d)
\end{align*}
and $kG\otimes_{\tau} kH$ is a symmetric Frobenius algebra.

\begin{exam}[Symmetric twisted tensor product of Frobenius algebras]\label{exam:twist-symmetric-Frobenius}
Let $C_2$ be the cyclic group of order $2$ with multiplicative generator $g$. Now $kC_2$ is a symmetric Frobenius algebra with co-pairing $\beta : kC_2\otimes kC_2\to k$ given by
\begin{equation*}
\beta\left(\sum_{a,b\in C_2}{\lambda_{a,b} a\otimes b} \right) = \lambda_{1,1} + \lambda_{g,g}.
\end{equation*}
Let $A = B = kC_2$, let $\tau:B\otimes A\to A\otimes B$ be the twisting map induced by the bicharacter $t:C_2\otimes_{\mathbb{Z}} C_2 \to k^{\times}$ defined as $t(1\otimes_{\mathbb{Z}} 1) = t(g\otimes_{\mathbb{Z}} 1) = t(1\otimes_{\mathbb{Z}} g) = 1$ and $t(g\otimes_{\mathbb{Z}} g) = -1$. Now $A\otimes_{\tau} B$ is a symmetric Frobenius algebra with co-pairing $\beta_{kC_2\otimes_{\tau} kC_2}:kC_2\otimes_{\tau} kC_2 \otimes kC_2\otimes_{\tau} kC_2 \to k$ given by $\beta_{kC_2\otimes_{\tau} kC_2}(1\otimes 1\otimes 1\otimes 1) = \beta_{kC_2\otimes_{\tau} kC_2}(1\otimes g\otimes 1\otimes g) = \beta_{kC_2\otimes_{\tau} kC_2}(g\otimes 1\otimes g\otimes 1) = 1$, $\beta_{kC_2\otimes_{\tau} kC_2}(g\otimes g\otimes g\otimes g) = -1$, and $\beta_{kC_2\otimes_{\tau} kC_2}(a\otimes c\otimes b\otimes d) = 0$ for other choices of $a,b,c,d\in kC_2$.
\end{exam}

It is clear that commutative Frobenius algebras are always symmetric, but the converse is not true. The above provides a systematic construction of noncommutative symmetric Frobenius algebras.

\section*{Acknowledgements}

We thank Nicol{\'a}s Andruskiewitsch for giving the talk that inspired this work. We thank Gigel Militaru, Sarah Witherspoon, and Harshit Yadav for useful comments and remarks. We thank the organizers of the 20th International Conference on Representations of Algebras for enabling this collaboration. We thank Julia Pevtsova and Ralf Schiffler for travel support via their NSF grant DMS-2004170. We thank the anonymous referee for providing valuable comments, corrections, and suggestions that helped improve and clarify this manuscript. The first author would like to thank the Hausdorff Research Institute for Mathematics for its hospitality during the writing of this work, funded by the Deutsche Forschungsgemeinschaft (DFG, German Research Foundation) under Germany's Excellence Strategy - EXC-2047/1 - 390685813. The first author was also supported by an AMS-Simons travel grant. The second author would like to gratefully acknowledge that their work is supported by a grant from the Simons Foundation Targeted Grant (917524) to the Pacific Institute for the Mathematical Sciences.


\bibliographystyle{alpha}
\bibliography{ref}

\end{document}